\title[Harmonic metrics, Hull-Strominger system, stability]{Harmonic metrics for the Hull-Strominger system and stability}
\author[M. Garcia-Fernandez]{Mario Garcia-Fernandez}
\address{Universidad Aut\'onoma de Madrid and Instituto de Ciencias Matem\'aticas (CSIC-UAM-UC3M-UCM)\\ Ciudad
	Universitaria de Cantoblanco\\ 28049 Madrid, Spain}
\email{mario.garcia@icmat.es}
\author[R. Gonzalez Molina]{Raul Gonzalez Molina}
\address{Instituto de Ciencias Matem\'aticas (CSIC-UAM-UC3M-UCM)\\ Nicol\'as Cabrera 13--15, Cantoblanco\\ 28049 Madrid, Spain}
  \email{raul.gonzalez@icmat.es}
\thanks{This work is partially supported by the Spanish Ministry of Science and Innovation, under grants PID2019-109339GA-C32, EUR2020-112265, and I+D+i SEV2015-05554-18-1 funded by MCIN/AEI/10.13039/501100011033}
\theoremstyle{plain}
\newtheorem{theorem}{Theorem}[section]
\newtheorem{lemma}[theorem]{Lemma}
\newtheorem{corollary}[theorem]{Corollary}
\newtheorem{proposition}[theorem]{Proposition}
\newtheorem{question}[theorem]{Question}
\theoremstyle{definition}
\newtheorem{definition}[theorem]{Definition}
\newtheorem{definition-theorem}[theorem]{Definition-Theorem}
\theoremstyle{remark}
\newtheorem{remark}[theorem]{Remark}
\numberwithin{equation}{section} \setcounter{tocdepth}{1}
\newcommand{\tr}{\operatorname{tr}}
\newcommand{\Id}{\operatorname{Id}}
\newcommand{\End}{\operatorname{End}}
\newcommand{\Hom}{\operatorname{Hom}}
\newcommand{\ad}{\operatorname{ad}}
\newcommand{\dbar}{\bar{\partial}}
\newcommand{\CC}{{\mathbb C}}
\newcommand{\RR}{{\mathbb R}}
\newcommand{\rk}{\operatorname{rk}}
\newcommand{\surj}{\to\kern-1.8ex\to}
\newcommand{\cA}{\mathcal{A}}
\newcommand{\cG}{\mathcal{G}}
\newcommand{\cL}{\mathcal{L}}
\newcommand{\Lie}{\operatorname{Lie}}
\newcommand{\IP}[1]{\left<#1\right>}
\begin{document}

\maketitle

\begin{center}
{\small \emph{Dedicated to Oscar Garc\'ia-Prada on the occasion of his sixtieth birthday.}}
\end{center}

\begin{abstract}%
 
We investigate stability conditions related to the existence of solutions of the Hull-Strominger system with prescribed balanced class. We build on recent work by the authors, where the Hull-Strominger system is recasted using 
non-Hermitian Yang-Mills connections and holomorphic Courant algebroids. Our main development is a notion of \emph{harmonic metric} for the Hull-Strominger system, motivated by an infinite-dimensional hyperK\"ahler moment map and related to a numerical stability condition, which we expect to exist generically for families of solutions. We illustrate our theory with an infinite number of continuous families of examples on the Iwasawa manifold.

\end{abstract}


\section{Introduction}\label{sec:intro}

The Hull-Strominger system of partial differential equations \cite{HullTurin,Strom} was proposed in \cite{FuYau,LiYau} as a geometrization tool for understanding the moduli space of Calabi-Yau threefolds with topology change (also known as `Reid's Fantasy' \cite{Reid}). The basic geometric data underlying a solution is given by a Calabi-Yau manifold $(X,\Omega)$, possibly non-K\"ahler, and a holomorphic vector bundle $V$ over it satisfying a suitable topological constraint. The degrees of freedom of the system are a conformally balanced hermitian form $\omega$ on $X$, a hermitian metric on $V$, and a connection $\nabla$ on the smooth complex tangent bundle $T^{1,0}$ of $X$. The interest in this geometrization programme has motivated the study of conifold transitions and balanced metrics on non-K\"ahler manifolds \cite{CoPiYau,CoPiGuYau,FuLiYau,Phong}. Nonetheless, recent work by the authors in \cite{GFGM} shows that the initial proposal by Fu, Li, and Yau may possibly need to be refined, due to the existence of obstructions which go beyond the balanced property of $X$ and the Mumford-Takemoto slope stability of $V$.

These new obstructions, which take the form of \emph{Futaki invariants}, are inspired by the recent moment map constructions in \cite{Waldram,GaRuTi3}. The picture suggested by \cite{GaRuTi3} relates the moduli space of solutions of the Hull-Strominger system with a moduli space of \emph{holomorphic string algebroids}, a special class of holomorphic Courant algebroids given by holomorphic extensions 
\begin{equation*}
\begin{split}
0 \longrightarrow T^*_{1,0} \overset{\pi^*}{\longrightarrow} \mathcal{Q} \longrightarrow A \longrightarrow 0,
\end{split}
\end{equation*}
for suitable Atiyah Lie algebroids $A$, endowed with a holomorphic pairing $\IP{,}$ and a bracket $[,]$ on its sheaf of sections. Based on the general principles of the Donaldson-Uhlenbeck-Yau Theorem \cite{Don,UYau}, it is natural to expect that the holomorphic objects $(\mathcal{Q},\IP{,},[,])$ which are parametrized by this moduli space satisfy a stability condition in the sense Geometric Invariant Theory.

Motivated by this picture, in this work we investigate stability conditions related to the existence of solutions of the Hull-Strominger system with prescribed balanced class of the solution in the Bott-Chern cohomology of $X$
$$
\mathfrak{b} = [\|\Omega\|_\omega \omega^{n-1}] \in H^{n-1,n-1}_{BC}(X,\RR).
$$
Even though the system was originally considered in complex dimension three, we shall work here in arbitrary complex dimension $n$ (see Section \ref{sec:HS-HE}). We start by studying the relation between the existence of non-K\"ahler solutions and the Mumford-Takemoto slope polystability of the holomorphic orthogonal bundle $(\mathcal{Q},\IP{,})$ with respect to $\mathfrak{b}$, showing in Proposition \ref{prop:HSnogo} that these two conditions are not compatible in general. In particular, we recover a no-go result for solutions `without the $\nabla$ connection', which goes back to the seminal work of Candelas, Horowitz, Strominger, and Witten \cite{CHSW}. From our point of view, this is a consequence of the slope stability of $(\mathcal{Q},\IP{,})$ with respect to the balanced class of the solution $\mathfrak{b}$, combined with the existence of a holomorphic volume form.

We go on to propose a refined stability condition based on hyperK\"ahler moment maps (see Section \ref{sec:HarmonicMet}), which is inspired by the theory of Higgs bundles for orthogonal groups \cite{GP,Hitchin1987}. We build on a result in \cite{GFGM}, which proves that a solution of the Hull-Strominger system induces an indefinite Hermitian-Einstein metric $\mathbf{G}$ on $(\mathcal{Q},\IP{,})$, that is, satisfying
\begin{equation}\label{eq:HEintro}
F_\mathbf{G} \wedge \omega^{n-1} = 0.
\end{equation}
In particular, the Chern connection $D^\mathbf{G}$ of $\mathbf{G}$ is a \emph{non-Hermitian Yang-Mills connection} in the sense of Kaledin and Verbitsky \cite{KaledinVerbitsky}. This result shall be compared with a physical result in \cite{OssaLarforsSvanes}, which states the Hull-Strominger system is equivalent to a suitable Hermitian-Yang-Mills equation on a Courant algebroid to all orders in perturbation theory. Then, the basic idea is that a solution of the Hull-Strominger system should carry a positive definite \emph{harmonic metric} $\mathbf{H}$ for $(\mathcal{Q},\IP{,},D^\mathbf{G})$, that is, satisfying 
\begin{equation*}
(\nabla^{\mathbf{H}})^* \Psi + i_{ \theta_\omega^\sharp}\Psi = 0,
\end{equation*}
where $D^\mathbf{G} = \nabla^\mathbf{H} + \Psi$ is the unique decomposition of the Chern connection into an $\mathbf{H}$-unitary connection and a \emph{Higgs field}. Here, $\theta_\omega^\sharp$ denotes the dual vector field of the Lee form of $\omega$. Using a different decomposition of $D^\mathbf{G}$ \emph{\`a la Hitchin} \cite{Hitchin1987}, in our main result, Theorem \ref{thm:HSstab}, we prove that the existence of a harmonic metric for the Hull-Strominger system implies a numerical stability condition reminiscent of GIT. 

Our proposal is illustrated with an infinite number of continuous families of solutions on the Iwasawa manifold in Section \ref{sec:examples}. It is interesting to observe that, in our examples, the curvature of the connection which formally plays the role of the connection $\nabla$ in the tangent bundle corresponds precisely to the Higgs field $\Psi$ (see Remark \ref{rem:HSabstract} and Lemma \ref{lemma:Kmmap}). As in \cite{GFGM}, we will assume throughout that the connection $\nabla$ in the tangent bundle is Hermitian-Yang-Mills. This suggests a new point of view on the spurious degrees of freedom corresponding to the connection $\nabla$ in the physical interpretation of the solutions, which have motivated an extensive discussion in the string theory literature (see \cite{McOristSvanes} and references therein).

An open question which we have not been able to solve is to establish a more clear relation between the Dorfman bracket $[,]$ on $\mathcal{Q}$ and the orthogonal connection $D^\mathbf{G}$, in a way that our new stability condition is formulated more naturally in terms of the triple $(\mathcal{Q},\IP{,},[,])$. Even though our picture is mostly conjectural, we expect that this stability condition, along with the notion of harmonic metric that we introduce, will lead to new obstructions to the existence of solutions in future studies, similarly as in \cite{GaJoSt}.

The main results and ideas of this work were presented in September 2022, during the conference `Moduli spaces and geometric structures' at the ICMAT in Madrid, in honour of Oscar Garc\'ia-Prada. With the aim of minimizing the elapsed time between the present arXiv submission and that of \cite{GFGM}, we decided to postpone the former. Very recently, an interesting paper by Pan, Shen, and Zhang appeared \cite{PSZ}, which characterizes the existence of harmonic metrics for non-Hermitian Yang-Mills connections on K\"ahler manifolds. In particular, they provide an alternative proof of our Proposition \ref{prop:harmHKstab} in the K\"ahler case. In the light of the present paper, it would be interesting to explore further applications of the main results in \cite{PSZ} to the Hull-Strominger system.

\section{The Hull-Strominger system and slope stability}\label{sec:HSslope}

\subsection{The Hull-Strominger system and Hermitian-Einstein metrics}\label{sec:HS-HE}

We start by recalling some background on the Hull-Strominger system, including the recent development in \cite{GFGM} relating solutions to these equations to indefinite Hermitian-Einstein metrics on holomorphic Courant algebroids. We will use an abstract definition of the Hull-Strominger system, as considered in \cite[Definition 2.4]{GaRuShTi}, which is valid in arbitrary dimensions. Our construction requires that the connection $\nabla$ in the tangent bundle in the original formulation of the system is Hermitian-Yang-Mills, and hence in our discussion we will implicitly assume this condition (see Remark \ref{rem:HSabstract}).

Let $X$ be a compact complex manifold of dimension $n$ endowed with a holomorphic volume form $\Omega$. Let $V_0$ and $V_1$ denote holomorphic vector bundles over $X$ satisfying
\begin{equation}\label{eq:c1c2abstract}
 ch_2(V_0) = ch_2(V_1) \in H^{2,2}_{BC}(X,\RR).
\end{equation}
Here $H^{p,q}_{BC}(X)$ are the Bott-Chern cohomology groups of the complex manifold $X$, defined by
\begin{equation}\label{eq:BC}
H^{p,q}_{BC}(X)=\frac{\mathrm{ker} \hspace{1mm} d: \Omega^{p,q}(X,\mathbb{C}) \longrightarrow \Omega^{p+q+1}(X,\mathbb{C})}{\mathrm{Im} \hspace{1mm}  dd^{c}: \Omega^{p-1,q-1}(X,\mathbb{C}) \longrightarrow \Omega^{p,q}(X,\mathbb{C})}
\end{equation}
and $H^{p,p}_{BC}(X,\mathbb{R}) \subset H^{p,p}_{BC}(X)$ is the canonical real structure.

\begin{definition}\label{def:HSabstract}
We say that a triple $(g,h_0,h_1)$, where $g$ is a Hermitian metric on $X$ and $h_j$ is a Hermitian metric on $V_j$, $j = 0,1$, satisfies the \emph{Hull-Strominger system} with coupling constant $\alpha \in \RR$ if
\begin{equation}\label{eq:HSabstract}
\begin{split}
F_{h_0}\wedge \omega^{n-1} & =0,\\
F_{h_1}\wedge \omega^{n-1} & =0,\\
d(\|\Omega\|_\omega \omega^{n-1}) & = 0,\\
dd^c \omega - \alpha \tr F_{h_0} \wedge F_{h_0} + \alpha \tr F_{h_1} \wedge F_{h_1} & = 0,
\end{split}
\end{equation}
where $\omega = g(J,)$ is the Hermitian form of $g$.
\end{definition}

\begin{remark}\label{rem:HSabstract}
In the original definition of the Hull-Strominger system in the physics literature \cite{HullTurin,Strom}, one has that $n=3$ and that $V_0$ is isomorphic to the holomorphic tangent bundle $T^{1,0}$ as a smooth complex vector bundle (see \cite{GFGM} for further clarifications on the ansatz considered here). Formally, in our setup, the Chern connection of $h_0$ on $V_0$ plays the role of the connection $\nabla$ on $T^{1,0}$, which is assumed to satisfy the Hermitian-Yang-Mills equations (other choices are possible, as e.g. in \cite{LiYau}). 
\end{remark}

Assume that $(X,\Omega,V_0,V_1)$ admits a solution $(g,h_0,h_1)$ of the Hull-Strominger system \eqref{eq:HSabstract} with coupling constant $\alpha$. As observed in \cite{GFGM}, these data determines a holomorphic orthogonal bundle endowed with an indefinite Hermitian-Einstein metric. Denote by $P$ the holomorphic principal $G$-bundle of split frames of $V_0 \oplus V_1$. Note that $\ad P \cong \End V_0 \oplus \End V_1$ and consider the pairing $\IP{,}: \ad P \otimes \ad P \to \mathbb{C}$ induced by 
\begin{equation}\label{eq:pairingHS}
\IP{,} : = - \alpha \tr_{V_0} + \alpha \tr_{V_1}.
\end{equation}
As a smooth complex vector bundle, the orthogonal bundle determined by the solution is given by
\begin{equation*}\label{eq:Qexpb}
\mathcal{Q} = T^{1,0} \oplus\ad P\oplus T_{1,0}^*,
\end{equation*}
where $T^{1,0}$ and $T_{1,0}^*$ denote respectively the holomorphic tangent and cotangent bundles of $X$, and has non-degenerate symmetric bilinear form
\begin{equation}\label{eq:pairing}
\IP{V + r + \xi , V + r + \xi}  = \xi(V) + \IP{r,r}.
\end{equation}
The holomorphic structure is then given by the Dolbeault operator
\begin{equation}\label{eq:DolQ}
\dbar^{\mathcal{Q}} (V + r + \xi)  = \dbar V + i_V F_\theta^{1,1} + \dbar^\theta r + \dbar \xi - i_{V}(2i\partial \omega) + 2\IP{ F_\theta^{1,1}, r},
\end{equation}
where $\theta$ denotes the Chern connection of the metric $h = h_0 \oplus h_1$ on $V_0 \oplus V_1$. 

\begin{lemma}[\cite{GaRuShTi,GaRuTi2}]\label{lem:HSQ}
The bundle $\mathcal{Q} = T^{1,0} \oplus\ad P\oplus T_{1,0}^*$ endowed with the Dolbeault operator \eqref{eq:DolQ} defines a holomorphic extension of the holomorphic Atiyah algebroid $A_P$ of $P$ by the holomorphic cotangent bundle
\begin{equation}\label{eq:holCoustr}
0 \longrightarrow T^*_{1,0} \overset{\pi^*}{\longrightarrow} \mathcal{Q} \longrightarrow A_P \longrightarrow 0.
\end{equation}
Furthermore, the pairing \eqref{eq:pairing} is non-degenerate and holomorphic.
\end{lemma}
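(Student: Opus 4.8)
This is the content of \cite{GaRuShTi, GaRuTi2}, and the way I would reconstruct it is to verify in turn the three assertions it contains: (a) that $(\dbar^{\mathcal{Q}})^2 = 0$, so that \eqref{eq:DolQ} is an integrable Dolbeault operator; (b) that the inclusion $\pi^*$ and the projection $\mathcal{Q} \to A_P$ are morphisms of holomorphic bundles, whence \eqref{eq:holCoustr} is exact as a sequence of holomorphic bundles; and (c) that the symmetric form \eqref{eq:pairing} is non-degenerate and $\dbar^{\mathcal{Q}}$-parallel. The only input from the Hull--Strominger system is its fourth equation, which, by the definition \eqref{eq:pairingHS} of $\IP{,}$, is precisely the \emph{Bianchi identity}
$$ dd^c \omega + \IP{F_\theta \wedge F_\theta} = 0 ; $$
as $dd^c\omega$ is of type $(2,2)$ and proportional to $\partial\dbar\omega$, and $F_\theta = F_\theta^{1,1}$ because $\theta$ is the Chern connection of the holomorphic bundle $V_0 \oplus V_1$, this is an identity of $(2,2)$-forms. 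Apart from it, the only facts used are, again from $\theta$ being a Chern connection, that $\dbar^\theta$ is the Dolbeault operator of $\ad P$ --- so $(\dbar^\theta)^2 = 0$ --- and that $\dbar^\theta F_\theta^{1,1} = 0$, the $(0,\bullet)$-component of $d^\theta F_\theta = 0$.

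For (a) I would expand $(\dbar^{\mathcal{Q}})^2(V + r + \xi)$ in the splitting $\mathcal{Q} = T^{1,0} \oplus \ad P \oplus T^*_{1,0}$, extending \eqref{eq:DolQ} to $\mathcal{Q}$-valued $(0,1)$-forms by the graded Leibniz rule. The $T^{1,0}$-component vanishes because $\dbar^2 = 0$; the $\ad P$-component vanishes because $\dbar^2 = 0$, $(\dbar^\theta)^2 = 0$ and $\dbar^\theta F_\theta^{1,1} = 0$. The $T^*_{1,0}$-component is the only one with content: collecting terms in it, the $i_{\dbar V}$-corrections produced by the Leibniz rule on the $-i_V(2i\partial\omega)$-term cancel against the contribution of the $\dbar V$-slot, the terms in $\dbar^\theta r$ cancel in pairs using $\dbar^\theta F_\theta^{1,1} = 0$ and the graded symmetry of $\IP{\cdot\wedge\cdot}$, and --- with the interior-product identities $\dbar(i_V\beta) + i_V(\dbar\beta) = i_{\dbar V}\beta$ and $i_V\IP{F_\theta^{1,1}\wedge F_\theta^{1,1}} = 2\IP{(i_V F_\theta^{1,1})\wedge F_\theta^{1,1}}$ --- what is left is $-i_V$ applied to the $(2,2)$-form $dd^c\omega + \IP{F_\theta^{1,1} \wedge F_\theta^{1,1}}$, which vanishes by the Bianchi identity. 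Hence $(\dbar^{\mathcal{Q}})^2 = 0$; this is the step where the numerical coefficients in \eqref{eq:DolQ} matter and where the anomaly equation of the system is genuinely used.

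Parts (b) and (c) are then bookkeeping. For (b): putting $V = 0$, $r = 0$ in \eqref{eq:DolQ} gives $\dbar^{\mathcal{Q}}\xi = \dbar\xi$, so $\pi^*$ intertwines $\dbar^{\mathcal{Q}}$ with the Dolbeault operator of $T^*_{1,0}$; and the projection $V + r + \xi \mapsto V + r$ carries \eqref{eq:DolQ} to $\dbar V + i_V F_\theta^{1,1} + \dbar^\theta r$, which is exactly the Dolbeault operator of the holomorphic Atiyah algebroid $A_P$ in the splitting afforded by $\theta$ (the term $i_V F_\theta^{1,1}$ representing the Atiyah class of $P$), so $\mathcal{Q} \to A_P$ is holomorphic; since the underlying sequence of smooth bundles is split-exact, \eqref{eq:holCoustr} is exact in the holomorphic category. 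For (c): \eqref{eq:pairing} is non-degenerate because it restricts to the canonical hyperbolic pairing on $T^{1,0} \oplus T^*_{1,0}$ and to $-\alpha\tr_{V_0} + \alpha\tr_{V_1}$ on $\ad P$ --- non-degenerate for $\alpha \neq 0$, the trace forms on $\End V_0$, $\End V_1$ being so --- and the three summands are mutually orthogonal. Finally, $\dbar\IP{s,t} = \IP{\dbar^{\mathcal{Q}}s,t} + \IP{s,\dbar^{\mathcal{Q}}t}$ is verified by expanding both sides: the ``principal'' terms match because $\theta$ is $h$-unitary and the trace form and the evaluation $T^{1,0}\otimes T^*_{1,0} \to \CC$ are $\dbar$-parallel, while the extra contributions of \eqref{eq:DolQ} cancel among themselves --- the $\partial\omega$-terms by the antisymmetry of interior products, the $F_\theta^{1,1}$-terms by the symmetry of $\IP{,}$ on $\ad P$ together with $i_{V}\IP{F_\theta^{1,1}, r} = \IP{i_{V}F_\theta^{1,1}, r}$. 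I expect the only genuine difficulty to be the $T^*_{1,0}$-component of $(\dbar^{\mathcal{Q}})^2$ in (a); everything else is a careful sign count, and the full statement --- together with the compatible Dorfman bracket, which makes $\mathcal{Q}$ a holomorphic string algebroid --- is proved in \cite{GaRuShTi, GaRuTi2}, from which it may simply be quoted.
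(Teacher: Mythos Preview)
Your reconstruction is correct and is precisely the approach taken in the cited references \cite{GaRuShTi,GaRuTi2}; the present paper does not supply its own proof of this lemma but simply quotes the result, so your final remark that the statement ``may simply be quoted'' is in fact what the authors do. Your identification of the Bianchi identity as the crucial input for the vanishing of the $T^*_{1,0}$-component of $(\dbar^{\mathcal{Q}})^2$ is exactly the point of the construction in those references.
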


\begin{remark}\label{rem:bracket}
The holomorphic orthogonal bundle $(\mathcal{Q},\IP{,})$ can be endowed with a bracket on holomorphic sections satisfying the Jacobi identity, given by
\begin{equation*}
	\begin{split}
	[V+ r + \xi,W + t + \eta]   = {} & [V,W] - F^{2,0}_\theta(V,W) + \partial^\theta_V t - \partial^\theta_W r - [r,t]\\
	& {} + i_V \partial \eta + \partial (\eta(V)) - i_W\partial \xi\\
	& {} + 2\IP{\partial^\theta r, t} + 2\IP{i_V F_\theta^{2,0}, t} - 2\IP{i_W F_\theta^{2,0}, r}.
\end{split}
\end{equation*}
Combined with the \emph{anchor map} $\pi(V+ r + \xi) = V$ onto $T^{1,0}$, this defines a holomorphic Courant algebroid of string type (see \cite[Proposition 2.4]{GaRuTi2}). We will not use this enhanced structure in this work (cf. Remark \ref{rem:connectionvsbracket}).
\end{remark}

The next result from \cite{GFGM} establishes the relation between solutions of the Hull-Strominger system and indefinite Hermitian-Einstein metrics. Given a solution of the Hull-Strominger system, there is a natural bundle isomorphism 
\begin{equation}\label{eq:Bismutiso}
T^{1,0} \oplus T^*_{1,0} \cong TX \otimes \CC \colon V + \xi \mapsto V - \tfrac{1}{2} g^{-1}\xi
\end{equation}
induced by the metric $g$ of the solution.

\begin{proposition}[\cite{GFGM}]\label{prop:HS-HE}
Assume that $(X,\Omega,V_0,V_1)$ admits a solution $(g,h_0,h_1)$ of the Hull-Strominger system \eqref{eq:HSabstract} with coupling constant $\alpha$. Let $(\mathcal{Q},\IP{,})$ be the holomorphic orthogonal bundle associated to the solution as in Lemma \ref{lem:HSQ}. Consider the identification
\begin{equation*}
\mathcal{Q} \cong TX \otimes \mathbb{C} \oplus \End V_0 \oplus \End V_1
\end{equation*}
induced by \eqref{eq:Bismutiso}. Then, the (possibly indefinite) Hermitian metric
\begin{equation}\label{eq:Gexplicit}
\mathbf{G}  =\left(\begin{array}{ccc}
    g & 0 & 0 \\
    0 & \alpha \tr_{V_0} & 0\\
    0 & 0 &  -\alpha \tr_{V_1}
    \end{array}\right)
\end{equation}
is Hermitian-Einstein with respect to $g$, that is,
\begin{equation}\label{eq:HE}
F_\mathbf{G} \wedge \omega^{n-1} = 0,
\end{equation}
where $\omega$ is the Hermitian form of $g$ and $F_\mathbf{G}$ denotes the curvature of the Chern connection of $\mathbf{G}$ on $\mathcal{Q}$.
\end{proposition}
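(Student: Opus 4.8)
The plan is to verify the Hermitian-Einstein equation \eqref{eq:HE} directly by computing the curvature $F_\mathbf{G}$ of the Chern connection of $\mathbf{G}$ on $\mathcal{Q}$, using the explicit block form \eqref{eq:Gexplicit} together with the explicit Dolbeault operator \eqref{eq:DolQ} and contracting against $\omega^{n-1}$. First I would decompose the problem along the filtration $T^*_{1,0} \subset \ker\pi^\perp \subset \mathcal{Q}$ coming from \eqref{eq:holCoustr}, under the smooth splitting $\mathcal{Q} = T^{1,0}\oplus \ad P \oplus T^*_{1,0}$ and the isomorphism \eqref{eq:Bismutiso}. Because $\mathbf{G}$ is block-diagonal in the $TX\otimes\CC \oplus \End V_0 \oplus \End V_1$ picture, the Chern connection $D^\mathbf{G}$ splits as a sum of a connection preserving each summand plus off-diagonal algebraic pieces determined by $\dbar^\mathcal{Q}$; the $(0,1)$-part is fixed by \eqref{eq:DolQ} and the $(1,0)$-part is then determined by compatibility with $\mathbf{G}$. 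Concretely, on the $\ad P$ summand the induced connection is the one built from the Chern connection $\theta$ of $h = h_0\oplus h_1$, whose curvature wedged with $\omega^{n-1}$ vanishes by the first two equations of \eqref{eq:HSabstract}; on the $TX\otimes\CC$ summand, the relevant connection is the one induced by $g$ via the Bismut-type identification \eqref{eq:Bismutiso}, and here the key input is the conformally balanced condition $d(\|\Omega\|_\omega\omega^{n-1}) = 0$, which forces the trace part of the $g$-curvature to drop out after contraction.

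The heart of the computation is the cross terms. The off-diagonal components of $\dbar^\mathcal{Q}$ in \eqref{eq:DolQ} involve $F_\theta^{1,1}$, the expression $i_V(2i\partial\omega)$, and the pairing term $2\IP{F_\theta^{1,1},r}$; correspondingly $F_\mathbf{G}$ acquires bilinear contributions of the schematic shape $\partial\omega \wedge \bar\partial\omega$, $\tr F_h\wedge F_h$, and mixed $F_\theta\wedge(\text{derivative of }\omega)$ terms. The point is that when one assembles all of these and wedges with $\omega^{n-1}$, the anomaly equation — the fourth line of \eqref{eq:HSabstract}, $dd^c\omega - \alpha\tr F_{h_0}\wedge F_{h_0} + \alpha\tr F_{h_1}\wedge F_{h_1} = 0$, equivalently $2i\partial\bar\partial\omega = \alpha\tr F_{h_0}\wedge F_{h_0} - \alpha\tr F_{h_1}\wedge F_{h_1}$ up to the usual conventions — is exactly what makes the sum telescope to zero. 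In other words, the pairing \eqref{eq:pairingHS} with its sign $-\alpha\tr_{V_0} + \alpha\tr_{V_1}$ is designed so that the $\tr F_h\wedge F_h$ contribution from the curvature of $\mathbf{G}$ matches the Bianchi-type identity coming from differentiating the balanced form, consumed by the anomaly cancellation. I would organize this as: (i) write $F_\mathbf{G}$ in block form, isolating diagonal and off-diagonal parts; (ii) contract each diagonal block with $\omega^{n-1}$ and use the Hermitian-Yang-Mills equations for $h_0, h_1$ and the conformally balanced equation for the $g$-block; (iii) contract the off-diagonal blocks and show they vanish using the anomaly equation and the identity $d(\|\Omega\|_\omega\omega^{n-1}) = 0$ rewritten via the Lee form $\theta_\omega = J d\log\|\Omega\|_\omega$.

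The main obstacle I anticipate is bookkeeping: getting the $(1,0)$-part of $D^\mathbf{G}$ right from metric compatibility when the metric $\mathbf{G}$ is indefinite and the splitting is only smooth, so that there are genuine connection (not just curvature) cross terms, and then tracking all the factors of $i$, $\alpha$, and the $n-1$ combinatorial constants through the wedge with $\omega^{n-1}$ so that the cancellation is exact rather than merely up to a constant. A cleaner route — and the one I would actually pursue to avoid a brute-force $\dbar^\mathcal{Q}$-curvature computation — is to invoke the moment-map or Courant-algebroid description from \cite{GaRuShTi, GaRuTi2, GFGM}: the Hermitian-Einstein condition on $(\mathcal{Q},\IP{,},\mathbf{G})$ is equivalent to the vanishing of a moment map whose components are precisely the four equations of \eqref{eq:HSabstract}, so that \eqref{eq:HE} is a formal consequence of $(g,h_0,h_1)$ solving the system. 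This reduces the proof to identifying $\mathbf{G}$ in \eqref{eq:Gexplicit} as the metric singled out by that construction and citing the corresponding statement in \cite{GFGM}, with the direct curvature computation relegated to a remark or appendix as a consistency check.
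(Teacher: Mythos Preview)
The paper does not actually prove this proposition: it is stated with the attribution \cite{GFGM} and no proof follows. What the paper does record, later in \eqref{eq:DGexp}, is the explicit block form of the Chern connection $D^{\mathbf{G}}$ (also cited from \cite{GFGM}), which is precisely the outcome of the ``metric compatibility'' step you describe. So your direct-computation approach is the right one and is consistent with what the paper uses downstream; your alternative of simply citing \cite{GFGM} is in fact exactly what the paper does.

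One small correction to your block-by-block accounting. In the decomposition $\mathcal{Q}\cong TX\otimes\CC\oplus\End V_0\oplus\End V_1$, the diagonal $TX\otimes\CC$-block of $F_{\mathbf{G}}$ is not just the curvature of a $g$-compatible connection: from \eqref{eq:DGexp} one sees it equals $F_{\nabla^-}$ \emph{plus} quadratic terms of the form $\mathbb{F}_{h_j}^\dagger\wedge\mathbb{F}_{h_j}$ coming from the off-diagonal entries. Consequently the vanishing of that block after wedging with $\omega^{n-1}$ is not a consequence of the conformally balanced equation alone; it uses the instanton property of $\nabla^-$ (itself a nontrivial consequence of the full Hull--Strominger system, cf.\ \cite[Lemma 4.3]{GFGM} as invoked in the proof of Lemma~\ref{lemma:Kmmap}) together with the anomaly equation to absorb the $\tr F_{h_j}\wedge F_{h_j}$ contributions. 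This is essentially the ``cross terms'' analysis you describe in your second paragraph, just distributed among the blocks differently than your first paragraph suggests. With that adjustment your outline is correct.
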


\begin{remark}\label{rem:DelaOssa}
Proposition \ref{prop:HS-HE} shall be compared with the physical result by De la Ossa, Larfors, and Svanes in \cite[Corollary 1]{OssaLarforsSvanes}, who observed that the Hull-Strominger system is equivalent to \eqref{eq:HE} to all orders in perturbation theory.
\end{remark}

\subsection{The K\"ahler property versus slope stability}\label{sec:slope}

In this section we investigate the Mumford-Takemoto slope stability of the holomorphic orthogonal bundle $(\mathcal{Q},\IP{,})$ in relation to the K\"ahler property of the solution. In particular, we will recover a no-go result for the Hull-Strominger system, back to the seminal work of Candelas, Horowitz, Strominger, and Witten \cite{CHSW}.

Let $X$ be a compact complex manifold of dimension $n$. We assume that $X$ admits a balanced Hermitian metric $\omega_0$, that is, $d\omega_0^{n-1} = 0$. We denote by
$$
\mathfrak{b}_0 = [\omega_0^{n-1}] \in H^{n-1,n-1}_{BC}(X,\mathbb{R})
$$
the associated balanced class in Bott-Chern cohomology. Let $(\mathcal{Q},\IP{,})$ be a holomorphic orthogonal bundle over $X$. Recall that a positive definite Hermitian metric $\mathbf{H}$ on $\mathcal{Q}$ is said to be compatible with the orthogonal structure $\IP{,}$ if there exists a $\mathbb{C}$-antilinear orthogonal involution $\sigma \colon \mathcal{Q} \to \mathcal{Q}$, that is, $\langle \sigma, \sigma \rangle = \overline{\langle, \rangle}$ and $\sigma^2=id$, and such that
$$
\mathbf{H} = \IP{\cdot,\sigma \cdot}.
$$
Given such a metric $\mathbf{H}$, we will denote by $D^\mathbf{H}$ and $F_\mathbf{H} := F_{D^\mathbf{H}}$ its Chern connection and Chern curvature, respectively.

\begin{remark}\label{rem:Gcompatible}
Observe that the (possibly indefinite) Hermitian metric $\mathbf{G}$ in Proposition \ref{prop:HS-HE} is precisely of this form, for $\sigma(s) = - \overline{s}$. Here, the conjugation is obtained via the isomorphism $\mathcal{Q} \cong (TX \oplus \ad P_h) \otimes \mathbb{C}$ induced by \eqref{eq:Bismutiso}, where $P_h$ denotes the bundle of unitary frames of $(V_0\oplus V_1,h_0 \oplus h_1)$. In the sequel, we will reserve the notation $\mathbf{H}$ for Hermitian metrics which are positive definite.
\end{remark}

The existence of a compatible Hermitian metric $\mathbf{H}$ on $(\mathcal{Q},\IP{,})$ satisfying the Hermitian-Einstein equation
\begin{equation}\label{eq:HEH}
F_\mathbf{H} \wedge \omega_0^{n-1} = 0
\end{equation}
can be characterized in terms of a slope stability criteria as in the Donaldson-Uhlenbeck-Yau Theorem \cite{Don,UYau} and its extensions to Hermitian manifolds (see \cite{Buchdahl,LiYauHYM,lt}). To state the precise result which we will use, let us recall first some basic definitions. Given a torsion-free coherent sheaf $\mathcal{F}$ of $\mathcal{O}_X$-modules over $X$, the determinant $\det \mathcal F : = ((\Lambda^r \mathcal F)^*)^*$, where $r$ denotes the rank of $\mathcal{F}$, is a holomorphic line bundle over $X$. Given now the balanced class $\mathfrak{b}_0$, we can define the slope of $\mathcal{F}$ by
$$
\mu_{\mathfrak{b}_0}(\mathcal{F}) = \frac{c_1(\det \mathcal{F})\cdot \mathfrak{b}_0}{r} \in \mathbb{R}
$$ 
where the first Chern class $c_1(\det \mathcal{F})$ of $\det \mathcal{F}$ and $\mathfrak{b}_0$ are regarded as elements in the De Rham cohomology of $X$. We say that a subsheaf $\mathcal{F} \subset \mathcal{Q}$ is isotropic if $\IP{\mathcal{F},\mathcal{F}} = 0$ (see e.g. \cite{GP,Araujo}).

\begin{definition}\label{d:stab} 
Let $X$ be a compact complex manifold endowed with a balanced class $\mathfrak{b}_0 \in H_{BC}^{n-1,n-1}(X,\mathbb{R})$. A holomorphic orthogonal bundle $(\mathcal{Q},\IP{,})$ over $X$ is
\begin{enumerate}

\item  \emph{slope $\mathfrak{b}_0$-semistable} if for any isotropic coherent subsheaf $\mathcal{F} \subset \mathcal{Q}$ one has
$$
\mu_{\mathfrak{b}_0}(\mathcal{F}) \; \leqslant \; 0,
$$

\item  \emph{slope $\mathfrak{b}_0$-stable} if for any proper isotropic coherent subsheaf $\mathcal{F} \subset \mathcal{Q}$ one has
$$
\mu_{\mathfrak{b}_0}(\mathcal{F}) \; < \; 0,
$$

\item \emph{slope $\mathfrak{b}_0$-polystable} if it is slope $\mathfrak{b}_0$-semistable and whenever $\mathcal{F} \subset \mathcal{Q}$ is an isotropic coherent subsheaf with $\mu_{\mathfrak{b}_0}(\mathcal{F}) = 0$, there is a coisotropic coherent subsheaf $\mathcal{W} \subset \mathcal{Q}$ such that
$$
\mathcal{Q} = \mathcal{W} \oplus \mathcal{F}.
$$

\end{enumerate}

\end{definition}


The relation between slope stability and the Hermitian-Einstein equation \eqref{eq:HEH} for compatible Hermitian metrics is provided by the following version of the Donaldson-Uhlenbeck-Yau Theorem (see e.g. \cite{Araujo,lt}):

\begin{theorem}\label{t:DUY} Let $X$ be a compact complex manifold. Let $\omega_0$ be a balanced Hermitian metric on $X$ with balanced class $\mathfrak{b}_0 = [\omega^{n-1}_0] \in H_{BC}^{n-1,n-1}$. A holomorphic orthogonal bundle $(\mathcal{Q},\IP{,})$ over $X$ admits a compatible Hermitian metric $\mathbf{H}$ solving the Hermitian-Einstein equation \eqref{eq:HEH} if and only if it is slope $\mathfrak{b}_0$-polystable.
\end{theorem}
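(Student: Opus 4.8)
The plan is to deduce Theorem~\ref{t:DUY} from the known Donaldson--Uhlenbeck--Yau correspondence for principal bundles over balanced (Gauduchon) manifolds, applied to the structure group $G = O(N,\CC)$. First I would observe that a holomorphic orthogonal bundle $(\mathcal{Q},\IP{,})$ of rank $N$ is the same data as a holomorphic principal $O(N,\CC)$-bundle $P_{\mathcal{Q}}$ (the bundle of orthonormal frames for the holomorphic pairing), and that a compatible positive-definite Hermitian metric $\mathbf{H} = \IP{\cdot,\sigma\cdot}$ is precisely a reduction of $P_{\mathcal{Q}}$ to the maximal compact subgroup $O(N)\subset O(N,\CC)$, the antilinear involution $\sigma$ being the associated real structure. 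Under this dictionary the Chern connection $D^{\mathbf{H}}$ is the canonical connection of the reduction, and the Hermitian--Einstein equation $F_{\mathbf{H}}\wedge\omega_0^{n-1}=0$ is exactly the Hermite--Yang--Mills equation for the $O(N,\CC)$-connection with respect to the Gauduchon metric $\omega_0$ (note the central-element/constant term on the right-hand side vanishes because the Lie algebra $\mathfrak{so}(N,\CC)$ is semisimple, so its center is trivial and the topological slope of the adjoint bundle is automatically zero).

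Next I would invoke the version of the DUY theorem for principal bundles over compact Hermitian manifolds with Gauduchon metric --- balanced metrics being in particular Gauduchon --- as established for reductive structure groups; the relevant references are already cited in the excerpt (\cite{lt}, and \cite{Araujo} for the orthogonal case specifically). That theorem says $P_{\mathcal{Q}}$ admits a Hermite--Yang--Mills reduction if and only if $P_{\mathcal{Q}}$ is polystable, where (poly)stability is tested against reductions to parabolic subgroups of $O(N,\CC)$. The final, and really the only substantive, step is the group-theoretic translation: parabolic subgroups of $O(N,\CC)$ are the stabilizers of isotropic flags, so a reduction of $P_{\mathcal{Q}}$ to a maximal parabolic corresponds to a choice of isotropic coherent subsheaf $\mathcal{F}\subset\mathcal{Q}$ (one takes saturations to pass from subbundles defined off a complex-codimension-two set to reflexive subsheaves), and under this correspondence the Mumford--Takemoto degree/slope of the associated line bundle matches $\mu_{\mathfrak{b}_0}(\mathcal{F})$ up to a positive normalization. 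Thus semistability of $P_{\mathcal{Q}}$ becomes $\mu_{\mathfrak{b}_0}(\mathcal{F})\leqslant 0$ for all isotropic $\mathcal{F}$, and polystability unpacks to the splitting condition $\mathcal{Q} = \mathcal{W}\oplus\mathcal{F}$ with $\mathcal{W}$ coisotropic, matching Definition~\ref{d:stab}. I would also record the sign convention: the orthogonal pairing forces $\mu_{\mathfrak{b}_0}(\mathcal{Q})=0$ (since $\mathcal{Q}\cong\mathcal{Q}^*$), so destabilizing subsheaves have nonnegative slope, explaining why the inequalities in Definition~\ref{d:stab} are stated with $\leqslant 0$ and $< 0$ rather than relative to $\mu_{\mathfrak{b}_0}(\mathcal{Q})$.

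The main obstacle I anticipate is not analytic but bookkeeping: carefully matching the notion of stability for $O(N,\CC)$-bundles used in the cited literature (reductions to parabolics, admissible/filtration-type conditions on coherent subsheaves) with the isotropic-subsheaf formulation in Definition~\ref{d:stab}, including the subtlety that coherent subsheaves need not be subbundles and one must work with their saturations and reflexive hulls so that $\det$ and hence $\mu_{\mathfrak{b}_0}$ are well defined. A secondary point requiring care is that the DUY theorem over merely balanced (Gauduchon, non-K\"ahler) manifolds is the ``Hermitian'' generalization \cite{Buchdahl,LiYauHYM,lt} rather than the original K\"ahler statement \cite{Don,UYau}, and one should check that the orthogonal-group refinement \cite{Araujo} is indeed available in that generality --- if not, one can instead run the argument by first solving the Hermitian--Einstein equation on the underlying vector bundle $\mathcal{Q}$ and then averaging the resulting metric over the real structure $\sigma$ to produce a compatible one, using uniqueness of the Hermitian--Einstein metric up to the obvious symmetries. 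Given that these are standard techniques, I would present the proof as a relatively short reduction, citing \cite{lt,Araujo} for the hard analytic input and spelling out only the $O(N,\CC)$-representation-theoretic dictionary in detail.
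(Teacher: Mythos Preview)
Your approach is correct and matches the paper's treatment: the paper does not give its own proof of Theorem~\ref{t:DUY} but simply records it as a known result, citing \cite{Araujo,lt} before the statement. Your proposal to reduce to the principal-bundle DUY theorem for the reductive group $O(N,\CC)$ over a Gauduchon manifold, and then to translate the parabolic-reduction stability into the isotropic-subsheaf formulation of Definition~\ref{d:stab}, is exactly the content behind those citations; in fact you are supplying more detail than the paper does.
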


Let us turn next to the relation with the Hull-Strominger system. As in Section \ref{sec:HS-HE}, we assume that $X$ admits a holomorphic volume form $\Omega$. We fix a pair of holomorphic vector bundles $V_0$ and $V_1$ over our compact complex manifold $X$ satisfying \eqref{eq:c1c2abstract} and take $P$ to be the holomorphic principal bundle of split frames of $V_0 \oplus V_1$ equipped with the pairing \eqref{eq:pairingHS}. Let $(g,h_0,h_1)$ be a solution of the Hull-Strominger system \eqref{eq:HSabstract}. Consider the associated holomorphic orthogonal bundle $(\mathcal{Q},\IP{,})$. In our next results we investigate the relationship between the slope polystable of $(\mathcal{Q},\IP{,})$, in the sense of Definition \ref{d:stab}, and the K\"ahler property of the solution. The key to our argument is the existence of a canonical isotropic subsheaf given by the holomorphic cotangent bundle
\begin{equation}\label{eq:T*injects}
T^*_{1,0} \overset{\pi^*}{\hookrightarrow} \mathcal{Q}.
\end{equation}

\begin{lemma}\label{lem:HSnogo}
Let $X$ be a compact K\"ahler manifold endowed with a holomorphic volume form $\Omega$. Let $V_0$ and $V_1$ be holomorphic vector bundles over $X$ satisfying \eqref{eq:c1c2abstract}. Let $(g,h_0,h_1)$ be a solution of the Hull-Strominger system \eqref{eq:HSabstract} with $\alpha \in \RR$ and consider the associated holomorphic orthogonal bundle $(\mathcal{Q},\IP{,})$. Suppose that  $X$ admits a balanced class $\mathfrak{b}_0 \in H^{n-1,n-1}(X,\mathbb{R})$ such that $(\mathcal{Q},\IP{,})$ is slope $\mathfrak{b}_0$-polystable, and K\"ahler classes $[\omega_i]\in H^{1,1}(X,\mathbb{R})$ such that $V_i$ is $[\omega_i]$-polystable. Then $g$ is a K\"ahler metric and $h_0$ and $h_1$ are flat.
\end{lemma}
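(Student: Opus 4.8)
The plan is to use the canonical isotropic inclusion $\pi^* \colon T^*_{1,0} \hookrightarrow \mathcal{Q}$ from \eqref{eq:T*injects} together with polystability to force a holomorphic splitting, and then extract strong consequences from the existence of the holomorphic volume form $\Omega$. First I would compute the slope $\mu_{\mathfrak{b}_0}(T^*_{1,0})$. Since $\det T^*_{1,0} = K_X$ and $X$ carries a holomorphic volume form, $K_X$ is holomorphically trivial, so $c_1(K_X) = 0$ and hence $\mu_{\mathfrak{b}_0}(T^*_{1,0}) = 0$. Because $T^*_{1,0} \subset \mathcal{Q}$ is a proper isotropic subsheaf of slope zero and $(\mathcal{Q},\IP{,})$ is slope $\mathfrak{b}_0$-polystable, Definition \ref{d:stab}(3) gives a coisotropic coherent subsheaf $\mathcal{W} \subset \mathcal{Q}$ with $\mathcal{Q} = \mathcal{W} \oplus T^*_{1,0}$ as holomorphic bundles (after reflexivizing; one checks the splitting is by subbundles away from a codimension-two set and extends).

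Next I would analyze the induced splitting at the level of the extension \eqref{eq:holCoustr}. The sub-extension $0 \to T^*_{1,0} \to \pi^{-1}(T^{1,0}) \to T^{1,0} \to 0$ sitting inside $\mathcal{Q}$ must split compatibly, and tracing through the explicit Dolbeault operator \eqref{eq:DolQ} the obstruction to splitting the $T^{1,0}$–$T^*_{1,0}$ block is exactly the term $i_V(2i\partial\omega)$, i.e. the class of $\partial\omega$ in the appropriate Dolbeault/$\dbar$-cohomology. Vanishing of this obstruction forces $\partial\omega$ to be $\dbar$-exact of the right type; combined with the conformally balanced condition $d(\|\Omega\|_\omega\omega^{n-1})=0$ and the $\partial\dbar$-lemma on the compact Kähler manifold $X$, I would argue that $\|\Omega\|_\omega$ is constant and $\omega$ is closed, i.e. $g$ is Kähler. (Alternatively: once $g$ is Kähler, the conformally balanced equation forces $\|\Omega\|_\omega$ constant directly, and then the fourth equation of \eqref{eq:HSabstract} reads $\alpha(\tr F_{h_0}\wedge F_{h_0} - \tr F_{h_1}\wedge F_{h_1}) = 0$.)

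With $g$ Kähler, the splitting of the $\ad P$ block and the Hermitian-Yang-Mills property of $h_0,h_1$ feed into a Chern-Weil / Lübke-type inequality argument. Each $V_i$ is $[\omega_i]$-polystable hence carries a Hermitian-Einstein metric; on a Kähler manifold a standard Bochner–Chern–Weil identity shows that if $\tr F_{h_0}\wedge F_{h_0} - \tr F_{h_1}\wedge F_{h_1}$ is exact (which it is, being $\tfrac{1}{\alpha}dd^c\omega$ with $\omega$ now closed, so it is zero in cohomology) and each piece is Hermitian-Einstein, then pairing against $[\omega_i]^{n-2}$ and using the Lübke inequality $\int \|F_{h_i}^0\|^2 \omega^n \geq 0$ with equality iff projectively flat, one concludes $F_{h_0}$ and $F_{h_1}$ are flat (the trace parts vanish since $c_1$ of $\det V_i$ pair to zero against the Kähler class via \eqref{eq:c1c2abstract} together with the splitting, and the trace-free parts vanish by the equality case). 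I would organize this as: (i) $c_1(V_i) = 0$ from the splitting of $\ad P$ inside $\mathcal{Q}$; (ii) $ch_2(V_0) = ch_2(V_1)$ plus $c_1 = 0$ gives $\int \tr F_{h_0}^2 \wedge \omega^{n-2} = \int \tr F_{h_1}^2 \wedge \omega^{n-2}$; (iii) Lübke forces both sides to realize equality, hence flatness.

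The main obstacle I anticipate is step two: rigorously passing from the \emph{sheaf-theoretic} splitting $\mathcal{Q} = \mathcal{W}\oplus T^*_{1,0}$ produced by polystability to a \emph{holomorphic vector bundle} splitting compatible with the filtration of the extension \eqref{eq:holCoustr}, and then correctly identifying the resulting cohomological obstruction class with $[\partial\omega]$ so that its vanishing yields the Kähler property. One must be careful that $\mathcal{W}$ need not a priori respect the sub-bundle $\pi^{-1}(T^{1,0})$, so some argument (using that $T^{1,0}$ and $\ad P$ have different slopes or different Hodge-theoretic behavior, or a direct diagram chase with the anchor map) is needed to see that the splitting descends to the relevant sub-quotient; only then does the explicit formula \eqref{eq:DolQ} give the clean obstruction $i_V(2i\partial\omega)$. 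The remaining steps are essentially Chern-Weil bookkeeping on a compact Kähler manifold and should be routine.
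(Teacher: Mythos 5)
Your opening move coincides with the paper's: $\mu_{\mathfrak{b}_0}(T^*_{1,0})=0$ because $K_X$ is trivial, so polystability splits off the isotropic subsheaf $T^*_{1,0}$ and the class of the extension \eqref{eq:extension} vanishes. After that, however, there are two genuine gaps. First, you try to extract from the splitting the vanishing of the ``$T^{1,0}$--$T^*_{1,0}$ block'', identified with the class of $\partial\omega$. But $T^{1,0}$ is a quotient, not a holomorphic subbundle, of $A_P$: the Dolbeault operator on $A_P$ sends $V\mapsto \dbar V + i_VF_{h_0}+i_VF_{h_1}+\cdots$, so if you write the representative $\gamma$ of the extension class as $\gamma=\dbar\beta$, the $T^{1,0}$-directions pick up curvature terms of the form $\beta_j(i_VF_{h_j})$, and you cannot conclude that $\partial\omega$ is $\dbar$-exact. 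The components that restrict cleanly are exactly the ones you discard: $\End V_0\oplus\End V_1\subset A_P$ is a holomorphic subbundle, and restricting $\gamma=\dbar\beta$ to it yields (for $\alpha\neq 0$) forms $a_j\in\Omega^{1,0}(\End V_j)$ with $\dbar^{V_j}a_j=F_{h_j}$. This is the paper's key consequence of polystability: by Chern--Weil it forces $ch_2(V_j)\cdot[\omega_j]^{n-2}=0$, and then the standard K\"ahler identity $-\tfrac{8\pi^2}{(n-2)!}\,ch_2(V_j)\cdot[\omega_j]^{n-2}=\|F_{\tilde h_j}\|^2_{L^2}$ for an $[\omega_j]$-Hermitian--Einstein metric $\tilde h_j$ gives flatness of $\tilde h_j$, hence of $h_j$. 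Your substitute route to flatness does not close: the relation $ch_2(V_0)=ch_2(V_1)$ (paired, moreover, against possibly different K\"ahler classes $[\omega_0]$, $[\omega_1]$) together with the L\"ubke-type inequality only says both numbers $ch_2(V_j)\cdot[\omega_j]^{n-2}$ are nonpositive; equality of two nonpositive numbers does not make them vanish, and the claim $c_1(V_i)=0$ is neither justified by the splitting nor needed.

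The second gap is the passage to the K\"ahler property. Even granting that $\partial\omega$ were $\dbar$-exact, the step ``hence $\|\Omega\|_\omega$ is constant and $\omega$ is closed'' via the $\partial\dbar$-lemma is not valid: exactness of $\partial\omega$ does not force $\partial\omega=0$, and Kählerness of a conformally balanced metric cannot be obtained by such soft Hodge-theoretic manipulation. The paper argues in the opposite order: first $h_0,h_1$ are shown to be flat, so the anomaly equation collapses to $dd^c\omega=0$; then the conformally balanced equation and the holomorphic volume form give $\nabla^B(\|\Omega\|_\omega^{-1}\Omega)=0$ and $\rho_B(\omega)=0$; and only then does the nontrivial result \cite[Theorem 4.7]{GaJoSt} (itself proved by a stability argument for exact holomorphic Courant algebroids) yield that $g$ is K\"ahler. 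So the two steps you describe as routine bookkeeping and as a $\partial\dbar$-lemma argument are precisely where the content lies, and as written your proposal establishes neither the flatness of $h_j$ nor the K\"ahler property of $g$.
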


\begin{proof}
Assume first that $\alpha \neq 0$. We will prove that $(g,h_0,h_1)$ is also a solution of \eqref{eq:HSabstract} with $\alpha = 0$.   
Consider the canonical isotropic subsheaf \eqref{eq:T*injects}. The existence of a holomorphic volume form $\Omega$ implies that 
$$
\mu_{\mathfrak{b}_0}(T^*_{1,0}) = 0,
$$
for any given balanced class $\mathfrak{b}_0 \in H^{n-1,n-1}$. Hence, assuming that $\mathcal{Q}$ is slope $\mathfrak{b}_0$-polystable, we have that
$$
\mathcal{Q} = \mathcal{W} \oplus T^*_{1,0}
$$
for a holomorphic coisotropic subbundle $\mathcal{W} \subset \mathcal{Q}$. Note further that there are canonical isomorphisms of holomorphic vector bundles
$$
\mathcal{W} \cong \mathcal{Q}/T^*_{1,0} \cong A_P,
$$
and therefore the class of the extension 
\begin{equation}\label{eq:extension}
\begin{split}
0 \longrightarrow T^*_{1,0} \overset{\pi^*}{\longrightarrow} \mathcal{Q} \overset{\pi}{\longrightarrow} A_P \longrightarrow 0
\end{split}
\end{equation}
vanishes. Note that there is a biholomorphism $A_P \cong (T^{1,0} \oplus \End V_0 \oplus \End V_1, \overline{\partial}_0)$ where the Dolbeault operator on the right hand-side is 
$$
\dbar_0 (V + r_0 + r_1)  = \dbar V + i_V F_{h_0} + i_V F_{h_1} + \dbar^{V_0} r_0 + \dbar^{V_1} r_1
$$
and the class of \eqref{eq:extension} is represented by $\gamma \in \Omega^{0,1}(\Hom(A_P,T^*_{1,0}))$, defined by
$$
i_{W}\gamma (V + r_0 + r_1)  =  - i_{W}i_V(2i\partial \omega) - 2 \alpha \tr_{V_0} (i_{W} F_{h_0}r_0) + 2 \alpha \tr_{V_1} (i_{W} F_{h_1}r_1)
$$
for any $V + r_0 + r_1 \in T^{1,0} \oplus \End V_0 \oplus \End V_1$ and $W \in T^{0,1}$. Therefore, the condition 
$$
[\gamma] = 0 \in H^1(\Hom(A_P,T^*_{1,0}))
$$
jointly with $\alpha \neq 0$ implies, in particular, the existence of $a_j \in \Omega^{1,0}(\End V_j)$ such that
$$
\dbar^{V_0} a_0 = F_{h_0}, \qquad \dbar^{V_1} a_1 = F_{h_1}.
$$
By hypothesis, there exists K\"ahler classes $[\omega_i]\in H^{1,1}(X,\mathbb{R})$ such that $V_i$ is $[\omega_i]$-polystable. Let $\tilde{h}_j$ be a Hermitian-Einstein metric on $V_j$ with respect to $\omega_j$. Then, we can use the standard identity in K\"ahler geometry
$$
- \frac{8 \pi^2}{(n-2)!} ch_2(V_j) \cdot [\omega_j]^{n-2} = \|F_{\tilde{h}_j}\|^2_{L^2},
$$
where the $L^2$-norm of the curvature $F_{\tilde{h}_j}$ is calculated with respect to the metrics $\tilde{h}_j$ and $\omega_j$ using the volume form $\omega_j^n/n!$. Using that $\dbar^{V_j} a_j = F_{h_j}$, the left hand side of this expression vanishes by Chern-Weyl theory, and therefore $\tilde{h}_0$ and $\tilde{h}_1$ are flat. In particular,
$$
F_{\tilde h_j}\wedge \omega^{n-1}=0
$$
and hence, since $h_j$ must be related to $\tilde h_j$ by a holomorphic gauge transformation, $h_j$ are also flat. Therefore, $(g,h_0,h_1)$ solves \eqref{eq:HSabstract} with $\alpha = 0$, and the Bianchi identity reads
$$
dd^c\omega=0.
$$
By \cite{LiYau}, the conformally balanced equation is equivalent to
$$
d^c(\mathrm{log} \|\Omega\|_\omega)-d^*\omega=0
$$
and, by \cite[Proposition 4.11]{GFGM}, this implies
$$
\nabla^B(\|\Omega\|^{-1}_\omega \Omega)=0.
$$
Then, since $-i\rho_{B}(\omega)$ is the induced curvature of $\nabla^B$ on the anti-canonical bundle $K_X^{-1}$, it follows that $\rho_B(\omega)=0$. Thus, applying \cite[Theorem 4.7]{GaJoSt} it follows from the existence of a holomorphic volume form $\Omega$ that $g$ is K\"ahler. 

\end{proof}

\begin{remark}
Notice that the proof of \cite[Theorem 4.7]{GaJoSt}, which we have used to conclude that $g$ is K\"ahler, uses a slope stability argument via exact holomorphic Courant algebroids. Therefore, our proof of Lemma \ref{lem:HSnogo} reduces to Geometric Invariant Theory.
\end{remark}

Our next result provides an obstruction to the existence of non-K\"ahler solutions of the Hull-Strominger system \eqref{eq:HSabstract}. It is a direct consequence of Lemma \ref{lem:HSnogo}.


\begin{proposition}\label{prop:HSnogo}
Let $X$ be a compact K\"ahler manifold endowed with a holomorphic volume form $\Omega$. Let $V_0$ and $V_1$ be holomorphic vector bundles over $X$ satisfying \eqref{eq:c1c2abstract}. Let $(g,h_0,h_1)$ be a solution of the Hull-Strominger system \eqref{eq:HSabstract} with $\alpha \in \RR$ and consider the associated holomorphic orthogonal bundle $(\mathcal{Q},\IP{,})$. Suppose that $(\mathcal{Q},\IP{,})$ is slope polystable with respect to $\mathfrak{b}:= [\|\Omega\|_\omega \omega^{n-1}]$ and furthermore that $\mathfrak{b}$ is the $(n-1)^{\mathrm{th}}$-power of a K\"ahler class, then $g$ is K\"ahler and $h_0$ and $h_1$ are flat.
\end{proposition}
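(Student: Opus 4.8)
The plan is to deduce Proposition \ref{prop:HSnogo} from Lemma \ref{lem:HSnogo} by checking that the hypotheses of the lemma are satisfied in the present situation. The essential new input is that the balanced class is $\mathfrak{b} = [\|\Omega\|_\omega \omega^{n-1}]$, the balanced class \emph{of the solution itself}: since $(g,h_0,h_1)$ solves \eqref{eq:HSabstract}, the third equation $d(\|\Omega\|_\omega \omega^{n-1}) = 0$ guarantees that $\mathfrak{b}$ is indeed a well-defined class in $H^{n-1,n-1}_{BC}(X,\RR)$, so it makes sense to speak of slope $\mathfrak{b}$-polystability. The proposition assumes this polystability directly, so I do not need to establish it.

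Next I would address the extra hypothesis of Lemma \ref{lem:HSnogo} that is not literally assumed in the proposition, namely the existence of K\"ahler classes $[\omega_i] \in H^{1,1}(X,\RR)$ with respect to which $V_i$ is polystable. Here the hypothesis that $\mathfrak{b}$ is the $(n-1)^{\mathrm{th}}$ power of a K\"ahler class does the work: write $\mathfrak{b} = [\omega_{\mathrm{K}}]^{n-1}$ for some K\"ahler form $\omega_{\mathrm{K}}$. The first two equations of the Hull-Strominger system say that $F_{h_0}\wedge \omega^{n-1} = 0$ and $F_{h_1}\wedge\omega^{n-1} = 0$; equivalently $F_{h_j}$ is orthogonal to $\|\Omega\|_\omega\omega^{n-1}$ in the appropriate sense, so $h_j$ is a Hermitian-Einstein metric on $V_j$ with respect to the Gauduchon/balanced metric determined by $\mathfrak{b}$. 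But since $\mathfrak{b} = [\omega_{\mathrm{K}}]^{n-1}$, the degree $\deg_{\mathfrak b}(V_j) = c_1(V_j)\cdot \mathfrak{b}$ coincides with the K\"ahler degree $c_1(V_j)\cdot[\omega_{\mathrm{K}}]^{n-1}$, and by the Donaldson-Uhlenbeck-Yau theorem on the K\"ahler manifold $(X,\omega_{\mathrm{K}})$ the existence of a Hermitian-Einstein metric on $V_j$ with respect to $\mathfrak{b}$ forces $V_j$ to be $[\omega_{\mathrm{K}}]$-polystable. Thus one may take $[\omega_i] = [\omega_{\mathrm{K}}]$ for $i = 0,1$, and all the hypotheses of Lemma \ref{lem:HSnogo} hold with $\mathfrak{b}_0 = \mathfrak{b}$.

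Applying Lemma \ref{lem:HSnogo} then yields immediately that $g$ is K\"ahler and that $h_0$ and $h_1$ are flat, which is exactly the conclusion of Proposition \ref{prop:HSnogo}.

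The step I expect to require the most care is the identification, in the previous paragraph, of Hermitian-Einstein-with-respect-to-$\mathfrak b$ with polystability with respect to an actual K\"ahler class: one must be sure that the balanced metric representing $\mathfrak{b}$ for which $h_j$ is Hermitian-Einstein is compatible with, or can be replaced by, the K\"ahler metric $\omega_{\mathrm{K}}$ when invoking the Hermitian-Yang-Mills correspondence, i.e.\ that the relevant notion of stability depends on $\mathfrak{b}$ only through its cohomology class. Since the slope $\mu_{[\omega_{\mathrm{K}}]}$ for subsheaves depends only on $c_1 \cdot [\omega_{\mathrm{K}}]^{n-1} = c_1 \cdot \mathfrak{b}$, this is a cohomological matter and should go through, but it is the point where the hypothesis ``$\mathfrak{b}$ is the $(n-1)^{\mathrm{th}}$ power of a K\"ahler class'' is genuinely used and therefore deserves an explicit remark rather than being glossed over.
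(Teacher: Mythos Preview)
Your proposal is correct and follows exactly the approach the paper takes: the paper states that the proposition ``is a direct consequence of Lemma \ref{lem:HSnogo}'' and gives no further argument. You have simply filled in the verification that the hypotheses of Lemma \ref{lem:HSnogo} are met, in particular that the Hermitian-Einstein metrics $h_j$ (with respect to the balanced representative of $\mathfrak b$) force $V_j$ to be $[\omega_{\mathrm K}]$-polystable once $\mathfrak b=[\omega_{\mathrm K}]^{n-1}$; your caution about stability depending only on the class $\mathfrak b$ is well placed and is precisely what makes the reduction go through.
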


\begin{remark}
Our previous result applies, in particular, to the solutions of the Hull-Strominger system found recently by Collins, Picard, and Yau in \cite[Section 3.2]{CoPiYau2}. These solutions are on a K\"ahler Calabi-Yau threefold, have $V_0$ isomorphic to $T^{1,0}$, and are constructed such that $\mathfrak{b}$ can be prescribed to be the square of any given K\"ahler class. Given that $T^{1,0}$ has non-trivial Chern classes (e.g., when $X$ is simply connected), Proposition \ref{prop:HSnogo} proves that the associated $(\mathcal{Q},\IP{,})$ is not slope $\mathfrak{b}$-polystable in this case.
\end{remark}

\begin{remark}
Observe that the proof of Proposition \ref{prop:HSnogo} via Lemma \ref{lem:HSnogo} uses crucially the K\"ahler hypothesis of the manifold. This poses the question of whether on a \textit{a priori} general compact complex manifold, a solution to the Hull-Strominger system with $\mathfrak{b}$-polystable orthogonal bundle $(\mathcal{Q},\IP{,})$ must be K\"ahler. Note that $\mathfrak{b}$-polystability implies that $\mathcal{Q}$ holomorphically splits as $T_{1,0}^* \oplus A_P$. This condition can be thought of as an analogue for metrics on Bott-Chern algebroids (see \cite{GFGM}) of the Hermitian symplectic condition for pluriclosed metrics in the Streets-Tian's Conjecture.
\end{remark}

We consider next the special case that the Hermitian-Einstein metric $\mathbf{G}$ on $\mathcal{Q}$ associated to our solution is positive definite (see Proposition \ref{prop:HS-HE}). Without loss of generality, we can assume that $\alpha > 0$. Thus, by construction, the metric $\mathbf{G}$ is positive definite precisely when $\rk V_0 = 0$.
Specifying to the case of complex dimension three, we recover a no-go result for the original Hull-Strominger system  back to the seminal work of Candelas, Horowitz, Strominger, and Witten \cite{CHSW}. This shows the necessity of introducing the connection $\nabla$ for the existence of non-K\"ahler solutions (see Remark \ref{rem:HSabstract}).

\begin{proposition}\label{cor:HSnogo}
Let $X$ be compact complex manifold endowed with a holomorphic volume form $\Omega$. Let $V$ be holomorphic vector bundle over $X$ satisfying $ch_2(V) = 0 \in H^{2,2}_{BC}(X,\RR)$. Let $(g,h)$ be a solution of the system
\begin{equation}\label{eq:HSabstractbis}
\begin{split}
F_{h}\wedge \omega^{n-1} & =0,\\
d(\|\Omega\|_\omega \omega^{n-1}) & = 0,\\
dd^c \omega + \alpha \tr F_{h} \wedge F_{h} & = 0,
\end{split}
\end{equation}
with $\alpha > 0$. Then, $g$ is K\"ahler and $h$ is flat.
\end{proposition}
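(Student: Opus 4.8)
The plan is to realize \eqref{eq:HSabstractbis} as the abstract Hull--Strominger system \eqref{eq:HSabstract} with $V_0=0$ and $V_1=V$, so that the topological constraint \eqref{eq:c1c2abstract} becomes $ch_2(V)=0$ (the hypothesis) and the anomaly equation of \eqref{eq:HSabstract} reduces to the third line of \eqref{eq:HSabstractbis}. Then $(g,h)$ determines, via Proposition \ref{prop:HS-HE}, a holomorphic orthogonal bundle $(\mathcal{Q},\IP{,})$, $\mathcal{Q}\cong TX\otimes\CC\oplus\End V$, carrying a compatible Hermitian metric $\mathbf{G}=\mathrm{diag}(g,-\alpha\tr_V)$ with $F_\mathbf{G}\wedge\omega^{n-1}=0$; since $\rk V_0=0$ and $\alpha>0$, $\mathbf{G}$ is positive definite, as recalled just above. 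Rescaling by $\tilde\omega:=\|\Omega\|_\omega^{1/(n-1)}\omega$, the conformally balanced equation says precisely that $\tilde\omega^{n-1}=\|\Omega\|_\omega\omega^{n-1}$ is $d$-closed, so $\tilde\omega$ is balanced with class $\mathfrak{b}:=[\tilde\omega^{n-1}]=[\|\Omega\|_\omega\omega^{n-1}]$, and $F_\mathbf{G}\wedge\tilde\omega^{n-1}=\|\Omega\|_\omega F_\mathbf{G}\wedge\omega^{n-1}=0$. Thus $\mathbf{G}$ is a positive definite compatible Hermitian--Einstein metric on $(\mathcal{Q},\IP{,})$ relative to $\mathfrak{b}$, and Theorem \ref{t:DUY} shows $(\mathcal{Q},\IP{,})$ is slope $\mathfrak{b}$-polystable. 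As in the proof of Lemma \ref{lem:HSnogo}, the canonical isotropic subsheaf $T^*_{1,0}\overset{\pi^*}{\hookrightarrow}\mathcal{Q}$ has $\mu_\mathfrak{b}(T^*_{1,0})=\tfrac1n c_1(K_X)\cdot\mathfrak{b}=0$ because $\Omega$ trivializes $K_X$, so polystability forces \eqref{eq:holCoustr} to split holomorphically and, specializing the extension-class computation there to $\rk V_0=0$, produces $a\in\Omega^{1,0}(\End V)$ with $\dbar^V a=F_h$.

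The heart of the matter is to deduce $F_h\equiv 0$ without a K\"ahler polarization. The Hermitian--Einstein equation gives $\Lambda_{\tilde\omega}F_h=0$, so the pointwise Lübke-type identity reads $\tr(F_h\wedge F_h)\wedge\tfrac{\tilde\omega^{n-2}}{(n-2)!}=|F_h|^2_{h,\tilde\omega}\tfrac{\tilde\omega^n}{n!}\geqslant 0$ at every point, and the anomaly equation $\alpha\tr(F_h\wedge F_h)=-dd^c\omega$ then yields $dd^c\omega\wedge\tilde\omega^{n-2}\leqslant 0$ pointwise; in particular $\int_X dd^c\omega\wedge\tilde\omega^{n-2}\leqslant 0$, with equality iff $F_h\equiv 0$. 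On the other hand, writing $\omega=f\tilde\omega$ with $f=\|\Omega\|_\omega^{-1/(n-1)}>0$ and using that $\tilde\omega$ is balanced — whence $d\tilde\omega^{n-1}=0$, $\tilde\omega^{n-2}\wedge d\tilde\omega=0=\tilde\omega^{n-2}\wedge d^c\tilde\omega$, and $\partial\tilde\omega$ is primitive — integration by parts gives
\[
\int_X dd^c\omega\wedge\tilde\omega^{n-2}=\int_X f\,dd^c\tilde\omega\wedge\tilde\omega^{n-2}=(n-2)\int_X f\,d^c\tilde\omega\wedge\tilde\omega^{n-3}\wedge d\tilde\omega=2(n-2)!\int_X f\,|\partial\tilde\omega|^2_{\tilde\omega}\,\tfrac{\tilde\omega^n}{n!}\geqslant 0,
\]
using $d^c\tilde\omega\wedge d\tilde\omega\wedge\tilde\omega^{n-3}=-2i\,\partial\tilde\omega\wedge\overline{\partial\tilde\omega}\wedge\tilde\omega^{n-3}$ together with the sign of $i\beta\wedge\bar\beta\wedge\tilde\omega^{n-3}$ on primitive $(2,1)$-forms $\beta$. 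Comparing the two signs forces $F_h\equiv 0$ (so $h$ is flat) and $\partial\tilde\omega\equiv 0$ (so $\tilde\omega$ is K\"ahler). Finally, with $F_h\equiv 0$ the anomaly equation reduces to $dd^c\omega=0$, hence $dd^c(f\tilde\omega)=dd^cf\wedge\tilde\omega=0$, so $\Delta_{\tilde\omega}f=0$ and $f$ is constant, i.e.\ $\omega$ is K\"ahler; alternatively one concludes that $g$ is K\"ahler from the conformally balanced equation and the existence of $\Omega$ exactly as at the end of the proof of Lemma \ref{lem:HSnogo}, via \cite[Prop.~4.11]{GFGM} and \cite[Thm.~4.7]{GaJoSt}. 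The case $n=2$ is immediate, since then $dd^c\omega$ is an exact top-degree form.

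I expect the main obstacle to be exactly the passage from the splitting (equivalently, from $\dbar^V a=F_h$) to $F_h\equiv 0$: without a closed polarization one cannot run the Chern--Weil/stability argument of Lemma \ref{lem:HSnogo}, so the needed positivity must be extracted from the balanced structure of $\tilde\omega$ and its interplay with the pointwise anomaly identity, which requires keeping careful track of the primitivity of $\partial\tilde\omega$ and of the sign conventions in the Lefschetz/Weil identities.
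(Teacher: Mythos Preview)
Your argument is correct, but it proceeds along a genuinely different route from the paper's, and in fact your first paragraph is superfluous for the proof you actually give.

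The paper exploits that $\mathbf{G}$ is \emph{itself} the positive-definite Hermitian--Einstein metric. Thus when $\mu_{\mathfrak{b}}(T^*_{1,0})=0$ forces a splitting, it is an $\mathbf{H}$-orthogonal (i.e.\ $\mathbf{G}$-orthogonal) splitting, not merely a holomorphic one: the second fundamental form
\[
i_W\gamma(V+r)=-i_Wi_V(2i\partial\omega)+2\alpha\,\tr_V(i_WF_h\,r)
\]
of the extension $0\to T^*_{1,0}\to\mathcal{Q}\to A_P\to 0$ vanishes \emph{identically}, and one reads off $\partial\omega=0$ and $F_h=0$ directly from the two components. This is shorter and stays entirely within the slope-stability framework of the section.

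Your approach, by contrast, is a direct Bochner/L\"ubke integral argument that does not need the Donaldson--Uhlenbeck--Yau theorem at all: the pointwise HYM identity $\tr(F_h\wedge F_h)\wedge\tilde\omega^{n-2}\geqslant 0$ combined with the anomaly gives $\int_X dd^c\omega\wedge\tilde\omega^{n-2}\leqslant 0$, while the balanced condition makes $\partial\tilde\omega$ primitive and the Weil--Lefschetz sign forces $\int_X dd^c\omega\wedge\tilde\omega^{n-2}\geqslant 0$. This is closer in spirit to the original physics no-go argument of \cite{CHSW}. Note that the conclusion $\dbar^V a=F_h$ you extract from polystability (your first paragraph) is never used; your ``heart of the matter'' paragraph is self-contained and already yields both $F_h\equiv 0$ and $d\tilde\omega=0$. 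What the paper's method buys is a cleaner link to the GIT narrative; what yours buys is independence from Theorem~\ref{t:DUY} and an argument that works verbatim once one has the conformally balanced and anomaly equations, without constructing $(\mathcal{Q},\IP{,},\mathbf{G})$.
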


\begin{proof}
Consider the holomorphic orthogonal bundle $(\mathcal{Q},\IP{,})$ with the Hermitian-Einstein metric $\mathbf{G}$ associated to the solution $(g,h)$ as in Proposition \ref{prop:HS-HE}. Via the identification
\begin{equation*}
\mathcal{Q} \cong TX \otimes \mathbb{C} \oplus \End V   
\end{equation*}
we have the explicit formula (cf. \eqref{eq:Gexplicit})
$$
\mathbf{G}  =\left(\begin{array}{ccc}
    g & 0 \\
    0 & - \alpha \tr_{V}
    \end{array}\right)
$$
and therefore $\mathbf{G}$ is defines a compatible, positive definite, Hermitian-Einstein metric on $(\mathcal{Q},\IP{,})$ with respect to the balanced metric 
$$
\omega' = \|\Omega\|_\omega^{\frac{1}{n-1}}\omega. 
$$
From Theorem \ref{t:DUY}, $(\mathcal{Q},\langle\cdot,\cdot\rangle)$ is $\mathfrak{b}$-polystable for $\mathfrak{b}=[||\Omega||_{\omega}\omega^{n-1}]$. Hence, $\mu_{\mathfrak{b}}(T^*_{1,0})=\mu_{\mathfrak{b}}(\mathcal{Q})=0$ implies that $\mathcal{Q}\cong T^*_{1,0}\oplus A_P$ holomorphically and metrically with respect to the metric $\mathbf{G}$. This means that the second fundamental form of the extension
$$
0\rightarrow T^*_{1,0}\rightarrow \mathcal{Q}\rightarrow A_P \rightarrow 0
$$
given by $\gamma\in \Omega^{0,1}(\mathrm{Hom}(A_P,T^*_{1,0}))$ as
$$
i_W\gamma(V+r)=-i_Wi_V(2i\partial\omega)+2\alpha \mathrm{tr}_V(F_h r)
$$
must vanish identically. Therefore, the result follows.
\end{proof}

\section{Harmonic metrics, Higgs fields, and stability}\label{sec:Higgs}

\subsection{HyperK\"ahler moment maps}\label{sec:KKmmap}

Let $(X,g)$ be a compact complex manifold of dimension $n$ endowed with a hermitian metric $g$. We assume that $g$ is balanced, that is, $\omega = g(J,)$ satisfies $d \omega^{n-1} = 0$. We fix a smooth complex vector bundle $Q$ over $X$ of degree zero 
$$
c_1(Q) \cdot [\omega^{n-1}] = 0.
$$
We are interested in the geometry of the space of complex connections on $Q$, which we denote by $\cA_Q$. In the application to Section \ref{sec:HarmonicMet}, $Q$ is a complex orthogonal bundle and $\cA_Q$ is replaced by the space of orthogonal complex connections. Nonetheless, the setup discussed here applies with minor modifications and hence we stick to the simpler situation stated above.

The infinite-dimensional space of complex connections $\cA_Q$ is affine, modelled on the complex vector space
$$
\Omega^1(\End \; Q).
$$
It is endowed with a natural complex symplectic structure, defined by
$$
\Omega_\CC(a_1^c,a_2^c) = - \int_X \tr a_1^c \wedge a_2^c \wedge \frac{\omega^{n-1}}{(n-1)!}.
$$
The group of complex gauge transformations $\cG_Q$ of $Q$ acts on $\cA_Q$ by symplectomorphisms and, similarly as in the Atiyah-Bott-Donaldson picture, there is a complex moment map. The proof is an exercise using the balanced condition of the metrics, and is left to the reader.

\begin{lemma}\label{lemApp:Cmmap}
Assume that the Hermitian form $\omega$ is balanced. Then, the $\cG_Q$-action on $(\cA_Q,\Omega_\CC)$ is Hamiltonian with moment map
$$
\IP{\mu_\CC(D),s^c} = - \int_X \tr s^c F_D \wedge \frac{\omega^{n-1}}{(n-1)!},
$$
where $s^c \in \Omega^0(\End Q) \cong \Lie \cG_Q$ and $F_D$ denotes the curvature of $D$.
\end{lemma}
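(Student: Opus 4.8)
The plan is to verify the two defining properties of a moment map: $\cG_Q$-equivariance of the candidate map $\mu_\CC$ and the infinitesimal identity $d\IP{\mu_\CC,s^c} = \iota_{Y_{s^c}}\Omega_\CC$, where $Y_{s^c}$ is the fundamental vector field on $\cA_Q$ generated by $s^c \in \Lie\cG_Q$. Since $\cA_Q$ is an affine space modelled on $\Omega^1(\End Q)$ and $\Omega_\CC$ is translation-invariant (constant coefficients), all the computations reduce to elementary manipulations with the exterior derivative, the trace, and the balanced condition $d\omega^{n-1}=0$.

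First I would compute the fundamental vector field. The $\cG_Q$-action on a complex connection $D$ is by conjugation, so the infinitesimal action of $s^c \in \Omega^0(\End Q)$ sends $D \mapsto D + t\, d_D s^c + O(t^2)$, whence $Y_{s^c}|_D = d_D s^c = ds^c + [D,s^c] \in \Omega^1(\End Q)$, the covariant derivative of $s^c$. Next I would differentiate the function $f_{s^c}(D) := \IP{\mu_\CC(D),s^c} = -\int_X \tr(s^c F_D)\wedge \tfrac{\omega^{n-1}}{(n-1)!}$ in a direction $a^c \in \Omega^1(\End Q)$: using $\tfrac{d}{dt}F_{D+ta^c}\big|_{t=0} = d_D a^c$ one gets $df_{s^c}|_D(a^c) = -\int_X \tr(s^c\, d_D a^c)\wedge \tfrac{\omega^{n-1}}{(n-1)!}$. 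Now integrate by parts: since $d\big(\tr(s^c a^c)\wedge \omega^{n-1}\big) = \tr(d_D s^c \wedge a^c)\wedge\omega^{n-1} + \tr(s^c\, d_D a^c)\wedge\omega^{n-1} + \tr(s^c a^c)\wedge d\omega^{n-1}$ and the manifold is closed with $d\omega^{n-1}=0$, Stokes' theorem gives $\int_X \tr(s^c\, d_D a^c)\wedge\tfrac{\omega^{n-1}}{(n-1)!} = -\int_X \tr(d_D s^c\wedge a^c)\wedge\tfrac{\omega^{n-1}}{(n-1)!}$. Hence $df_{s^c}|_D(a^c) = \int_X \tr(d_D s^c \wedge a^c)\wedge\tfrac{\omega^{n-1}}{(n-1)!}$. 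Comparing with $\Omega_\CC(Y_{s^c},a^c) = -\int_X \tr(d_D s^c \wedge a^c)\wedge\tfrac{\omega^{n-1}}{(n-1)!}$, one sees $df_{s^c} = -\iota_{Y_{s^c}}\Omega_\CC = \iota_{Y_{s^c}}(-\Omega_\CC)$; a sign/convention check (or absorbing the sign into the definition of the moment map, as the statement already does via the overall minus sign) then yields exactly $df_{s^c} = \iota_{Y_{s^c}}\Omega_\CC$.

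Finally I would record equivariance: for $g\in\cG_Q$ one has $\mu_\CC(g\cdot D) = \Ad_g \mu_\CC(D)$, which is immediate from $F_{g\cdot D} = g F_D g^{-1}$ and the conjugation-invariance of the trace, so that $\IP{\mu_\CC(g\cdot D),s^c} = \IP{\mu_\CC(D),\Ad_{g^{-1}}s^c}$; equivalently, since $\cG_Q$ acts preserving $\Omega_\CC$ and the pairing $\IP{,}$ on $\Lie\cG_Q$ is $\Ad$-invariant, the infinitesimal computation above already forces equivariance up to a constant, which vanishes because $f_{s^c}$ is manifestly linear in $s^c$ and $\cG_Q$-covariant. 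I do not expect any genuine obstacle here — the only subtlety is bookkeeping of signs and of the sign conventions for moment maps and for the fundamental vector field — which is why the authors remark that the proof is ``an exercise'' and leave it to the reader; the single essential input is the balanced condition $d\omega^{n-1}=0$, used precisely once, to kill the boundary-type term in the integration by parts.
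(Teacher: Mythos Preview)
Your proposal is correct and is exactly the kind of verification the paper has in mind: the authors do not give a proof at all, stating only that ``the proof is an exercise using the balanced condition of the metrics, and is left to the reader.'' Your computation is the standard Atiyah--Bott style argument, and you correctly isolate the single essential use of $d\omega^{n-1}=0$ in the integration-by-parts step; the only loose end is the sign bookkeeping you flag yourself, which depends on the (unstated) convention for the fundamental vector field and for moment maps, and which can be fixed once a convention is chosen.
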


Observe that the zeros of the complex moment map are given by connections $D \in \cA_Q$ satisfying
$$
F_D \wedge \omega^{n-1} = 0.
$$
One can restrict the Hamiltonian $\cG_Q$-action to the complex subspace $\cA_Q^{1,1} \subset \cA_Q$ given by connections with $F_D^{0,2} = F_D^{2,0} = 0$, obtaining a complex analogue of the Hermitian-Yang-Mills equations, similarly as in Proposition \ref{prop:HS-HE} (cf. \cite{KaledinVerbitsky}).

To introduce the hyperK\"ahler structure on $\cA_Q$ of our interest, following \cite{Hitchin1987} we fix a positive definite hermitian metric $\mathbf{H}$ on $Q$. Then, given $D \in \cA_Q$ there is a unique decomposition
\begin{equation}\label{eq:H-decompositionapp}
D = \nabla^\mathbf{H} + \Psi,
\end{equation}
where $\nabla^\mathbf{H}$ is an $\mathbf{H}$-unitary connection and $\Psi \in i\Omega^{1}(\End_{\mathbf{H}} Q)$, where 
$$
\Omega^{1}(\End_{\mathbf{H}} Q) := \{ a \in \Omega^{1}(\End Q)\; | \; 
a^{*_\mathbf{H}} = - a\}.
$$
This induces an identification
$$
\cA_Q = \cA_{\mathbf{H}} \times i\Omega^{1}(\End_{\mathbf{H}} Q)
$$
and a decomposition
$$
\Omega_\CC = \Omega_\mathbf{I} + i \Omega_\mathbf{J}
$$
where
\begin{align*}
\Omega_\mathbf{I}(a_1^c,a_2^c) & = - \int_X \tr a_1 \wedge a_2 \wedge \frac{\omega^{n-1}}{(n-1)!} -  \int_X \tr \psi_1 \wedge \psi_2 \wedge \frac{\omega^{n-1}}{(n-1)!} \\
\Omega_\mathbf{J}(a_1^c,a_2^c) & =  i \int_X \tr \psi_1 \wedge a_2 \wedge \frac{\omega^{n-1}}{(n-1)!} + i \int_X \tr a_1 \wedge \psi_2 \wedge \frac{\omega^{n-1}}{(n-1)!}
\end{align*}
for $a_j^c = a_j + \psi_j$. From this, using the fact that the base manifold has a complex structure $J$, one can infer a hyperK\"ahler structure with metric
\begin{align*}
g(a^c,a^c) & = - \int_X \tr a \wedge *_\omega a +  \int_X \tr \psi \wedge *_\omega \psi
\end{align*}
and complex structures $\mathbf{I}, \mathbf{J}, \mathbf{K}$, satisfying $\mathbf{I}\mathbf{J}\mathbf{K} = \mathbf{I}^2 = \mathbf{J}^2 = \mathbf{K}^2 = -\Id$, defined by
\begin{align*}
\mathbf{I} a^c = J a - J\psi, \qquad \mathbf{J} a^c = -iJ \psi + i Ja, \qquad \mathbf{K} a^c = i\psi + i a.
\end{align*}
We are interested in the Hamiltonian action of the unitary gauge group $\cG_\mathbf{H} \subset \cG_Q$ for the triple of symplectic structures $\Omega_\mathbf{I}, \Omega_\mathbf{J}, \Omega_\mathbf{K}$, where $\Omega_\mathbf{K} := g(\mathbf{K}\cdot,\cdot)$ is given by
$$
\Omega_\mathbf{K}(a^c_1,a^c_2) = -i\int_X  \tr \psi_1 \wedge J a_2 \wedge \frac{\omega^{n-1}}{(n-1)!} + i  \int_X \tr a_1 \wedge J\psi_2 \wedge \frac{\omega^{n-1}}{(n-1)!}.
$$
\begin{proposition}\label{propApp:HKmmap}
Assume that $\omega$ is balanced. Then, the $\cG_\mathbf{H}$-action on $\cA_Q$ is Hamiltonian for the three symplectic structures $\Omega_\mathbf{I}, \Omega_\mathbf{J}, \Omega_\mathbf{K}$, and there is a hyperK\"ahler moment map
$$
\mu = (\mu_\mathbf{I}, \mu_\mathbf{J}, \mu_\mathbf{K})
$$
where
\begin{align*}
\IP{\mu_\mathbf{I}(D),s} & =  - \int_X \tr s  (F_{\nabla^{\mathbf{H}}} + \tfrac{1}{2}[\Psi \wedge \Psi]) \wedge \frac{\omega^{n-1}}{(n-1)!},\\
\IP{\mu_\mathbf{J}(D),s} & =  i \int_X \tr s  \nabla^{\mathbf{H}} \Psi  \wedge \frac{\omega^{n-1}}{(n-1)!},\\
\IP{\mu_\mathbf{K}(D),s} & =  i \int_X \tr s \nabla^{\mathbf{H}} (J\Psi) \wedge \frac{\omega^{n-1}}{(n-1)!},
\end{align*}
and $s \in \Omega^{0}(\End_{\mathbf{H}} Q) \cong \Lie \cG_\mathbf{H}$.
\end{proposition}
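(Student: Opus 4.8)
The plan is to follow the standard recipe for computing hyperKähler moment maps on spaces of connections, adapting the Hitchin approach to the balanced (non-Kähler) setting. Since we are told in the excerpt that the proof of Lemma \ref{lemApp:Cmmap} "is an exercise using the balanced condition," the work here is essentially the bookkeeping of splitting $\mu_\CC$ into real and imaginary parts under the decomposition $D = \nabla^\mathbf{H} + \Psi$, together with a separate direct computation for $\Omega_\mathbf{K}$, which is not one of the holomorphic components.

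First I would note that $\mu_\mathbf{I}$ and $\mu_\mathbf{J}$ are forced on us: since $\Omega_\CC = \Omega_\mathbf{I} + i\Omega_\mathbf{J}$ and the $\cG_\mathbf{H}$-action is the restriction of the $\cG_Q$-action, the restriction of $\mu_\CC$ to $\Lie\cG_\mathbf{H} = \Omega^0(\End_\mathbf{H} Q)$ decomposes accordingly. So I would take the formula $\IP{\mu_\CC(D),s^c} = -\int_X \tr s^c F_D \wedge \frac{\omega^{n-1}}{(n-1)!}$ from Lemma \ref{lemApp:Cmmap}, substitute the curvature identity $F_D = F_{\nabla^\mathbf{H}} + \nabla^\mathbf{H}\Psi + \tfrac12[\Psi\wedge\Psi]$ (valid for any connection of the form $\nabla^\mathbf{H} + \Psi$), and then, for $s \in \Omega^0(\End_\mathbf{H} Q)$ (so $s^{*_\mathbf{H}} = s$, i.e. $s$ is $\mathbf{H}$-Hermitian), observe that $\tr\big(s(F_{\nabla^\mathbf{H}} + \tfrac12[\Psi\wedge\Psi])\big)$ is the "$\mathbf{H}$-skew-Hermitian-part-paired-with-Hermitian" piece while $\tr(s\,\nabla^\mathbf{H}\Psi)$ carries the complementary reality type; matching this against $\Omega_\mathbf{I} + i\Omega_\mathbf{J}$ yields the stated $\mu_\mathbf{I}$ and $\mu_\mathbf{J}$. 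The only subtlety is tracking where the factor of $i$ lands, since $\Psi \in i\Omega^1(\End_\mathbf{H} Q)$ and $\nabla^\mathbf{H}\Psi$ inherits that; this reproduces exactly the $i$ in front of the $\mu_\mathbf{J}$ integral.

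For $\mu_\mathbf{K}$, I would argue directly: the defining equation of a moment map is $d\IP{\mu_\mathbf{K},s} = \iota_{Y_s}\Omega_\mathbf{K}$, where $Y_s$ is the vector field on $\cA_Q$ generated by $s \in \Lie\cG_\mathbf{H}$, namely $Y_s = -\nabla^\mathbf{H} s$ on the unitary factor (together with the bracket term $[\Psi, s]$ on the Higgs factor, coming from the gauge action on $\Psi$). Plugging $Y_s$ into the given expression for $\Omega_\mathbf{K}$, using that $J$ acts as stated and that $\omega$ is balanced so that $d\omega^{n-1} = 0$ lets me integrate by parts moving $\nabla^\mathbf{H}$ off $s$, I expect to land on $\IP{\mu_\mathbf{K}(D),s} = i\int_X \tr\big(s\,\nabla^\mathbf{H}(J\Psi)\big)\wedge\frac{\omega^{n-1}}{(n-1)!}$, up to checking that the cross-terms involving $[\Psi,s]$ cancel or reassemble correctly. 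One should also check $\cG_\mathbf{H}$-equivariance of all three maps, which is automatic from naturality of the construction.

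**The main obstacle** I anticipate is not conceptual but organizational: keeping straight the three reality conditions ($\mathbf{H}$-Hermitian versus $\mathbf{H}$-skew, and the factors of $i$ introduced by $\Psi \in i\Omega^1(\End_\mathbf{H} Q)$) while integrating by parts against $\omega^{n-1}$, and making sure the integration by parts is legitimate — this is precisely where the balanced hypothesis $d\omega^{n-1} = 0$ is used, exactly as in Lemma \ref{lemApp:Cmmap}, since any term of the form $\int_X d(\tr(\cdots)\wedge\omega^{n-1})$ must vanish and $\tr(\cdots)$ is only a one-form, so $d\omega^{n-1} = 0$ is exactly what is needed to kill the unwanted term. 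A secondary point to verify carefully is that the three 2-forms $\Omega_\mathbf{I}, \Omega_\mathbf{J}, \Omega_\mathbf{K}$ are genuinely closed and compatible with the almost complex structures $\mathbf{I}, \mathbf{J}, \mathbf{K}$ — i.e. that we really have a hyperKähler (not merely hypercomplex-with-compatible-forms) structure — but since $\cA_Q$ is an affine space and all the forms are constant-coefficient in the affine coordinates $(a,\psi)$, closedness is immediate, and the quaternionic relations were already recorded in the excerpt.
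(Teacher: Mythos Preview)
Your approach is essentially the same as the paper's: decompose $F_D = F_{\nabla^\mathbf{H}} + \nabla^\mathbf{H}\Psi + \tfrac12[\Psi\wedge\Psi]$, read off $\mu_\mathbf{I}$ and $\mu_\mathbf{J}$ as the real and imaginary parts of $\mu_\CC$ from Lemma~\ref{lemApp:Cmmap}, and verify $\mu_\mathbf{K}$ directly from the explicit formula for $\Omega_\mathbf{K}$. One small slip to correct as you carry it out: elements $s \in \Omega^0(\End_\mathbf{H} Q) \cong \Lie\cG_\mathbf{H}$ are $\mathbf{H}$-\emph{skew}-Hermitian ($s^{*_\mathbf{H}} = -s$, consistent with the paper's convention $a^{*_\mathbf{H}} = -a$ for $a \in \Omega^1(\End_\mathbf{H} Q)$), not Hermitian, so adjust your reality bookkeeping accordingly.
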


\begin{proof}
Equation \eqref{eq:H-decompositionapp} implies that
\begin{equation}\label{eq:FDdecomp}
F_{\mathbf{G}} = F_{\nabla^{\mathbf{H}}} + \nabla^{\mathbf{H}} \Psi + \tfrac{1}{2}[\Psi \wedge \Psi].
\end{equation}
Then, the formula for $\mu_\mathbf{I}$ and $\mu_\mathbf{J}$ follow from Lemma \ref{lemApp:Cmmap} by taking real and imaginary parts in the formula for $\mu_\CC$. The fact that $\mu_\mathbf{K}$ is a moment map follows easily from the explicit expression for $\Omega_\mathbf{K}$ above. 
\end{proof}

To finish this section, we give a characterization of the hyperK\"ahler moment map equations $\mu(D) = 0$ for a complex connection $D = \nabla^{\mathbf{H}} + \Psi$, given by
\begin{equation}\label{eq:IJKmomentmapApp}
\begin{split}
(F_{\nabla^{\mathbf{H}}} + \tfrac{1}{2}[\Psi \wedge \Psi]) \wedge \omega^{n-1}  & = 0,\\
(\nabla^{\mathbf{H}} \Psi) \wedge \omega^{n-1} & = 0,\\
(\nabla^{\mathbf{H}} J\Psi) \wedge \omega^{n-1} & = 0.\\
\end{split}
\end{equation}
To link with the definition of a harmonic metric in Section \ref{sec:HarmonicMet}, it is convenient to remove our assumption that the Hermitian metric $\omega$ is balanced (in our applications, the Hermitian metric is conformally balanced). 

\begin{lemma}\label{lemma:IJmmapApp}
Let $\omega$ be an arbitrary Hermitian form on $X$. Then, a complex connection $D = \nabla^{\mathbf{H}} + \Psi$ satisfies \eqref{eq:IJKmomentmapApp} if and only if
\begin{equation}\label{eq:IJKharmonicApp}
\begin{split}
(F_{\nabla^{\mathbf{H}}} + \tfrac{1}{2}[\Psi \wedge \Psi]) \wedge \omega^{n-1}  & = 0,\\
(\nabla^{\mathbf{H}})^{*} (J\Psi) - i_{J \theta_\omega^\sharp}\Psi & = 0,\\
(\nabla^{\mathbf{H}})^* \Psi + i_{ \theta_\omega^\sharp}\Psi & = 0.
\end{split}
\end{equation}
where $\theta_\omega = Jd^*\omega$ is the Lee form of $\omega$ and $(\nabla^{\mathbf{H}})^{*}$ denotes the adjoint operator with respect to the metric $g = \omega(\cdot,J\cdot)$.
\end{lemma}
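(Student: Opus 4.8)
The plan is to reduce the statement to a single pointwise identity. The first equations of \eqref{eq:IJKmomentmapApp} and \eqref{eq:IJKharmonicApp} are literally the same, so nothing is needed there. The third equation in each system is obtained from the second by replacing $\Psi$ with $J\Psi$: on the moment-map side this is immediate, and on the harmonic side one uses $J^2 = -\Id$ on $1$-forms together with the elementary identities $(\nabla^\mathbf{H})^*\big(J(J\Psi)\big) = -(\nabla^\mathbf{H})^*\Psi$ and $i_{J\theta_\omega^\sharp}(J\Psi) = i_{\theta_\omega^\sharp}\Psi$, which turn $(\nabla^\mathbf{H})^*(J(J\Psi)) - i_{J\theta_\omega^\sharp}(J\Psi) = 0$ into $(\nabla^\mathbf{H})^*\Psi + i_{\theta_\omega^\sharp}\Psi = 0$. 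Hence it suffices to prove, for an arbitrary $\End Q$-valued $1$-form $\Psi$ and an arbitrary Hermitian form $\omega$, the pointwise formula
\begin{equation*}
(\nabla^\mathbf{H}\Psi) \wedge \frac{\omega^{n-1}}{(n-1)!} \;=\; \Big((\nabla^\mathbf{H})^* (J\Psi) - i_{J \theta_\omega^\sharp}\Psi\Big)\frac{\omega^n}{n!},
\end{equation*}
because then the second equation of \eqref{eq:IJKmomentmapApp} is equivalent to the second of \eqref{eq:IJKharmonicApp}, and substituting $J\Psi$ for $\Psi$ gives the equivalence of the third equations.

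To establish this formula the plan is a Hodge-theoretic computation. First, recall $*_\omega \omega = \omega^{n-1}/(n-1)!$ and the pointwise identity $\alpha \wedge \frac{\omega^{n-1}}{(n-1)!} = - *_\omega(J \alpha)$ for every $1$-form $\alpha$, which one checks in a local unitary coframe; applied componentwise it gives $\Psi \wedge \frac{\omega^{n-1}}{(n-1)!} = -*_\omega(J\Psi)$. Next, apply the Leibniz rule $\nabla^\mathbf{H}(\Psi \wedge \mu) = (\nabla^\mathbf{H}\Psi)\wedge\mu - \Psi \wedge d\mu$ with $\mu = \omega^{n-1}/(n-1)!$, so that $(\nabla^\mathbf{H}\Psi)\wedge \mu = \nabla^\mathbf{H}(\Psi \wedge \mu) + \Psi \wedge d\mu$. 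The first term equals $-\nabla^\mathbf{H}\big(*_\omega(J\Psi)\big) = (\nabla^\mathbf{H})^*(J\Psi)\,\tfrac{\omega^n}{n!}$ by the standard relation $(\nabla^\mathbf{H})^* = -*_\omega \nabla^\mathbf{H} *_\omega$ on $\End Q$-valued $1$-forms (and $**_\omega = \Id$ on $2n$-forms). For the second term, $d\mu = d*_\omega \omega = *_\omega(d^*\omega)$, and combining the pairing identity $\Psi \wedge *_\omega \alpha = \langle \alpha, \Psi\rangle_g\, \tfrac{\omega^n}{n!} = \big(i_{\alpha^\sharp}\Psi\big)\tfrac{\omega^n}{n!}$ with $d^*\omega = -J\theta_\omega$ (just $\theta_\omega = Jd^*\omega$ and $J^2 = -\Id$) and $(J\beta)^\sharp = J(\beta^\sharp)$ gives $\Psi \wedge *_\omega(d^*\omega) = -\big(i_{J\theta_\omega^\sharp}\Psi\big)\tfrac{\omega^n}{n!}$. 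Adding the two contributions yields the displayed formula.

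The one genuinely delicate point is sign bookkeeping: the convention for the action of $J$ on $1$-forms, the compatibility $(J\alpha)^\sharp = J(\alpha^\sharp)$, and the various $*_\omega$, $**_\omega$ and codifferential signs in real dimension $2n$ must all be tracked consistently; each step is routine but error-prone. As a consistency check, if $\omega$ is balanced then $d^*\omega = 0$, hence $\theta_\omega = 0$, and \eqref{eq:IJKharmonicApp} reduces to the moment-map equations \eqref{eq:IJKmomentmapApp}, in agreement with Proposition \ref{propApp:HKmmap}.
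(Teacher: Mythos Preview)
Your proof is correct and follows essentially the same approach as the paper: both hinge on the single identity relating $(\nabla^{\mathbf{H}}\Psi)\wedge\omega^{n-1}$ to $(\nabla^{\mathbf{H}})^*(J\Psi)$ and the Lee-form contraction (this is exactly the paper's formula \eqref{eq:harmonicvsmmap}, rewritten with the volume form instead of the Hodge star). The paper simply states that formula and declares the lemma to follow; you go further and supply a derivation via $*\omega = \omega^{n-1}/(n-1)!$, the Leibniz rule, and the codifferential identity, and you also spell out the reduction from the third pair of equations to the second via $\Psi \mapsto J\Psi$, which the paper leaves implicit.
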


\begin{proof}
The statement follows easily from the formula
\begin{equation}\label{eq:harmonicvsmmap}
(\nabla^{\mathbf{H}})^* (J\Psi) = \tfrac{1}{(n-1)!}* (\nabla^{\mathbf{H}} \Psi)  \wedge \omega^{n-1} + i_{J \theta_\omega^\sharp}\Psi.
\end{equation}
\end{proof}

The third equation in \eqref{eq:IJKharmonicApp}, corresponding to the condition $\mu_\mathbf{K}(D) = 0$ when $\omega$ is conformally balanced, will be taken in Section \ref{sec:HarmonicMet} as the defining equation for our notion of harmonic metric for the Hull-Strominger system.

\subsection{Harmonic metrics}\label{sec:HarmonicMet}

We introduce next our notion of harmonic metric for the Hull-Strominger system, motivated by the hyperK\"ahler moment map construction in the previous section. We fix $(X,\Omega)$ and $V_0, V_1$ as in Section \ref{sec:HS-HE}. Let $(g,h_0,h_1)$ be a solution of the Hull-Strominger system \eqref{eq:HSabstract} and consider the associated holomorphic orthogonal bundle $(\mathcal{Q},\IP{,})$. We are mainly interested in non-K\"ahler solutions, and therefore we will assume that $\alpha > 0$ and $\rk V_0 > 0$ (see Corollary \ref{cor:HSnogo}). Consequently, the generalized Hermitian metric $\mathbf{G}$ associated to our solution will be indefinite (see \eqref{eq:Gexplicit}). 

The fundamental object in our development is the orthogonal connection $D^\mathbf{G}$ on  $(\mathcal{Q},\IP{,})$ in Proposition \ref{prop:HS-HE}. Explicitly, in matrix notation in terms of the identification 
\begin{equation}\label{eq:Qascomplexified}
\mathcal{Q} \cong TX \otimes \mathbb{C} \oplus \End V_0 \oplus \End V_1,
\end{equation}
for any vector field $V$ the operator $D^{\mathbf{G}}_V$ is given by (see \cite[Proposition 3.4]{GFGM})
\begin{equation}\label{eq:DGexp}
D^{\mathbf{G}}_V = \left(\begin{array}{ccc}
    \nabla^{-}_V & g^{-1}\alpha\tr (i_VF_{h_0}\cdot) &  -g^{-1}\alpha\tr (i_VF_{h_1} \cdot) \\
     - F_{h_0}(V,\cdot)  & d^{h_0}_V & 0\\
    - F_{h_1}(V,\cdot) & 0 & d^{h_1}_V
    \end{array}\right),
\end{equation}
where $\nabla^-$ denotes the $\CC$-linear extension of the $g$-compatible connection with totally skew-symmetric torsion $d^c \omega$, that is,
$$
\nabla^- = \nabla + \tfrac{1}{2}g^{-1}d^c\omega
$$
for $\nabla$ the Levi-Civita connection of $g$. Given a compatible (positive definite) Hermitian metric $\mathbf{H}$ on the holomorphic orthogonal bundle $(\mathcal{Q},\IP{,})$ there exists a unique decomposition
\begin{equation}\label{eq:H-decomposition}
D^\mathbf{G} = \nabla^\mathbf{H} + \Psi,
\end{equation}
where $\nabla^\mathbf{H}$ is an $\mathbf{H}$-unitary connection and $\Psi \in \Omega^{1}(\End \mathcal{Q})$ satisfies 
$$
\Psi^{*_\mathbf{H}} = \Psi.
$$

\begin{lemma}\label{lemma:IJmmap}
The pair $(\nabla^{\mathbf{H}},\Psi)$ in \eqref{eq:H-decomposition} satisfies the equations
\begin{equation}\label{eq:IJmomentmap}
\begin{split}
(F_{\nabla^{\mathbf{H}}} + \tfrac{1}{2}[\Psi \wedge \Psi]) \wedge \omega^{n-1}  & = 0,\\
(\nabla^{\mathbf{H}})^{*} (J\Psi) - i_{J \theta_\omega^\sharp}\Psi & = 0,\\
F_{\nabla^{\mathbf{H}}}^{0,2} + (\nabla^{\mathbf{H}})^{0,1} \Psi^{0,1} + \tfrac{1}{2}[\Psi^{0,1} \wedge \Psi^{0,1}]  & = 0.
\end{split}
\end{equation}
where $(\nabla^{\mathbf{H}})^{*}$ denotes the adjoint operator with respect to the metric $g$.
\end{lemma}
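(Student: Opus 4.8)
The plan is to show that $(\nabla^{\mathbf{H}},\Psi)$ obtained from the decomposition \eqref{eq:H-decomposition} satisfies the system \eqref{eq:IJmomentmap} by reducing each equation to a corresponding statement about the Chern connection $D^{\mathbf{G}}$, which we already understand via Proposition \ref{prop:HS-HE}. The key structural fact is that $D^{\mathbf{G}}$ is simultaneously the Chern connection of the indefinite metric $\mathbf{G}$ on the \emph{holomorphic} orthogonal bundle $(\mathcal{Q},\IP{,},\dbar^{\mathcal{Q}})$, so it is compatible with $\IP{,}$, has $(0,1)$-part equal to $\dbar^{\mathcal{Q}}$, and satisfies the Hermitian-Einstein equation \eqref{eq:HE}. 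The decomposition \eqref{eq:H-decomposition} is exactly the one from Section \ref{sec:KKmmap} (with $D = D^{\mathbf{G}}$ and $\mathbf{H}$ playing the role of the reference positive-definite metric), so the curvature identity \eqref{eq:FDdecomp} gives
\begin{equation*}
F_{\mathbf{G}} = F_{\nabla^{\mathbf{H}}} + \nabla^{\mathbf{H}}\Psi + \tfrac{1}{2}[\Psi \wedge \Psi],
\end{equation*}
and we will feed the known properties of $F_{\mathbf{G}}$ through this identity.

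\emph{First equation.} Decompose \eqref{eq:FDdecomp} into the parts that are self-adjoint versus anti-self-adjoint with respect to $\mathbf{H}$: since $\nabla^{\mathbf{H}}$ is $\mathbf{H}$-unitary, $F_{\nabla^{\mathbf{H}}}$ is anti-self-adjoint, and since $\Psi$ is self-adjoint, both $\tfrac{1}{2}[\Psi\wedge\Psi]$ and $F_{\nabla^{\mathbf{H}}}$ are anti-self-adjoint while $\nabla^{\mathbf{H}}\Psi$ is self-adjoint. Wedging \eqref{eq:FDdecomp} with $\omega^{n-1}$ and using \eqref{eq:HE} ($F_{\mathbf{G}}\wedge\omega^{n-1}=0$), the anti-self-adjoint part yields precisely $(F_{\nabla^{\mathbf{H}}} + \tfrac12[\Psi\wedge\Psi])\wedge\omega^{n-1}=0$, which is the first line of \eqref{eq:IJmomentmap}; the self-adjoint part gives $(\nabla^{\mathbf{H}}\Psi)\wedge\omega^{n-1}=0$, an auxiliary identity we will reuse.

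\emph{Second equation.} Apply the algebraic identity \eqref{eq:harmonicvsmmap} from Lemma \ref{lemma:IJmmapApp}, namely
\begin{equation*}
(\nabla^{\mathbf{H}})^*(J\Psi) = \tfrac{1}{(n-1)!} *\big((\nabla^{\mathbf{H}}\Psi)\wedge\omega^{n-1}\big) + i_{J\theta_\omega^\sharp}\Psi,
\end{equation*}
which holds for an arbitrary Hermitian form $\omega$. Since we just showed $(\nabla^{\mathbf{H}}\Psi)\wedge\omega^{n-1}=0$, the first term drops out and we obtain $(\nabla^{\mathbf{H}})^*(J\Psi) - i_{J\theta_\omega^\sharp}\Psi = 0$, the second line of \eqref{eq:IJmomentmap}.

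\emph{Third equation.} Take the $(0,2)$-part of the curvature identity \eqref{eq:FDdecomp}. Because $D^{\mathbf{G}}$ is the Chern connection of a holomorphic structure, $(D^{\mathbf{G}})^{0,1}=\dbar^{\mathcal{Q}}$ is integrable, so $F_{\mathbf{G}}^{0,2}=0$. On the other hand $(D^{\mathbf{G}})^{0,1} = (\nabla^{\mathbf{H}})^{0,1} + \Psi^{0,1}$, and the $(0,2)$-component of $(D^{\mathbf{G}})^{0,1}\circ(D^{\mathbf{G}})^{0,1}$ expands as $F_{\nabla^{\mathbf{H}}}^{0,2} + (\nabla^{\mathbf{H}})^{0,1}\Psi^{0,1} + \tfrac12[\Psi^{0,1}\wedge\Psi^{0,1}]$. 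Setting this equal to $F_{\mathbf{G}}^{0,2}=0$ gives the third line of \eqref{eq:IJmomentmap}. The main point to be careful about is bookkeeping: one must verify that the self-adjoint/anti-self-adjoint splitting of $F_{\mathbf{G}}$ is compatible with wedging by $\omega^{n-1}$ (it is, since $\omega$ is real of type $(1,1)$ and $*_{\mathbf{H}}$ acts only on the $\End\mathcal{Q}$ factor), and that the decomposition \eqref{eq:H-decomposition} is genuinely the specialization of \eqref{eq:H-decompositionapp}. No single step is a serious obstacle; the only mild subtlety is that $\mathbf{G}$ itself is indefinite, so one should phrase the argument using the auxiliary positive-definite $\mathbf{H}$ throughout rather than $\mathbf{G}$, and check that self-adjointness of $\Psi$ with respect to $\mathbf{H}$ is exactly what \eqref{eq:H-decomposition} provides.
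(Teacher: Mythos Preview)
Your proof is correct and follows essentially the same route as the paper: decompose $F_{\mathbf{G}}$ via \eqref{eq:FDdecomp} into its $\mathbf{H}$-Hermitian and $\mathbf{H}$-skew-Hermitian parts, combine the Hermitian-Einstein condition \eqref{eq:HE} with the identity \eqref{eq:harmonicvsmmap} from Lemma \ref{lemma:IJmmapApp} to obtain the first two lines, and read off the third line from $F_{\mathbf{G}}^{0,2}=0$. The paper's proof is simply a terser version of exactly this argument.
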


\begin{proof}
By Proposition \ref{prop:HS-HE}, $\mathbf{G}$ is Hermitian-Einstein. Decomposing $F_{\mathbf{G}}$ into its Hermitian and skew-Hermitian components with respect to $\mathbf{H}$ as in the proof of Proposition \ref{propApp:HKmmap}, the proof follows easily from Equation \eqref{eq:FDdecomp} and the proof of Lemma \ref{lemma:IJmmapApp}.
\end{proof}

Given that the Hermitian form $\omega$ is conformally balanced, the first and second equations in \eqref{eq:IJmomentmap} correspond to the zeros of an infinite-dimensional complex moment map in the space of complex orthogonal connections on $(\mathcal{Q},\IP{,})$ (see Section \ref{sec:KKmmap}). Similarly as in the theory of \emph{Higgs bundles} \cite{Hitchin1987}, it is therefore very natural to supplement these conditions with an additional equation arising from a hyperK\"ahler moment map (see Proposition \ref{propApp:HKmmap} and Lemma \ref{lemma:IJmmapApp}).

\begin{definition}\label{def:harmonic}
Let $(\mathcal{Q},\IP{,},D)$ be a holomorphic orthogonal bundle over a Hermitian manifold $(X,g)$ endowed with an orthogonal connection $D$ such that $D^{0,1} = \dbar_{\mathcal{Q}}$ and $F_D \wedge \omega^{n-1} = 0$. We say that compatible Hermitian metric $\mathbf{H}$ on $(\mathcal{Q},\IP{,})$ is \emph{harmonic} if
\begin{equation}\label{eq:Kmomentmap}
(\nabla^{\mathbf{H}})^* \Psi + i_{ \theta_\omega^\sharp}\Psi = 0,
\end{equation}
where we use the decomposition \eqref{eq:H-decomposition}.
\end{definition}


\begin{remark}\label{rem:Kaledin}
Notice that the connection $D$ in the previous definition is, in particular, a \emph{non-Hermitian-Yang-Mills connection} in the sense of Kaledin and Verbitsky \cite{KaledinVerbitsky}. It would be interesting to find further relations between our picture and the theory proposed in this reference.
\end{remark}

Our stability condition for the Hull-Strominger system is related to the existence of a harmonic metric on $(\mathcal{Q},\IP{,},D^{\mathbf{G}})$. We postpone its study to Section \ref{sec:higgsstb}. Here, we propose to address the following problem:

\begin{question}\label{questionharmonic}
Let $(g,h_0,h_1)$ be a solution of the Hull-Strominger system \eqref{eq:HSabstract} and consider the associated holomorphic orthogonal bundle $(\mathcal{Q},\IP{,})$ and orthogonal connection $D^\mathbf{G}$ as in Proposition \ref{prop:HS-HE}. Does $(\mathcal{Q},\IP{,},D^\mathbf{G})$ admit a harmonic metric $\mathbf{H}$?
\end{question}

In order to provide a non-trivial example of harmonic metric for $(\mathcal{Q},\IP{,},D^\mathbf{G})$ in Section \ref{subsec:exhar}, we calculate next equation \eqref{eq:Kmomentmap} for a particular choice of Hermitian metric. Via the identification \eqref{eq:Qascomplexified}, we define (cf. \eqref{eq:Gexplicit})
\begin{equation}\label{eq:H}
\mathbf{H}  =\left(\begin{array}{ccc}
    g & 0 & 0 \\
    0 & - \alpha \tr_{V_0} & 0\\
    0 & 0 &  -\alpha \tr_{V_1}
    \end{array}\right).
\end{equation}
It is not difficult to see that \eqref{eq:H} defines a compatible Hermitian metric on $(\mathcal{Q},\IP{,})$. In our next result we calculate the decomposition \eqref{eq:H-decomposition} for this particular choice of Hermitian metric.

\begin{lemma}\label{lemma:H-decompositionexp}
Let $(\nabla^\mathbf{H},\Psi)$ be the pair in \eqref{eq:H-decomposition} associated to the compatible Hermitian metric \eqref{eq:H}. Then, in matrix notation in terms of the identification \eqref{eq:Qascomplexified}, one has
\begin{align*}
\nabla^{\mathbf{H}} =\left(\begin{array}{ccc}
    \nabla^{-} & 0 &  \mathbb{F}_{h_1}^\dagger \\
      0 & d^{h_0} & 0\\
    - \mathbb{F}_{h_1} & 0 & d^{h_1}
    \end{array}\right), \qquad
\Psi = \left(\begin{array}{ccc}
    0 & \mathbb{F}_{h_0}^\dagger & 0 \\
     - \mathbb{F}_{h_0} & 0 & 0\\
    0 & 0 & 0
    \end{array}\right).
\end{align*}
where $\mathbb{F}_{h_j} \in \Omega^1(\Hom(TX \otimes \mathbb{C},\End V_j))$ are the $\Hom(TX \otimes \CC,\End V_j)$-valued $1$-forms defined by
\begin{equation}\label{eq:Foperator}
(i_V\mathbb{F}_{h_j})(W) := F_{h_j}(V,W)
\end{equation}
and $\mathbb{F}_{h_j}^\dagger$ denote the corresponding adjoints with respect to $\mathbf{G}$, that is,
$$
i_V\mathbb{F}_{h_0}^\dagger (r_0) =  g^{-1}\alpha\tr (i_VF_{h_0}r_0), \qquad i_V\mathbb{F}_{h_1}^\dagger (r_1) =  - g^{-1}\alpha\tr (i_VF_{h_1}r_1)
$$
\end{lemma}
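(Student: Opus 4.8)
The plan is to compute the unique decomposition $D^{\mathbf{G}} = \nabla^{\mathbf{H}} + \Psi$ directly from the explicit matrix formula \eqref{eq:DGexp} for $D^{\mathbf{G}}$, using the fact that the decomposition is characterized by $\nabla^{\mathbf{H}}$ being $\mathbf{H}$-unitary and $\Psi$ being $\mathbf{H}$-selfadjoint. First I would observe that, since $D^{\mathbf{G}}$ is the Chern connection of $\mathbf{G}$ and the two metrics $\mathbf{G}$ and $\mathbf{H}$ differ only by the sign on the $\End V_0$ block (that is, $\mathbf{H} = \mathbf{G} \cdot \rho$ where $\rho$ acts as $+1$ on $TX\otimes\CC$ and $\End V_1$ and as $-1$ on $\End V_0$, or rather $\mathbf{H}$ is positive definite while $\mathbf{G}$ has signature given by $-\alpha\tr_{V_0}$), the operation "$\mathbf{H}$-adjoint" differs from "$\mathbf{G}$-adjoint" only by conjugation with this block sign. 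Then the $\mathbf{H}$-selfadjoint part of $D^{\mathbf{G}} - d$, where $d$ is the obvious reference connection, is what gets collected into $\Psi$.

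Concretely, I would split each off-diagonal entry of \eqref{eq:DGexp} into its $\mathbf{H}$-symmetric and $\mathbf{H}$-antisymmetric pieces. The $(TX\otimes\CC, \End V_0)$ pair of entries, namely $g^{-1}\alpha\tr(i_V F_{h_0}\cdot)$ and $-F_{h_0}(V,\cdot)$, are adjoint to each other with respect to $\mathbf{G}$ by the very definition of $\mathbb{F}_{h_0}^\dagger$ in \eqref{eq:Foperator}, hence they assemble into an $\mathbf{H}$-selfadjoint endomorphism-valued $1$-form: this is exactly the $\Psi$ claimed, with the block $\begin{pmatrix} 0 & \mathbb{F}_{h_0}^\dagger \\ -\mathbb{F}_{h_0} & 0\end{pmatrix}$ being $\mathbf{H}$-selfadjoint because the sign flip on $\End V_0$ in passing from $\mathbf{G}$ to $\mathbf{H}$ turns the $\mathbf{G}$-skew pairing of these two blocks into an $\mathbf{H}$-symmetric one. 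Dually, the $(TX\otimes\CC,\End V_1)$ pair of entries $-g^{-1}\alpha\tr(i_V F_{h_1}\cdot) = i_V\mathbb{F}_{h_1}^\dagger$ and $-F_{h_1}(V,\cdot) = -i_V\mathbb{F}_{h_1}$ are already $\mathbf{G}$-adjoint with the correct signs to be $\mathbf{H}$-skew (no sign flip occurs on the $\End V_1$ block), so they belong to $\nabla^{\mathbf{H}}$, giving the off-diagonal entries $\mathbb{F}_{h_1}^\dagger$ and $-\mathbb{F}_{h_1}$ there. The diagonal blocks $\nabla^-$, $d^{h_0}$, $d^{h_1}$ are $\mathbf{H}$-unitary: $d^{h_j}$ by construction (Chern connections are unitary), and $\nabla^-$ because it is metric-compatible with $g$ (the torsion modification $\tfrac12 g^{-1}d^c\omega$ preserves $g$-orthogonality), so they all land in $\nabla^{\mathbf{H}}$.

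Finally I would verify the two defining properties: that the $\Psi$ so defined is genuinely $\mathbf{H}$-selfadjoint and that $\nabla^{\mathbf{H}} := D^{\mathbf{G}} - \Psi$ is $\mathbf{H}$-unitary, i.e. $d\,\mathbf{H}(s,t) = \mathbf{H}(\nabla^{\mathbf{H}} s, t) + \mathbf{H}(s, \nabla^{\mathbf{H}} t)$; uniqueness of the decomposition \eqref{eq:H-decomposition} then forces these to be the asserted formulas. The one bookkeeping point requiring care — and the step I expect to be the main potential source of error rather than a deep obstacle — is tracking the signs coming from the indefinite metric $\mathbf{G}$ versus the positive $\mathbf{H}$, in particular checking that the adjoint $\mathbb{F}_{h_0}^\dagger$ taken with respect to $\mathbf{G}$ (as displayed in the statement) is the correct object so that $\begin{pmatrix} 0 & \mathbb{F}_{h_0}^\dagger \\ -\mathbb{F}_{h_0} & 0 \end{pmatrix}$ is $\mathbf{H}$-selfadjoint while $\begin{pmatrix} 0 & \mathbb{F}_{h_1}^\dagger \\ -\mathbb{F}_{h_1} & 0 \end{pmatrix}$ is $\mathbf{H}$-skew; this is where the distinction between $V_0$ (whose $\mathbf{G}$-metric $\alpha\tr_{V_0}$ is positive after the sign convention $\alpha>0$ but enters $\mathbf{G}$ with the opposite sign needed for indefiniteness) and $V_1$ does all the work. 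Once the signs are pinned down, the claimed formulas for $\nabla^{\mathbf{H}}$ and $\Psi$ follow immediately by matching blocks.
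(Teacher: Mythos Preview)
Your proposal is correct and follows essentially the same route as the paper: the paper's proof simply notes that the displayed $\nabla^{\mathbf{H}}$ is $\mathbf{H}$-unitary and the displayed $\Psi$ satisfies $\Psi^{*_\mathbf{H}} = \Psi$, then invokes the explicit formula \eqref{eq:DGexp} and uniqueness of the decomposition. Your more detailed bookkeeping of how the sign flip on the $\End V_0$ block converts $\mathbf{G}$-skewness into $\mathbf{H}$-selfadjointness (and leaves the $\End V_1$ block $\mathbf{H}$-skew) is exactly the content the paper leaves implicit in ``it is not difficult to see''.
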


\begin{proof}
It is not difficult to see that $\nabla^{\mathbf{H}}$, as defined above, is $\mathbf{H}$-unitary and furthermore that $\Psi^{*_\mathbf{H}} = \Psi$. The statement follows from formula \eqref{eq:DGexp}.
\end{proof}

The desired characterization of the harmonicity of \eqref{eq:H} is as follows:

\begin{lemma}\label{lemma:Kmmap}
Let $(\nabla^\mathbf{H},\Psi)$ be the pair in \eqref{eq:H-decomposition} associated to the compatible Hermitian metric $\mathbf{H}$ defined by \eqref{eq:H}. Then,
\begin{equation}\label{eq:Kmomentmapexp}
(\nabla^{\mathbf{H}})^* \Psi + i_{ \theta_\omega^\sharp}\Psi = \left(\begin{array}{ccc}
   0 & - \mathbb{U}^\dagger &  0 \\
\mathbb{U}  & 0 & - \mathbb{V}^\dagger\\
    0 & \mathbb{V} &  0
    \end{array}\right).
\end{equation}
where
\begin{align*}
\mathbb{U}(V) & =-i_{V}\Bigg{(}d^{h_0 *} F_{h_0} + i_{\theta_\omega^\sharp}F_{h_0} + *(F_{h_0}\wedge * d^c\omega) \Bigg{)},\\
\mathbb{V}(r_0) & = \alpha F_{h_1}(e_i,e_j)\tr(F_{h_0}(e_i,e_j)r_0) 
\end{align*}
for any choice of $g$-orthonormal frame  $e_1, \ldots, e_{2n}$ of $T$, and $\mathbb{U}^\dagger$ and $\mathbb{V}^\dagger$ denote the corresponding adjoints with respect to $\mathbf{G}$. Consequently, $\mathbf{H}$ is harmonic if and only if the following conditions are satisfied
\begin{equation}\label{eq:harmonicHexp}
F_{h_0}\wedge * d^c\omega = 0, \qquad \alpha F_{h_1}(e_i,e_j)\tr(F_{h_0}(e_i,e_j) \cdot) = 0.
\end{equation}
\end{lemma}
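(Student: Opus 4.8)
The strategy is to compute $(\nabla^{\mathbf{H}})^*\Psi + i_{\theta_\omega^\sharp}\Psi$ explicitly in matrix form using the block decomposition of $\nabla^{\mathbf{H}}$ and $\Psi$ from Lemma \ref{lemma:H-decompositionexp}, and then read off the harmonicity condition by setting the result to zero. Since $\Psi$ has only two nonzero off-diagonal blocks, $\mathbb{F}_{h_0}^\dagger$ in the $(TX\otimes\CC,\End V_0)$-slot and $-\mathbb{F}_{h_0}$ in the $(\End V_0, TX\otimes\CC)$-slot, the operator $(\nabla^{\mathbf{H}})^*\Psi$ will be assembled from the covariant derivatives of these blocks using the connections $\nabla^-$ on $TX\otimes\CC$, $d^{h_0}$ on $\End V_0$, and $d^{h_1}$ on $\End V_1$, together with the off-diagonal term $\mathbb{F}_{h_1}$ of $\nabla^{\mathbf{H}}$ which couples the $TX\otimes\CC$ and $\End V_1$ blocks. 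First I would write $(\nabla^{\mathbf{H}})^*\Psi = -*\nabla^{\mathbf{H}}*\Psi$ (up to the usual sign/degree conventions on a real $2n$-manifold acting on a $1$-form) and expand $\nabla^{\mathbf{H}}(*\Psi)$ block by block, being careful that $\nabla^{\mathbf{H}}$ acting on an $\End\mathcal{Q}$-valued form uses the adjoint (commutator) action, so the off-diagonal $\mathbb{F}_{h_1}$ blocks of $\nabla^{\mathbf{H}}$ contribute a bracket term that sends the $\mathbb{F}_{h_0}$ block of $\Psi$ into the $(\End V_0,\End V_1)$ and $(\End V_1,\End V_0)$ slots. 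This is exactly the origin of the $\mathbb{V}$ and $\mathbb{V}^\dagger$ entries.

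For the $(\End V_0, TX\otimes\CC)$ entry $\mathbb{U}$: the term $(\nabla^{\mathbf{H}})^*\Psi$ contributes $d^{h_0*}$ applied to (the appropriate contraction of) $F_{h_0}$ viewed through the operator $\mathbb{F}_{h_0}$, which after unwinding definition \eqref{eq:Foperator} gives $d^{h_0*}F_{h_0}$ contracted against $V$; the torsion term $\tfrac12 g^{-1}d^c\omega$ inside $\nabla^- = \nabla + \tfrac12 g^{-1}d^c\omega$ produces, upon taking the adjoint, the term $*(F_{h_0}\wedge *d^c\omega)$; and the zeroth-order piece $i_{\theta_\omega^\sharp}\Psi$ contributes $i_{\theta_\omega^\sharp}F_{h_0}$. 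Collecting these three pieces yields precisely the stated formula for $\mathbb{U}(V)$. For the $(\End V_1,\End V_0)$ entry, the bracket of $\mathbb{F}_{h_1}$ (from $\nabla^{\mathbf{H}}$) with $\mathbb{F}_{h_0}$ (from $\Psi$), contracted with $\omega^{n-1}$ via the Hodge star, produces $\alpha F_{h_1}(e_i,e_j)\tr(F_{h_0}(e_i,e_j)r_0)$ after choosing a $g$-orthonormal frame; this is $\mathbb{V}(r_0)$. The diagonal blocks and the $(\End V_1,TX\otimes\CC)$, $(TX\otimes\CC,\End V_1)$ blocks all vanish because $\Psi$ is supported away from $\End V_1$ and $\nabla^{\mathbf{H}}$ preserves the relevant splitting there; the remaining two off-diagonal entries $-\mathbb{U}^\dagger$ and $-\mathbb{V}^\dagger$ are forced by the fact that $(\nabla^{\mathbf{H}})^*\Psi + i_{\theta_\omega^\sharp}\Psi$ is $\mathbf{H}$-self-adjoint (being a real expression in a self-adjoint $\Psi$), so it suffices to compute the lower-triangular part and transpose.

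The main obstacle is the careful bookkeeping of the torsion term and the Lee-form term to obtain the combination $d^{h_0*}F_{h_0} + i_{\theta_\omega^\sharp}F_{h_0} + *(F_{h_0}\wedge *d^c\omega)$ with the correct signs and normalizations: one must track how $\nabla^- = \nabla + \tfrac12 g^{-1}d^c\omega$ interacts with the Hodge star on a non-Kähler (merely conformally balanced) Hermitian manifold, and in particular reconcile the adjoint of the torsion contribution with the identity $\theta_\omega = Jd^*\omega$ used implicitly in Lemma \ref{lemma:IJmmapApp}. Once \eqref{eq:Kmomentmapexp} is established, the final equivalence is immediate: $\mathbf{H}$ is harmonic iff \eqref{eq:Kmomentmapexp} vanishes, iff $\mathbb{U}=0$ and $\mathbb{V}=0$. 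For $\mathbb{V}=0$ this is directly the second condition in \eqref{eq:harmonicHexp}. For $\mathbb{U}=0$, note that $d^{h_0*}F_{h_0} + i_{\theta_\omega^\sharp}F_{h_0}$ vanishes identically: this is precisely the non-Hermitian-Yang-Mills / Hermitian-Einstein content of $F_{h_0}\wedge\omega^{n-1}=0$ on a conformally balanced manifold (the second equation in \eqref{eq:IJKharmonicApp} applied to the Chern connection $d^{h_0}$, whose Higgs field part is zero), so $\mathbb{U}=0$ reduces to $*(F_{h_0}\wedge *d^c\omega)=0$, i.e. $F_{h_0}\wedge *d^c\omega = 0$, which is the first condition in \eqref{eq:harmonicHexp}.
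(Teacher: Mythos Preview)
Your overall strategy matches the paper's: compute $(\nabla^{\mathbf{H}})^*\Psi + i_{\theta_\omega^\sharp}\Psi$ block by block using the explicit form of $\nabla^{\mathbf{H}}$ and $\Psi$ from Lemma \ref{lemma:H-decompositionexp}, exploit $\Psi^{*_{\mathbf{H}}}=\Psi$ to compute only half the entries, and then simplify $\mathbb{U}=0$ using the Hermitian--Einstein equation for $h_0$. The paper in fact carries out the adjoint directly via $(\nabla^{\mathbf{H}})^*\Psi(V) = -i_{e_i}(\nabla^{\mathbf{H},g}_{e_i}\Psi)(V)$ in an orthonormal frame rather than through $-*\nabla^{\mathbf{H}}*$, but this is cosmetic.

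There is, however, a genuine gap in your final step. You assert that
\[
d^{h_0*}F_{h_0} + i_{\theta_\omega^\sharp}F_{h_0} = 0
\]
follows from $F_{h_0}\wedge\omega^{n-1}=0$, justifying this by ``the second equation in \eqref{eq:IJKharmonicApp} applied to the Chern connection $d^{h_0}$, whose Higgs field part is zero''. But with $\Psi=0$ that equation is vacuous and yields no information. On a merely conformally balanced Hermitian manifold the claimed vanishing is false in general: the correct identity, which the paper imports from \cite[Lemma 4.3]{GFGM}, is
\[
d^{h_0*}F_{h_0} + i_{\theta_\omega^\sharp}F_{h_0} - *(F_{h_0}\wedge *d^c\omega) = 0,
\]
i.e.\ the first two terms equal the third rather than vanishing. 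Substituting this into $\mathbb{U}$ gives $\mathbb{U}(V) = -2\,i_V\!\big(*(F_{h_0}\wedge *d^c\omega)\big)$, and only then does $\mathbb{U}=0$ reduce to $F_{h_0}\wedge *d^c\omega = 0$. So your conclusion is right, but the route to it passes through an incorrect intermediate identity; you need the torsion-corrected codifferential formula, not the naive one.
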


\begin{proof}
By the identity $\Psi^{*_\mathbf{H}} = \Psi$, it suffices to calculate $\mathbb{U}$ and $\mathbb{V}^\dagger$. For this, we compute
\begin{align*}
(\nabla^{\mathbf{H}})^* \Psi(V) & =  - i_{e_i} (\nabla^{\mathbf{H,g}}_{e_i} \Psi)(V)\\
& = - (\nabla^{\mathbf{H}}_{e_i} (i_{e_i}\Psi(V))  - i_{e_i}\Psi(\nabla^{\mathbf{H}}_{e_i} V) - i_{\nabla_{e_i}e_i} \Psi(V))\\
& = d^{h_0}_{e_i}(F_{h_0}(e_i,V)) - F_{h_0}(e_i,\nabla^-_{e_i}V) - F_{h_0}(\nabla_{e_i}e_i,V) \\
& = - i_V d^{h_0 *} F_{h_0} - \tfrac{1}{2} F_{h_0}(e_i,g^{-1}i_V i_{e_i} d^c\omega)\\
& = - i_V (d^{h_0 *} F_{h_0} + * (F_{h_0} \wedge * d^c \omega)),\\
(\nabla^{\mathbf{H}})^* \Psi(r_1) & = i_{e_i}\Psi(\nabla^{\mathbf{H}}_{e_i} r_1) = \alpha F_{h_0}(e_i,e_j)\tr(F_{h_1}(e_i,e_j) r_1).
\end{align*}
Formula \eqref{eq:Kmomentmapexp} follows now from the explicit formula for $\Psi$ in Lemma \ref{lemma:H-decompositionexp}. The last part of the statement follows from \cite[Lemma 4.3]{GFGM}, which proves that the Hermitian-Einstein equation for $h_0$
$$
F_{h_0}\wedge \omega^{n-1}=0
$$
implies, in particular,
$$
d^{h_0 *} F_{h_0} + i_{\theta_\omega^\sharp}F_{h_0} - *(F_{h_0}\wedge * d^c\omega)=0.
$$
\end{proof}

\begin{remark}\label{rem:harmonicHexp}
Geometrically, the condition \eqref{eq:harmonicHexp} means that the two-form components of $F_{h_0}$ are orthogonal to the two-form components of the torsion $g^{-1}d^c\omega$ and also to the two-form components of the curvature $F_{h_1}$.
\end{remark}

\subsection{Stability and Higgs fields}\label{sec:higgsstb}

The moment map constructions in \cite{Waldram,GaRuTi3} suggest that the Hull-Strominger system is related to a stability condition in the sense of Geometric Invariant Theory. As we have seen in Proposition \ref{prop:HSnogo}, the naive guess of considering slope polystability of the orthogonal bundle $(\mathcal{Q},\IP{,})$ with respect to the balanced class of the solution does not work. We propose next a refined stability condition based on the existence of harmonic metrics for the Hull-Strominger system. Even though our picture is mostly conjectural, we expect that this stability condition will lead us to new obstructions to the existence of solutions in future studies.

In order to relate the existence of a harmonic metric in the sense of Definition \ref{def:harmonic} with a numerical stability condition, we introduce the following technical definition.

\begin{definition}
Let $X$ be a complex manifold, $\mathcal{Q}$ a holomorphic vector bundle over $X$, and $\mathcal{F}\subset \mathcal{Q}$ a coherent subsheaf  of $\mathcal{O}_X$-modules with singularity set $S\subset X$, \textit{i.e.} $S$ is minimal such that $\mathcal{F}|_{X\backslash S}$ is locally free. Then, given a (smooth complex) connection $D$ on $\mathcal{Q}$, we say that $\mathcal{F}$ is preserved by $D$ if
$$
D(\mathcal{F}|_{X\backslash S}) \subset \Omega^1(X\backslash S, \mathcal{F}|_{X\backslash S}). 
$$
\end{definition}

\begin{remark}
Observe that any coherent subsheaf of a vector bundle is in particular torsion-free, so the notions of degree and slope introduced in Section \ref{sec:Higgs} apply.
\end{remark}

The stability condition of our interest, is for tuples $(\mathcal{Q},\IP{,},D)$, where $(\mathcal{Q},\IP{,})$ is a holomorphic orthogonal bundle and $D$ is a (smooth) orthogonal connection such that $D^{0,1} = \dbar_{\mathcal{Q}}$, as follows (cf. \cite[Definition 8.3]{KaledinVerbitsky}).

\begin{definition}\label{d:HKstab}
Let $(X,g)$ be a compact complex manifold $X$ endowed with a balanced Hermitian metric $g$ with balanced class $\mathfrak{b} \in H^{n-1,n-1}_{BC}(X,\mathbb{R})$. Let $(\mathcal{Q},\IP{,},D)$ be a holomorphic orthogonal bundle over $X$ endowed with an orthogonal connection $D$ such that $D^{0,1} = \dbar_{\mathcal{Q}}$. We say that $(\mathcal{Q},\IP{,},D)$ is
\begin{enumerate}

\item  \emph{slope $\mathfrak{b}$-semistable} if for any isotropic coherent subsheaf $\mathcal{F} \subset \mathcal{Q}$ that is preserved by $D$ one has
$$
\mu_{\mathfrak{b}}(\mathcal{F}) \; \leqslant \; 0,
$$

\item  \emph{slope $\mathfrak{b}$-stable} if for any proper isotropic coherent subsheaf $\mathcal{F} \subset \mathcal{Q}$ that is preserved by $D$ one has
$$
\mu_{\mathfrak{b}}(\mathcal{F}) \; < \; 0,
$$

\item \emph{slope $\mathfrak{b}$-polystable} if it is slope $\mathfrak{b}$-semistable and whenever $\mathcal{F} \subset \mathcal{Q}$ is a isotropic coherent subsheaf that is preserved by $D$
with $\mu_{\mathfrak{b}}(\mathcal{F}) = 0$, there is a coisotropic subsheaf $\mathcal{W} \subset \mathcal{Q}$ that is $D$-preserved and
$$
\mathcal{Q} = \mathcal{W} \oplus \mathcal{F}.
$$

\end{enumerate}
\end{definition}

The relation between the existence of harmonic metrics, in the sense of Definition \ref{def:harmonic}, and slope stability is given in our next result.

\begin{proposition}\label{prop:harmHKstab}
Let $(X,g)$ be a compact complex manifold $X$ endowed with a balanced Hermitian metric $g$ with balanced class $\mathfrak{b} \in H^{n-1,n-1}_{BC}(X,\mathbb{R})$. Let $(\mathcal{Q},\IP{,},D)$ be a holomorphic orthogonal bundle over $X$ endowed with an orthogonal connection $D$ such that $D^{0,1} = \dbar_{\mathcal{Q}}$ and satisfying
$$
F_D\wedge \omega^{n-1}=0.
$$
Assume that $(\mathcal{Q},\IP{,},D)$ admits a harmonic metric $\mathbf{H}$. Then, $(\mathcal{Q},\IP{,},D)$ is slope $\mathfrak{b}$-polystable.
\end{proposition}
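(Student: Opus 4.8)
The plan is to mimic the classical argument that a Hermitian-Einstein metric forces polystability, but carried out at the level of the unitary connection $\nabla^{\mathbf{H}}$ obtained from the decomposition \eqref{eq:H-decomposition}, with the Higgs field $\Psi$ incorporated as a perturbation controlled exactly by the harmonic equation. First I would fix a $D$-preserved isotropic coherent subsheaf $\mathcal{F}\subset\mathcal{Q}$ with singularity set $S$, and on $X\setminus S$ introduce the $\mathbf{H}$-orthogonal projection $\pi=\pi_{\mathcal{F}}\in\Omega^0(\operatorname{End}_{\mathbf{H}}\mathcal{Q})$, which is a smooth idempotent with $\pi^{*_{\mathbf{H}}}=\pi$. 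The key algebraic point is that, because $D$ preserves $\mathcal{F}$, one has $(1-\pi)\,(D\pi)\,(1-\pi)=0$ and $\pi\,(D^{0,1}\pi)\,\pi = 0$, so the second fundamental form $\beta := (1-\pi)(D\pi)$ of the sub-Higgs-bundle decomposes into a $(1,0)$ and a $(0,1)$ part governed respectively by $\nabla^{\mathbf{H}}\pi$ and $\Psi$. I would then run the Chern-Weyl / Gauss-Codazzi computation for the degree: pairing $F_D$ against $\pi$ and $\frac{\omega^{n-1}}{(n-1)!}$ and using $F_D\wedge\omega^{n-1}=0$ together with $d\omega^{n-1}=0$ (the balanced condition) yields

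\begin{equation*}
2\pi\,\mu_{\mathfrak{b}}(\mathcal{F}) \;=\; -\int_{X\setminus S}\!\Big(\,|\nabla^{\mathbf{H}}\pi|^2 \;-\; |\,[\Psi,\pi]\,|^2_{\text{suitable}}\,\Big)\,\frac{\omega^n}{n!}\;+\;(\text{boundary terms near }S),
\end{equation*}

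where the sign discrepancy between the connection term and the Higgs term is precisely what the harmonic equation \eqref{eq:Kmomentmap} must be used to convert into a genuine sign.

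The decisive step is to exploit the harmonic equation $(\nabla^{\mathbf{H}})^*\Psi + i_{\theta_\omega^\sharp}\Psi = 0$ to show that $\operatorname{tr}(\pi\,[\Psi\wedge\Psi])\wedge\omega^{n-1}$ integrates against $\pi$ in a way that, combined with the $F_{\nabla^{\mathbf{H}}}$-term from \eqref{eq:FDdecomp}, produces a manifestly non-positive expression for $\mu_{\mathfrak{b}}(\mathcal{F})$. Concretely I would substitute $F_D = F_{\nabla^{\mathbf{H}}} + \nabla^{\mathbf{H}}\Psi + \tfrac12[\Psi\wedge\Psi]$, note that $\operatorname{tr}(\pi\,\nabla^{\mathbf{H}}\Psi)\wedge\omega^{n-1}$ can be integrated by parts — here the Lee-form term $i_{\theta_\omega^\sharp}$ appears exactly because $\omega$ is only conformally balanced in the genuinely interesting case, but in Proposition \ref{prop:harmHKstab} we have assumed $g$ balanced, so $\theta_\omega = 0$ and the harmonic equation reads simply $(\nabla^{\mathbf{H}})^*\Psi = 0$, which kills the cross term — and then combine $|\nabla^{\mathbf{H}}\pi|^2$ with the Higgs contribution. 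Because $\pi$ is $\mathbf{H}$-self-adjoint and $\Psi$ is $\mathbf{H}$-self-adjoint, the relevant norms pair up with the correct signs and one obtains $\mu_{\mathfrak{b}}(\mathcal{F})\leqslant 0$, i.e. semistability.

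For the polystable refinement, one argues that equality $\mu_{\mathfrak{b}}(\mathcal{F})=0$ forces every non-negative integrand above to vanish pointwise on $X\setminus S$: thus $\nabla^{\mathbf{H}}\pi = 0$ and the Higgs field anticommutes appropriately with $\pi$, so $\pi$ extends smoothly across $S$ (by a removable-singularity argument for bounded $\nabla^{\mathbf{H}}$-parallel sections, as in Uhlenbeck-Yau), is $\nabla^{\mathbf{H}}$-parallel and $D$-parallel, and hence $\mathcal{W}:=\operatorname{Im}(1-\pi)$ is a $D$-preserved holomorphic subbundle with $\mathcal{Q}=\mathcal{W}\oplus\mathcal{F}$; the coisotropy of $\mathcal{W}$ follows since $\mathcal{F}$ isotropic and $\mathbf{H}=\IP{\cdot,\sigma\cdot}$ compatible force $\sigma\mathcal{F}$ to be $\mathbf{H}$-orthogonal to $\mathcal{F}$, identifying $\mathcal{W}$ up to the pairing with $\mathcal{F}^\perp\supset\mathcal{F}$. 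The main obstacle I anticipate is the analytic control near the singularity set $S$ of the coherent sheaf: one must justify the integration-by-parts identities and the Chern-Weyl degree formula on the open manifold $X\setminus S$ and control the boundary contributions, which is exactly where the Uhlenbeck-Yau machinery (currents, capacities, the fact that $S$ has complex codimension $\geqslant 2$ after modification) has to be invoked; handling the Higgs field $\Psi$ — which is merely $L^2$ a priori along $\mathcal{F}$ — in these estimates is the part requiring genuine care rather than bookkeeping. A clean alternative would be to reduce to the Kähler case handled in \cite{PSZ} after the conformal rescaling $\omega' = \|\Omega\|_\omega^{1/(n-1)}\omega$ when $\mathfrak{b}$ happens to come from a Kähler class, but in general I would carry out the self-contained argument above.
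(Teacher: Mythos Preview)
Your outline works with the unitary/self-adjoint decomposition $D=\nabla^{\mathbf{H}}+\Psi$, whereas the paper instead uses the Chern/Higgs decomposition $D=D^{\mathbf{H}}+\phi$ with $\phi\in\Omega^{1,0}(\End\mathcal{Q})$ (Lemma~\ref{lemma:IJKmmapHiggs}). The point is that harmonicity is \emph{equivalent} to the single equation $(F_{\mathbf{H}}+\tfrac12[\phi\wedge\phi^{*_{\mathbf{H}}}])\wedge\omega^{n-1}=0$, which is a twisted Hermitian--Einstein equation; the whole proof then becomes the standard Donaldson--Uhlenbeck--Yau restriction argument. Writing $D^{\mathbf{H}}$ and $\phi$ in block form with respect to $F\oplus F^{\perp_{\mathbf{H}}}$, the fact that $D$ preserves $\mathcal{F}$ forces the lower--left block of $\phi$ to equal $\beta^{*_{\mathbf{H}}}$, where $\beta\in\Omega^{0,1}(\Hom(E,F))$ is the second fundamental form of $D^{\mathbf{H}}$. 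Restricting the harmonic equation to $F$ then yields $F_{D_F^{\mathbf{H}}}-\tfrac12\beta\wedge\beta^{*_{\mathbf{H}}}+\tfrac12\theta\wedge\theta^{*_{\mathbf{H}}}+\tfrac12[\phi_F\wedge\phi_F^{*_{\mathbf{H}}}]$ vanishing against $\omega^{n-1}$; both off--diagonal contributions $\beta,\theta$ have manifestly definite sign after tracing and integrating, and equality forces $\beta=\theta=0$, giving the $D$-preserved holomorphic complement.

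By contrast, in your $(\nabla^{\mathbf{H}},\Psi)$ framework the Gauss--Codazzi second fundamental form of the \emph{unitary} connection $\nabla^{\mathbf{H}}$ has both $(1,0)$ and $(0,1)$ components, whose contributions to $\tr(\pi F_{\nabla^{\mathbf{H}}})\wedge\omega^{n-1}$ carry opposite signs; this is the ``sign discrepancy'' you flag, and your proposed fix --- integrating $\tr(\pi\,\nabla^{\mathbf{H}}\Psi)$ by parts using $(\nabla^{\mathbf{H}})^*\Psi=0$ --- does not by itself produce a sum of squares (note that $(\nabla^{\mathbf{H}}\Psi)\wedge\omega^{n-1}=0$ already holds by $\mu_{\mathbf{J}}=0$, so pairing with $\pi$ gives a tautology). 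One can of course translate back to $D^{\mathbf{H}},\phi$ via $\nabla^{\mathbf{H}}=D^{\mathbf{H}}+\tfrac12(\phi-\phi^{*_{\mathbf{H}}})$, $\Psi=\tfrac12(\phi+\phi^{*_{\mathbf{H}}})$, but then you are doing the paper's computation in disguise. On the analytic side, the paper avoids the Uhlenbeck--Yau current machinery you anticipate: it first replaces $\mathcal{F}$ by a reflexive sheaf (so $\operatorname{codim}S\geq 3$), observes that $\tr F_{D_F^{\mathbf{H}}}$ extends smoothly across $S$ as a representative of $-2\pi i\,c_1(\det\mathcal{F})$, and in the equality case extends the splitting across $S$ by normality of reflexive $\Hom$-sheaves.
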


In order to prove Proposition \ref{prop:harmHKstab}, we consider the following decomposition of our connection $D$. Given a compatible Hermitian metric $\mathbf{H}$ on $(\mathcal{Q},\IP{,})$ we can uniquely write 
\begin{equation}\label{eq:C-decomposition}
D = D^\mathbf{H} + \phi,
\end{equation}
where $D^{\mathbf{H}}$ denotes the Chern connection of $\mathbf{H}$ and $\phi$ is a \emph{Higgs field}
$$
\phi \in \Omega^{1,0}(\End \mathcal{Q}).
$$

\begin{lemma}\label{lemma:IJKmmapHiggs}
Let $(\mathcal{Q},\IP{,},D)$ be a holomorphic orthogonal bundle over $X$ endowed with an orthogonal connection $D$ such that $D^{0,1} = \dbar_{\mathcal{Q}}$ and satisfying
$$
F_D\wedge \omega^{n-1}=0.
$$
Then, the pair $(D^\mathbf{H},\phi)$ in \eqref{eq:C-decomposition} satisfies the equations
\begin{equation}\label{eq:IJmomentmapHiggs}
\begin{split}
(F_{\mathbf{H}} + \tfrac{1}{2}\dbar_{\mathcal{Q}} \phi - \tfrac{1}{2}\partial^{\mathbf{H}} \phi^{*_\mathbf{H}}) \wedge \omega^{n-1}  & = 0,\\
(\dbar_{\mathcal{Q}} \phi + \partial^{\mathbf{H}} \phi^{*_\mathbf{H}}) \wedge \omega^{n-1} & = 0,\\
\partial^{\mathbf{H}} \phi + \tfrac{1}{2}[\phi \wedge \phi]  & = 0,
\end{split}
\end{equation}
where $F_{\mathbf{H}}$ denotes the Chern curvature of $\mathbf{H}$. Futhermore, $\mathbf{H}$ is harmonic if and only if
\begin{equation}\label{eq:KmomentmapHiggs}
(F_{\mathbf{H}} + \tfrac{1}{2}[\phi \wedge \phi^{*_\mathbf{H}}]) \wedge \omega^{n-1} = 0.
\end{equation}
\end{lemma}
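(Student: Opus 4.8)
The plan is to transport everything to the Chern decomposition $D = D^\mathbf{H} + \phi$ of \eqref{eq:C-decomposition} and to read off the three equations, together with the harmonicity criterion, from the single curvature identity $F_D = F_\mathbf{H} + D^\mathbf{H}\phi + \tfrac12[\phi\wedge\phi]$ and the hypothesis $F_D\wedge\omega^{n-1}=0$, passing to the Hitchin decomposition \eqref{eq:H-decomposition} only where needed. I would begin by recording the dictionary between the two decompositions: since $D^\mathbf{H}$ and $\nabla^\mathbf{H}$ both have $(0,1)$-part $\dbar_\mathcal{Q}$, while $\Psi^{*_\mathbf{H}}=\Psi$ and $\nabla^\mathbf{H}-D^\mathbf{H}$ is $\mathbf{H}$-skew, comparing types in $\nabla^\mathbf{H}+\Psi = D^\mathbf{H}+\phi$ forces
$$\phi = 2\Psi^{1,0},\qquad \phi^{*_\mathbf{H}} = 2\Psi^{0,1},\qquad \Psi = \tfrac12(\phi+\phi^{*_\mathbf{H}}),\qquad \nabla^\mathbf{H}=D^\mathbf{H}+\tfrac12(\phi-\phi^{*_\mathbf{H}}),$$
and in particular $J\Psi = \tfrac{i}{2}(\phi-\phi^{*_\mathbf{H}})$, the datum that enters the harmonicity equation.

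For the three equations, I would expand $F_D = F_\mathbf{H} + D^\mathbf{H}\phi + \tfrac12[\phi\wedge\phi]$ using $D^\mathbf{H}=\partial^\mathbf{H}+\dbar_\mathcal{Q}$ and the fact that $F_\mathbf{H}$ is of type $(1,1)$: the bidegree components are $F_D^{2,0}=\partial^\mathbf{H}\phi+\tfrac12[\phi\wedge\phi]$, $F_D^{1,1}=F_\mathbf{H}+\dbar_\mathcal{Q}\phi$ and $F_D^{0,2}=0$. The $(0,2)$-part vanishes because $D^{0,1}=\dbar_\mathcal{Q}$ squares to zero; the $(2,0)$-part vanishes because $F_D$ is of type $(1,1)$ ($D$ being non-Hermitian Yang--Mills in the sense of Kaledin--Verbitsky, cf.\ Remark \ref{rem:Kaledin}) — this is the third equation $\partial^\mathbf{H}\phi+\tfrac12[\phi\wedge\phi]=0$. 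Wedging $F_D$ with $\omega^{n-1}$, which kills the pure $(2,0)$ and $(0,2)$ pieces, gives $(F_\mathbf{H}+\dbar_\mathcal{Q}\phi)\wedge\omega^{n-1}=0$; applying the pointwise $\mathbf{H}$-adjoint — which commutes with $\Lambda$ and satisfies $F_\mathbf{H}^{*_\mathbf{H}}=-F_\mathbf{H}$, $(\dbar_\mathcal{Q}\phi)^{*_\mathbf{H}}=\partial^\mathbf{H}\phi^{*_\mathbf{H}}$ and $(\phi\wedge\phi)^{*_\mathbf{H}}=-\phi^{*_\mathbf{H}}\wedge\phi^{*_\mathbf{H}}$ — gives also $(F_\mathbf{H}-\partial^\mathbf{H}\phi^{*_\mathbf{H}})\wedge\omega^{n-1}=0$. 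The half-sum and half-difference of these two identities are exactly the first and second equations of \eqref{eq:IJmomentmapHiggs}. (Equivalently, $F_\mathbf{H}+\tfrac12\dbar_\mathcal{Q}\phi-\tfrac12\partial^\mathbf{H}\phi^{*_\mathbf{H}}$ and $\tfrac12(\dbar_\mathcal{Q}\phi+\partial^\mathbf{H}\phi^{*_\mathbf{H}})$ are the $(1,1)$-parts of the $\mathbf{H}$-skew and $\mathbf{H}$-Hermitian components of $F_D$, cf.\ the proof of Lemma \ref{lemma:IJmmap}, and $\Lambda$ respects that splitting.)

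For the characterization of harmonicity, I would start from the definition $(\nabla^\mathbf{H})^*\Psi+i_{\theta_\omega^\sharp}\Psi=0$, which by the $\mu_\mathbf{K}$-part of Lemma \ref{lemma:IJmmapApp} (the analogue of \eqref{eq:harmonicvsmmap} with $\Psi$ replaced by $J\Psi$) is equivalent to $(\nabla^\mathbf{H}(J\Psi))\wedge\omega^{n-1}=0$. Substituting $J\Psi=\tfrac{i}{2}(\phi-\phi^{*_\mathbf{H}})$ and $\nabla^\mathbf{H}=D^\mathbf{H}+\tfrac12(\phi-\phi^{*_\mathbf{H}})$ and expanding, one gets $\nabla^\mathbf{H}(J\Psi)=\tfrac{i}{2}\left(D^\mathbf{H}(\phi-\phi^{*_\mathbf{H}})+\tfrac12[(\phi-\phi^{*_\mathbf{H}})\wedge(\phi-\phi^{*_\mathbf{H}})]\right)$; only the $(1,1)$-part survives the wedge with $\omega^{n-1}$, giving
$$i\,(\nabla^\mathbf{H}(J\Psi))\wedge\omega^{n-1}=\tfrac12(\partial^\mathbf{H}\phi^{*_\mathbf{H}}-\dbar_\mathcal{Q}\phi+[\phi\wedge\phi^{*_\mathbf{H}}])\wedge\omega^{n-1}.$$
Using the first equation of \eqref{eq:IJmomentmapHiggs}, already established, to replace $\tfrac12(\dbar_\mathcal{Q}\phi-\partial^\mathbf{H}\phi^{*_\mathbf{H}})\wedge\omega^{n-1}$ by $-F_\mathbf{H}\wedge\omega^{n-1}$, the right-hand side becomes $(F_\mathbf{H}+\tfrac12[\phi\wedge\phi^{*_\mathbf{H}}])\wedge\omega^{n-1}$. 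Hence $\mathbf{H}$ is harmonic if and only if \eqref{eq:KmomentmapHiggs} holds.

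I expect the real work to be the sign and constant bookkeeping: tracking $\mathbf{H}$-adjoints through wedge products, the action of $J$ on the two bidegrees, and the precise normalisation in the passage from the codifferential $(\nabla^\mathbf{H})^*$ to $\Lambda\nabla^\mathbf{H}$ (formula \eqref{eq:harmonicvsmmap}) — routine but error-prone. The one conceptual point to handle with care is that the vanishing of $F_D^{2,0}$, equivalently of $\partial^\mathbf{H}\phi+\tfrac12[\phi\wedge\phi]$, genuinely uses that $D$ has curvature of type $(1,1)$ — which holds for $D=D^\mathbf{G}$ by Proposition \ref{prop:HS-HE} — and is not a consequence of $F_D\wedge\omega^{n-1}=0$ alone.
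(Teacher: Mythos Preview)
Your proof is correct and follows the same route as the paper: establish the dictionary $\nabla^\mathbf{H} = D^\mathbf{H} + \tfrac12(\phi-\phi^{*_\mathbf{H}})$, $\Psi = \tfrac12(\phi+\phi^{*_\mathbf{H}})$, read the first two equations off the $\mathbf{H}$-skew/Hermitian split of $F_D\wedge\omega^{n-1}=0$ (the paper packages this as an appeal to Lemma~\ref{lemma:IJmmap}, you do it directly), and for harmonicity compute $(\nabla^\mathbf{H}(J\Psi))\wedge\omega^{n-1}$ in the $\phi$-variables and simplify via the first equation --- the paper records exactly your intermediate formula $(\nabla^{\mathbf{H}} J \Psi)\wedge \omega^{n-1} = \tfrac{i}{2}(- \dbar_{\mathcal{Q}} \phi + \partial^{\mathbf{H}} \phi^{*_\mathbf{H}} + [\phi \wedge \phi^{*_\mathbf{H}}])\wedge \omega^{n-1}$. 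Your observation that the third equation amounts to $F_D^{2,0}=0$ and so needs $F_D$ of type $(1,1)$ --- automatic for $D=D^\mathbf{G}$ but not literally among the stated hypotheses --- is a subtlety the paper's proof leaves implicit in its appeal to the Hull--Strominger context of Lemma~\ref{lemma:IJmmap}.
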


\begin{proof}
Taking the $\mathbf{H}$-unitary part in the expression \eqref{eq:C-decomposition}, one can easily see that
$$
\nabla^{\mathbf{H}} = D^{\mathbf{H}} + \tfrac{1}{2}(\phi - \phi^{*_\mathbf{H}}), \qquad \Psi = \tfrac{1}{2}(\phi + \phi^{*_\mathbf{H}}).
$$
The first part of the statement follows from Lemma \ref{lemma:IJmmap}. As for the second part, we combine \eqref{eq:harmonicvsmmap} with
$$
(\nabla^{\mathbf{H}} J \Psi)  \wedge \omega^{n-1} = \tfrac{i}{2}(- \dbar_{\mathcal{Q}} \phi + \partial^{\mathbf{H}} \phi^{*_\mathbf{H}} + [\phi \wedge \phi^{*_\mathbf{H}}]) \wedge \omega^{n-1}.
$$
\end{proof}

We give next the proof of Proposition \ref{prop:harmHKstab}.

\begin{proof}[Proof of Proposition \ref{prop:harmHKstab}]
Let $\mathbf{H}$ be a harmonic metric for $D$. Let $\mathcal{F}\subset \mathcal{Q}$ be an isotropic subsheaf preserved by $D$. By \cite[Ch. V, Proposition 7.6]{Kobayashi}, there exists a reflexive subsheaf $\mathcal{F}_1\subset \mathcal{Q}$ such that $\mathcal{F} \subset \mathcal{F}_1$, $\mathcal{F}_1/\mathcal{F}$ is a torsion sheaf, and
$$
\mu_{\mathfrak{b}_0}(\mathcal{F}) \leqslant \mu_{\mathfrak{b}_0}(\mathcal{F}_1).
$$
Since the singular (analytic) set $S\subset X$ of $\mathcal{F}$ has $\mathrm{codim}\hspace{1mm} S\geq 2$, it follows by a density argument that $\mathcal{F}_1$ is also preserved by $D$. Hence, it suffices to assume that $\mathcal{F}$ is reflexive. 

In that case, there exists an analytic set $S\subset X$ of $\mathrm{codim}\hspace{1mm} S\geq 3$ and a holomorphic vector bundle $F$ defined on $X\backslash S$ such that $\mathcal{F}|_{X\backslash S}\cong \mathcal{O}(F)$. Denote by $E=\mathcal{Q}|_{X\backslash S}/F$. Using $\mathbf{H}$ we can make a smooth identification of $E$ and $F^{\perp_{\mathbf{H}}}$ on $X\backslash S$. In the splitting $\mathcal{Q}|_{X\backslash S} = F\oplus F^{\perp_{\mathbf{H}}}$ we have
$$
D^{\mathbf{H}}|_{X\backslash S}=\left(\begin{array}{c c}
D_{F}^{\mathbf{H}} & \beta\\
-\beta^{*_{\mathbf{H}}} & D_{E}^{\mathbf{H}}\end{array}\right)
$$
for $D_{F}^{\mathbf{H}}$, $D_{E}^{\mathbf{H}}$ the restricted Chern connections of $F$ and $E$ and some $\beta\in \Omega^{0,1}(\mathrm{Hom}(E,F))$. Similarly, for the (restricted) Higgs field we have
$$
\phi=\left(\begin{array}{c c} 
\phi_F & \theta\\
\beta^{*_{\mathbf{H}}} & \phi_{E}\end{array}\right) \hspace{2mm} , \hspace{2mm} \phi^{*_{\mathbf{H}}}=\left(\begin{array}{c c} 
\phi_F^{*_{\mathbf{H}}} & \beta\\
\theta^{*_{\mathbf{H}}} & \phi_{E}^{*_{\mathbf{H}}}\end{array}\right)
$$
since we assumed that $\mathcal{F}$ is $D$-preserved. Now, by Lemma \ref{lemma:IJKmmapHiggs},
$$
\left(F_{\mathbf{H}}+\tfrac{1}{2}[\phi\wedge \phi^{*_{\mathbf{H}}}]\right)\wedge \omega^{n-1}=0.
$$ 
and therefore
$$
0 = \big(F_{\mathbf{H}}+\tfrac{1}{2}[\phi\wedge \phi^{*_{\mathbf{H}}}]\big)|_F \wedge \omega^{n-1} = (F_{D_F^{\mathbf{H}}}-\tfrac{1}{2}\beta\wedge \beta^{*_{\mathbf{H}}}+\tfrac{1}{2}[\phi_F\wedge \phi_F^{*_{\mathbf{H}}}]+\tfrac{1}{2}\theta\wedge \theta^{*_{\mathbf{H}}}) \wedge \omega^{n-1}.
$$
Note that $\mathrm{tr} \hspace{0.5mm} F_{D^{\mathbf{H}}_F}$ is the restriction to $X \backslash S$ of a (smooth) representative of $-2\pi i c_1(\mathrm{det} \hspace{0.5mm} \mathcal{F})$). Then, taking traces in the previous expression and integrating over $X$ we get
$$
c_1(\mathcal{F})\cdot \mathfrak{b} +\tfrac{(n-1)!}{2\pi}\int_{X} \tfrac{1}{2}\mathrm{tr}_{F}(\beta\wedge (* \beta)^{*_{\mathbf{H}}})+\tfrac{1}{2} \mathrm{tr}_{F}(\theta\wedge (*\theta)^{*_{\mathbf{H}}}) = 0,
$$
where we used that $\mathrm{tr}_{F}[\phi_F\wedge \phi_F^{*_{\mathbf{H}}}]=0$. Note that the integral in the previous expression is nonnegative, since for any $\varphi\in \mathrm{Hom}(E,F)$
\begin{align*}
\mathrm{tr}_{F}(\varphi\circ \varphi^{*_{\mathbf{H}}})= &\mathrm{tr}_{F\oplus E}\left(\left(\begin{array}{c c}
0 & \varphi\\
0 & 0\end{array}\right)\left(\begin{array}{c c}
0 & 0\\
\varphi^{*_{\mathbf{H}}} & 0\end{array}\right)\right) \geq 0.
\end{align*}
Thus, it follows that $\mu_{\mathfrak{b}}(\mathcal{F})\leq \mu_{\mathfrak{b}}(\mathcal{Q})=0$ and hence $\mathcal{Q}$ is semistable. 

In case of equality, one has $\beta, \theta =0$ and we get a holomorphic splitting $\mathcal{Q}|_{X\backslash S}=\mathcal{F}|_{X\backslash S}\oplus \mathcal{Q}/\mathcal{F}|_{X\backslash S}$ which is furthermore preserved by $D$. Then, since $\mathcal{F}$ is reflexive, so are $\Hom(\mathcal{Q},\mathcal{F})$ and $\Hom(\mathcal{F},\mathcal{F})$; in particular they are normal. Then we can extend uniquely the projection map $r: \mathcal{Q}|_{X\backslash S}\rightarrow \mathcal{F}|_{X\backslash S}$ to $X$. Moreover, the composition with $j: \mathcal{F}\rightarrow \mathcal{Q}$ is the unique extension to $X$ of $\mathrm{Id}_{\mathcal{F}|_{X\backslash S}}$ and hence $r$ is a retraction for the exact sequence
$$
0\rightarrow \mathcal{F}\rightarrow \mathcal{Q}\rightarrow \mathcal{Q}/\mathcal{F}\rightarrow 0.
$$
We conclude that the sequence is split. Then, $\mathcal{F}$ and $\mathcal{Q}/\mathcal{F}$ are locally free and $D$-preserved, and the proof follows.

\end{proof}

We are ready to prove the main result of this section.

\begin{theorem}\label{thm:HSstab}
Let $(g,h_0,h_1)$ be a solution of the Hull-Strominger system \eqref{eq:HSabstract} with balanced class $\mathfrak{b}:= [\|\Omega\|_\omega \omega^{n-1}]$. Consider the associated triple $(\mathcal{Q},\IP{,},D^\mathbf{G})$ as in Proposition \ref{prop:HS-HE} and assume that $(\mathcal{Q},\IP{,})$ admits a harmonic metric $\mathbf{H}$. Then, $(\mathcal{Q},\IP{,},D^{\mathbf{G}})$ is slope $\mathfrak{b}$-polystable.
\end{theorem}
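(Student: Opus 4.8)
The plan is to obtain Theorem~\ref{thm:HSstab} as a direct application of Proposition~\ref{prop:harmHKstab}; the only genuine subtlety is that the Hermitian form $\omega$ of a solution of the Hull-Strominger system is merely conformally balanced, whereas Proposition~\ref{prop:harmHKstab} is phrased for an honest balanced metric. So the first step is to pass to the balanced form
$$
\omega' = \|\Omega\|_\omega^{\frac{1}{n-1}}\,\omega, \qquad g' = \omega'(\cdot,J\cdot),
$$
which satisfies $\omega'^{n-1} = \|\Omega\|_\omega\,\omega^{n-1}$; this is closed by the third equation in \eqref{eq:HSabstract}, so $g'$ is balanced with balanced class precisely $\mathfrak{b}$.

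Next I would verify that the triple $(\mathcal{Q},\IP{,},D^{\mathbf{G}})$ furnished by Proposition~\ref{prop:HS-HE} meets the hypotheses of Proposition~\ref{prop:harmHKstab} relative to $\omega'$: the connection $D^{\mathbf{G}}$ is orthogonal with $(D^{\mathbf{G}})^{0,1} = \dbar_{\mathcal{Q}}$, being the Chern connection of the compatible (possibly indefinite) metric $\mathbf{G}$ (Remark~\ref{rem:Gcompatible}); and the Hermitian-Einstein identity $F_{\mathbf{G}}\wedge\omega^{n-1} = 0$ of Proposition~\ref{prop:HS-HE}, multiplied by the positive function $\|\Omega\|_\omega$, gives $F_{D^{\mathbf{G}}}\wedge\omega'^{n-1} = 0$.

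The main point is then to check that the harmonic metric $\mathbf{H}$ of the hypothesis --- harmonic with respect to $\omega$ in the sense of Definition~\ref{def:harmonic}, i.e.\ satisfying \eqref{eq:Kmomentmap} with the Lee form $\theta_\omega$ of the conformally balanced metric --- is in fact harmonic with respect to the balanced form $\omega'$, whose Lee form vanishes, so that Proposition~\ref{prop:harmHKstab} applies verbatim. For this I would use that the decomposition $D^{\mathbf{G}} = \nabla^{\mathbf{H}} + \Psi$ of \eqref{eq:H-decomposition} depends only on $\mathbf{H}$ and $D^{\mathbf{G}}$, not on any Hermitian form on $X$, and then argue as in the proof of Lemma~\ref{lemma:IJmmapApp}: the $J\Psi$-analogue of the identity \eqref{eq:harmonicvsmmap} (obtained by replacing $\Psi$ with $J\Psi$ and using $J^{2} = -\Id$ on one-forms) reads, schematically,
$$
(\nabla^{\mathbf{H}})^{*}\Psi + i_{\theta_\omega^\sharp}\Psi = c \; * \big((\nabla^{\mathbf{H}} J\Psi)\wedge\omega^{n-1}\big)
$$
for a nonzero constant $c = c(n)$, with $*$ and the adjoint taken for $g$. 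Since $*$ is an isomorphism, \eqref{eq:Kmomentmap} is therefore equivalent to $(\nabla^{\mathbf{H}} J\Psi)\wedge\omega^{n-1} = 0$, a condition invariant under conformal rescaling of $\omega$, hence to $(\nabla^{\mathbf{H}} J\Psi)\wedge\omega'^{n-1} = 0$; substituting this back into the same identity written for $g'$, whose Lee term drops out, yields $(\nabla^{\mathbf{H}})^{*_{g'}}\Psi = 0$, i.e.\ $\mathbf{H}$ is harmonic for $(\mathcal{Q},\IP{,},D^{\mathbf{G}})$ with respect to $\omega'$. Proposition~\ref{prop:harmHKstab} applied with the balanced metric $g'$ then gives that $(\mathcal{Q},\IP{,},D^{\mathbf{G}})$ is slope $\mathfrak{b}$-polystable, and since the notion in Definition~\ref{d:HKstab} depends only on the class $\mathfrak{b}$ and on $D^{\mathbf{G}}$, this is exactly the assertion.

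I expect the only real obstacle to be the conformal bookkeeping of the previous paragraph --- reconciling the Lee-form formulation of Definition~\ref{def:harmonic} with the balanced moment-map equation $\mu_{\mathbf{K}}(D) = 0$ underlying Proposition~\ref{prop:harmHKstab} --- and in particular pinning down the precise $J\Psi$-analogue of \eqref{eq:harmonicvsmmap} (the constant $c$, and the behaviour of the contraction term under $J$), which is routine but the one place where a sign must be gotten right. Everything else is a direct invocation of Propositions~\ref{prop:HS-HE} and~\ref{prop:harmHKstab}.
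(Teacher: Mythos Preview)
Your proposal is correct and follows exactly the paper's own argument: pass to the balanced metric $g' = \|\Omega\|_\omega^{1/(n-1)} g$, observe via Lemma~\ref{lemma:IJmmapApp} that the harmonic condition \eqref{eq:Kmomentmap} is equivalent to the conformally invariant equation $(\nabla^{\mathbf{H}} J\Psi)\wedge\omega^{n-1}=0$ and hence that $\mathbf{H}$ remains harmonic for $g'$, and then invoke Propositions~\ref{prop:HS-HE} and~\ref{prop:harmHKstab}. The ``$J\Psi$-analogue'' of \eqref{eq:harmonicvsmmap} that you worry about is obtained simply by substituting $J\Psi$ for $\Psi$ in \eqref{eq:harmonicvsmmap} and using $J^2=-\Id$ together with $i_{J\theta_\omega^\sharp}(J\Psi)=-i_{\theta_\omega^\sharp}\Psi$, yielding $c=-\tfrac{1}{(n-1)!}$; so there is no hidden difficulty.
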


\begin{proof}
Observe that the $\mathbf{H}$ is also harmonic for $g' = \|\Omega\|_\omega^{\tfrac{1}{n-1}} g$ (see Lemma \ref{lemma:IJmmapApp}), which is balanced $d \omega'^{n-1} = 0$. Consequently, the result follows as a direct consequence of Proposition \ref{prop:HS-HE} and Proposition \ref{prop:harmHKstab}.
\end{proof}

\subsection{Non-holomorphic Higgs fields}\label{sec:higgsnon}

We establish next a comparison between the equations in Lemma \ref{lemma:IJKmmapHiggs} and the \emph{Hitchin's Equations} in the theory of Higgs bundles \cite{Hitchin1987}. The main qualitative difference between these quations is that the Higgs field $\phi$ in our picture is very often not holomorphic, as we can see from the following result.

\begin{lemma}\label{lem:nonholHiggs}

Let $X$ be a compact K\"ahler manifold endowed with a holomorphic volume form $\Omega$. Let $V_0$ and $V_1$ be holomorphic vector bundles over $X$ satisfying \eqref{eq:c1c2abstract}. Let $(g,h_0,h_1)$ be a solution of the Hull-Strominger system \eqref{eq:HSabstract} and consider the associated triple $(\mathcal{Q},\IP{,},D^\mathbf{G})$ as in Proposition \ref{prop:HS-HE}. Let $\mathbf{H}$ be compatible Hermitian metric on $(\mathcal{Q},\IP{,})$ such that the associated Higgs field in \eqref{eq:C-decomposition} satisfies
$$
\dbar_{\mathcal{Q}} \phi \wedge \omega^{n-1}  = 0.
$$
Assume that $\mathfrak{b}:= [\|\Omega\|_\omega \omega^{n-1}]$ is a $(n-1)^{\mathrm{th}}$-power of a K\"ahler class. Then, $g$ is K\"ahler and $h_0$ and $h_1$ are flat.
\end{lemma}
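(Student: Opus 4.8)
The plan is to use the hypothesis on the Higgs field to upgrade the positive definite metric $\mathbf{H}$ to a genuine Hermitian--Einstein metric on the orthogonal bundle $(\mathcal{Q},\IP{,})$, and then to feed this into the no-go result already proved for slope polystable orthogonal bundles. First I would observe that, since $\mathbf{G}$ is (indefinite) Hermitian--Einstein by Proposition~\ref{prop:HS-HE} and $(D^{\mathbf{G}})^{0,1}=\dbar_{\mathcal{Q}}$, the triple $(\mathcal{Q},\IP{,},D^{\mathbf{G}})$ together with the compatible metric $\mathbf{H}$ satisfies the hypotheses of Lemma~\ref{lemma:IJKmmapHiggs}; hence the pair $(D^{\mathbf{H}},\phi)$ in the decomposition \eqref{eq:C-decomposition} obeys the three equations \eqref{eq:IJmomentmapHiggs}.

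The first real step is then a one-line algebraic manipulation: adding the first equation of \eqref{eq:IJmomentmapHiggs} to one half of the second, the $\partial^{\mathbf{H}}\phi^{*_{\mathbf{H}}}$ terms cancel and one is left with $(F_{\mathbf{H}}+\dbar_{\mathcal{Q}}\phi)\wedge\omega^{n-1}=0$. Invoking the hypothesis $\dbar_{\mathcal{Q}}\phi\wedge\omega^{n-1}=0$ yields
$$F_{\mathbf{H}}\wedge\omega^{n-1}=0,$$
so that $\mathbf{H}$ is a positive definite compatible Hermitian metric on $(\mathcal{Q},\IP{,})$ solving the Hermitian--Einstein equation with respect to $\omega$.

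Next I would pass to a balanced representative of the class $\mathfrak{b}$. Since $(g,h_0,h_1)$ solves the Hull--Strominger system, $\omega$ is conformally balanced, so $\omega'=\|\Omega\|_\omega^{1/(n-1)}\omega$ is a balanced Hermitian metric with $[\omega'^{\,n-1}]=\mathfrak{b}$, and $F_{\mathbf{H}}\wedge\omega'^{\,n-1}=\|\Omega\|_\omega F_{\mathbf{H}}\wedge\omega^{n-1}=0$. By Theorem~\ref{t:DUY} this forces $(\mathcal{Q},\IP{,})$ to be slope $\mathfrak{b}$-polystable. Since by hypothesis $\mathfrak{b}$ is the $(n-1)^{\mathrm{th}}$-power of a K\"ahler class, Proposition~\ref{prop:HSnogo} now applies verbatim and gives that $g$ is K\"ahler and $h_0,h_1$ are flat.

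The argument is essentially a corollary of the machinery already set up, so I do not anticipate a genuine obstacle; the only piece of real content is the cancellation in the first step, which converts the ``almost holomorphicity'' $\dbar_{\mathcal{Q}}\phi\wedge\omega^{n-1}=0$ of the Higgs field into the honest equation $F_{\mathbf{H}}\wedge\omega^{n-1}=0$. If one prefers not to cite Proposition~\ref{prop:HSnogo}, one can instead note that each $h_j$ solves $F_{h_j}\wedge\omega'^{\,n-1}=0$, hence, writing $\mathfrak{b}=[\omega_K]^{n-1}$ for a K\"ahler form $\omega_K$ and observing that $\mu_{\mathfrak{b}}$ coincides with the K\"ahler slope $\mu_{[\omega_K]}$, each $V_j$ is $[\omega_K]$-polystable, so that the hypotheses of Lemma~\ref{lem:HSnogo} are met and its conclusion applies with $[\omega_j]=[\omega_K]$.
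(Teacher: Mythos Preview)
Your proof is correct and takes essentially the same approach as the paper: reduce to $F_{\mathbf{H}}\wedge\omega^{n-1}=0$, apply Theorem~\ref{t:DUY} to get slope $\mathfrak{b}$-polystability, then invoke Proposition~\ref{prop:HSnogo}. The paper reaches the key identity slightly more directly, observing from \eqref{eq:C-decomposition} that $F_{\mathbf{G}}=F_{\mathbf{H}}+\dbar_{\mathcal{Q}}\phi$ as $(1,1)$-forms (since $\phi$ is of type $(1,0)$ and both Chern curvatures are $(1,1)$) and then using $F_{\mathbf{G}}\wedge\omega^{n-1}=0$, rather than combining the first two moment map equations of Lemma~\ref{lemma:IJKmmapHiggs}.
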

\begin{proof}
By \eqref{eq:C-decomposition}, we have that
$$
F_{\mathbf{G}} \wedge \omega^{n-1} = (F_{\mathbf{H}} + \dbar_{\mathcal{Q}} \phi) \wedge \omega^{n-1}=0.
$$
By Theorem \ref{t:DUY}, $\dbar_{\mathcal{Q}} \phi \wedge \omega^{n-1}  = 0$ implies that $(\mathcal{Q},\IP{,})$ is slope $\mathfrak{b}$-polystable and hence the statement follows from Proposition \ref{prop:HSnogo}.
\end{proof}

\begin{remark}\label{rem:connectionvsbracket}
In order to relate our stability condition to a practical obstruction to the existence of solutions to \eqref{eq:HSabstract}, it seems necessary to establish a more clear relation between the Dorfman bracket $[,]$ on $\mathcal{Q}$ (see Remark \ref{rem:bracket}) and the orthogonal connection $D^\mathbf{G}$, in a way that the slope inequality is formulated more naturally in terms of the triple $(\mathcal{Q},\IP{,},[,])$. 
\end{remark}

To finish this section, we provide an explicit formula for the Higgs field of the Hermitian metric given in \eqref{eq:H}. We will apply this formula to provide a non-K\"ahler example of non-holomorphic Higgs field associated to a harmonic metric in Section \ref{subsec:HiggsIwa}. The proof follows from tedious but straightforward calculations, and is ommited. 

\begin{lemma}\label{l:exphiggs}
Let $(X,\Omega,V_0,V_1)$ be as in the statement of Proposition \ref{prop:HS-HE}. Let $(g,h_0,h_1)$ be a solution of the Hull-Strominger system \eqref{eq:HSabstract} with $\alpha > 0$ and consider the associated triple $(\mathcal{Q},\IP{,},D^\mathbf{G})$ and the (positive) hermitian metric $\mathbf{H}$ defined in \eqref{eq:H}. Then, the Higgs field $\phi$ corresponding to  $\mathbf{H}$ via the decomposition \eqref{eq:C-decomposition}
is given by
$$
\phi=\left(\begin{array}{c c c}
0 & 2\alpha g^{-1}\mathrm{tr}_{V_0}(\mathbb{F}^{1,0}_{h_0}\cdot) & 0\\
-2\mathbb{F}_{h_0}^{1,0} & 0 & 0\\
0 & 0 & 0
\end{array}\right),
$$
where we are using the identification $\mathcal{Q}=T_\mathbb{C}\oplus \mathrm{End}\ V_0\oplus \mathrm{End}\ V_1$, and where 
\begin{align*}
i_{V^{1,0}}\mathbb{F}^{1,0}_{h_0}(W)=F_{h_0}(V^{1,0},W),\\
i_{V^{1,0}}g^{-1}\mathrm{tr}_{V_0}(\mathbb{F}^{1,0}_{h_0}r_0)=g^{-1}(\mathrm{tr}_{V_0}(i_{V^{1,0}}F_{h_0} r_0)).
\end{align*}
Moreover
\begin{align*}
\tiny
i_{W^{0,1}}i_{V^{1,0}}(\overline{\partial}_\mathcal{Q}\phi)\left(\begin{array}{c}
X\\
r_0\\
r_1
\end{array}\right)
\normalsize
=  \tiny\left(\begin{array}{c}
\begin{array}{c}
2\alpha g^{-1}\big(\mathrm{tr}_{V_0}(\overline{\partial}^{\nabla^-,V_0}\mathbb{F}_{h_0}^{1,0}(V^{1,0},W^{0,1})\cdot r_0\\
\hspace{6mm}+i_{V^{1,0}}F_{h_0}F_{h_0}(W^{0,1},X))-i_{W^{0,1}}F_{h_0}F_{h_0}(V^{1,0},X))\big)
\end{array}\\
\\
\begin{array}{c}
i_{W^{0,1}}i_{V^{1,0}}(-2\overline{\partial}^{\nabla^-,V_0}\mathbb{F}_{h_0}^{1,0}(X^{0,1})-2\alpha i_{e_i}F_{h_0}\wedge \mathrm{tr}_{V_0}(i_{e_i}F_{h_0}\cdot r_0))\\
\hspace{-31mm}+2\alpha F_{h_0}(e_i,V^{1,0}) \mathrm{tr}_{V_1}(F_{h_1}(e_i,W^{0,1})\cdot r_1))
\end{array}\\
\\
2\alpha F_{h_1}(e_i,W^{0,1})\mathrm{tr}_{V_0}(F_{h_0}(e_i,V^{1,0})r_0)\normalsize
\end{array}\right)
\end{align*}
where $\{e_i\}$ is a $g$-orthonormal frame, and $\overline{\partial}^{\nabla^-,V_0}$ stands for the Dolbeault operator induced in $\Omega^{1,0}(\mathrm{Hom}(T^{0,1},\mathrm{End}\ E_0))$ by $(\nabla^-_{\mathbb{C}})^{0,1}$ and $\overline{\partial}^{V_0}$.

\end{lemma}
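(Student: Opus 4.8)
The plan is to deduce both formulae from the explicit pair $(\nabla^{\mathbf{H}},\Psi)$ of Lemma \ref{lemma:H-decompositionexp}, using the relation between the decompositions \eqref{eq:H-decomposition} and \eqref{eq:C-decomposition}. Recall from the proof of Lemma \ref{lemma:IJKmmapHiggs} that $\Psi = \tfrac{1}{2}(\phi + \phi^{*_\mathbf{H}})$ with $\phi \in \Omega^{1,0}(\End \mathcal{Q})$, so $\phi^{*_\mathbf{H}} \in \Omega^{0,1}(\End \mathcal{Q})$ and, comparing bidegrees, $\phi = 2\,\Psi^{1,0}$. First I would substitute the block expression for $\Psi$ from Lemma \ref{lemma:H-decompositionexp} and take the $(1,0)$-component of each entry. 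Since $h_0$ and $h_1$ are Chern connections on holomorphic bundles (and Hermitian--Yang--Mills), their curvatures $F_{h_j}$ are of type $(1,1)$; hence the $(1,0)$-truncation of the block $\mathbb{F}_{h_0}$ is precisely the operator $\mathbb{F}_{h_0}^{1,0}$ of the statement, $(i_{V^{1,0}}\mathbb{F}_{h_0}^{1,0})(W) = F_{h_0}(V^{1,0},W)$ (supported on $W^{0,1}$), and similarly for its $\mathbf{G}$-adjoint $\mathbb{F}_{h_0}^\dagger$. This yields the stated matrix form of $\phi$, together with the definitions of $\mathbb{F}_{h_0}^{1,0}$ and $g^{-1}\tr_{V_0}(\mathbb{F}_{h_0}^{1,0}\cdot)$.

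For the second formula, the cleanest route is to note that, $\phi$ being of pure type $(1,0)$, the only bidegree-$(1,1)$ contribution to $F_{\mathbf{G}} = (D^{\mathbf{H}} + \phi)^2 = F_{\mathbf{H}} + D^{\mathbf{H}}\phi + \phi\wedge\phi$ is $\dbar_{\mathcal{Q}}\phi$: indeed $F_{\mathbf{G}}$ and $F_{\mathbf{H}}$ are of type $(1,1)$ (as Chern curvatures of the, possibly indefinite, metrics $\mathbf{G}$ and $\mathbf{H}$ on the holomorphic bundle $(\mathcal{Q},\dbar^{\mathcal{Q}})$), while $\partial^{\mathbf{H}}\phi$ and $\phi\wedge\phi$ are of type $(2,0)$. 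Hence
\[
\dbar_{\mathcal{Q}}\phi \;=\; F_{\mathbf{G}} - F_{\mathbf{H}}.
\]
Then I would compute $F_{\mathbf{G}}$ from the explicit Chern connection \eqref{eq:DGexp} and $F_{\mathbf{H}}$ from the block form of $D^{\mathbf{H}} = \nabla^{\mathbf{H}} - \Psi^{1,0} + \Psi^{0,1}$ supplied by Lemma \ref{lemma:H-decompositionexp}, contract with $V^{1,0}$ and $W^{0,1}$, and simplify. The simplification uses the Bianchi identity $d^{h_j}F_{h_j}=0$ (whose $(1,2)$ and $(2,1)$ parts give $\dbar^{V_j}F_{h_j}=0$ and $\partial^{h_j}F_{h_j}=0$ separately) together with the structure equations relating $\nabla^-$ to the Levi--Civita connection and the torsion $d^c\omega$; collecting the diagonal, off-diagonal and cross contributions produces the stated expression, in which $\overline{\partial}^{\nabla^-,V_0}$ is the Dolbeault operator on $\Omega^{1,0}(\Hom(T^{0,1},\End V_0))$ induced by $(\nabla^-_{\CC})^{0,1}$ and $\dbar^{V_0}$.

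The main obstacle is organizational rather than conceptual: the Dolbeault operator $\dbar^{\mathcal{Q}}$ of \eqref{eq:DolQ} has several off-diagonal pieces, namely $i_V F_\theta^{1,1}$, $-i_V(2i\partial\omega)$ and $2\IP{F_\theta^{1,1},r}$, which couple the three summands $T^{1,0}$, $\ad P = \End V_0 \oplus \End V_1$ and $T^*_{1,0}$. Carrying these through to the induced operator on $\End\mathcal{Q}$, while keeping track of the (anti)symmetrizations over the $g$-orthonormal frame $\{e_i\}$ and of the signs coming from the difference between $\mathbf{H} = \operatorname{diag}(g,-\alpha\tr_{V_0},-\alpha\tr_{V_1})$ and $\mathbf{G} = \operatorname{diag}(g,\alpha\tr_{V_0},-\alpha\tr_{V_1})$, is what makes the computation lengthy; it is also precisely what produces the genuinely off-diagonal cross terms such as $F_{h_0}(e_i,V^{1,0})\,\tr_{V_1}(F_{h_1}(e_i,W^{0,1})\cdot r_1)$ in the middle component and the $F_{h_1}$-term in the third. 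Beyond Lemma \ref{lemma:H-decompositionexp}, the Bianchi identities, and careful bookkeeping, no further input is needed, which is why the details are omitted.
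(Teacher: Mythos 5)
Your proposal is correct and amounts to the same direct verification that the paper declares ``tedious but straightforward'' and omits. Both of your reductions are sound: comparing bidegrees in $\Psi=\tfrac12(\phi+\phi^{*_\mathbf{H}})$ (from the proof of Lemma \ref{lemma:IJKmmapHiggs}) gives $\phi=2\Psi^{1,0}$, which combined with the block form of $\Psi$ in Lemma \ref{lemma:H-decompositionexp} immediately yields the stated matrix for $\phi$, with the $(1,0)$-truncations of $\mathbb{F}_{h_0}$ and of its $\mathbf{G}$-adjoint as claimed; and since $F_\mathbf{G}$ and $F_\mathbf{H}$ are Chern curvatures of (possibly indefinite, respectively definite) metrics compatible with $\dbar_\mathcal{Q}$, hence of type $(1,1)$, while $\partial^\mathbf{H}\phi+\phi\wedge\phi$ is of type $(2,0)$, the identity $\dbar_\mathcal{Q}\phi=F_\mathbf{G}-F_\mathbf{H}$ holds --- this is precisely the relation the paper itself uses in the proof of Lemma \ref{lem:nonholHiggs}, and its $(2,0)$-part recovers the third equation of \eqref{eq:IJmomentmapHiggs}, a good consistency check. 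The only part you do not execute is the entry-by-entry evaluation producing the displayed formula for $\dbar_\mathcal{Q}\phi$ (equivalently, applying the operator induced by \eqref{eq:DolQ} on $\End\mathcal{Q}$ to $\phi$); since you correctly identify the needed inputs --- \eqref{eq:DGexp}, the block form of $D^\mathbf{H}=\nabla^\mathbf{H}-\Psi^{1,0}+\Psi^{0,1}$, and the type-decomposed Bianchi identities $d^{h_j}F_{h_j}=0$ --- this remaining step is exactly the bookkeeping the paper also omits, so there is no gap in the method.
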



\section{Examples}\label{sec:examples}

\subsection{A family of solutions on the Iwasawa manifold}\label{subsec:Iwasawa}
In Section \ref{sec:higgsstb} we have proved that triples $(\mathcal{Q},\IP{,},D^\mathbf{G})$ associated to solutions of the Hull-Strominger system are polystable in the sense of Definition \ref{d:HKstab}, provided that they admit a harmonic metric (see Definition \ref{def:harmonic}). The aim of this section is to give some examples where one has a positive answer to Question \ref{questionharmonic}.

Consider the \textit{Iwasawa manifold} $X =\Gamma \backslash H_\CC$, given by the quotient of the complex Heisenberg Lie group
$$
H_\CC =\left\{ \left(\begin{array}{c c c}
1 & z_2 & z_3\\
0 & 1 & z_1\\
0 & 0 & 1\\
\end{array}\right) \hspace{1mm} |\hspace{1mm} z_i \in \mathbb{C}\right\}
$$
by the lattice $\Gamma \subset \CC$ of Gaussian matrices (with entries in $\mathbb{Z}[i]$). The manifold $X$ has a natural complex structure $J$, induced by the bi-invariant complex structure on $H_\CC$. Note that there is a holomorphic projection to the standard complex torus
$$
p: X \rightarrow T^4=\mathbb{C}^2/\mathbb{Z}[i]^2
$$
which makes $X$ a holomorphic torus fibration. Let 
$$
\omega_1=dz_1 \hspace{2mm} , \hspace{2mm} \omega_2=dz_2 \hspace{2mm} , \hspace{2mm} \omega_3=dz_3-z_2 dz_1
$$
be $1$-forms on the Lie group $H_\CC$. These forms are lattice invariant and descend to $X$. Moreover, they define a global basis of $T^*_{1,0}$ and satisfy the structure equations
$$d\omega_1=d\omega_2=0 \hspace{3mm}, \hspace{3mm} d\omega_3=\omega_{12}$$
from which all the exterior algebra relations can be derived. The inclusion of the invariant subcomplex
induces isomorphism on all de Rham and the complex Dolbeault, Bott-Chern and Aeppi cohomology groups, (see e.g. \cite{An}).  

For any choice of
$$
(m,n,p)\in \mathbb{Z}^{3}\backslash \{0\}
$$
we consider the following purely imaginary $(1,1)$-form on the base $T^4$
\begin{equation}\label{lbtorus}
F=\pi (m(\omega_{1\overline{1}}-\omega_{2\overline{2}})+n(\omega_{1\overline{2}}+\omega_{2\overline{1}})+ip(\omega_{1\overline{2}}-\omega_{2\overline{1}})).
\end{equation}
Note that $\tfrac{i}{2\pi} F$ has integral periods and hence, by general theory, this is the curvature form of the Chern connection of a holomorphic hermitian line bundle $(\mathcal{L},h)\rightarrow T^4$. In the sequel, we will identify $(\mathcal{L},h)$ and $F = F_h$ with their corresponding pull-backs to $X$ via $p$. 

Fix $(m_0,n_0,p_0), (m_1,n_1,p_1) \in \mathbb{Z}^3 \backslash \{0\}$ and consider the associated holomorphic Hermitian bundles $(\mathcal{L}_j,h_j) \to X$, for $j = 0,1$. Consider the $SU(3)$ structure on $X$ defined by 
\begin{align}\label{Iwasu3}
    \Omega=\omega_{123} \hspace{3mm}, \hspace{3mm} \omega_0=\tfrac{i}{2}(\omega_{1\overline{1}}+\omega_{2\overline{2}}+\omega_{3\overline{3}}). 
\end{align}
Note that $\omega_0$ is a balanced hermitian metric and $\Omega$ is a holomorphic volume form.

\begin{proposition}\label{prop:HSsol}
With the notation above, the triple $(g_0,h_0,h_1)$ is a solution of the Hull-Strominger system \eqref{eq:HSabstract} on $(X,\Omega,\cL_0,\cL_1)$ if and only if
\begin{equation}\label{eq:alphasol}
\alpha=\frac{1}{2\pi^2(m_0^2+n_0^2+p_0^2-m_1^2-n_1^2-p_1^2)}. 
\end{equation}
\end{proposition}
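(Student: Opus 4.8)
The plan is to verify the four equations of \eqref{eq:HSabstract} in turn for $n=3$, using that $V_0=\cL_0$ and $V_1=\cL_1$ are line bundles pulled back from $T^4$, so that $\tr F_{h_j}=F_{h_j}=F_j$ and $\tr F_{h_j}\wedge F_{h_j}=F_j\wedge F_j$. As a preliminary matter, the topological constraint \eqref{eq:c1c2abstract} holds automatically: since each $F_j$ is pulled back from $T^4$, the product $F_j\wedge F_j$ is a constant multiple of $\omega_1\wedge\omega_2\wedge\bar\omega_1\wedge\bar\omega_2$, and the structure equations exhibit this $(2,2)$-form as $dd^c$-exact (indeed, equal to $dd^c\omega_0$, see below); hence $c_1(\cL_j)^2=0$ in $H^{2,2}_{BC}(X,\RR)$ and $ch_2(\cL_0)=ch_2(\cL_1)=0$.

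For the Hermitian--Yang--Mills equations $F_{h_j}\wedge\omega_0^2=0$, I would note that $F_j$ in \eqref{lbtorus} has no $\omega_3\wedge\bar\omega_3$ component and that the coefficients of $\omega_1\wedge\bar\omega_1$ and of $\omega_2\wedge\bar\omega_2$ are $\pi m_j$ and $-\pi m_j$; hence $\Lambda_{\omega_0}F_j=0$, so $F_j$ is a primitive $(1,1)$-form, and a primitive $(1,1)$-form $\beta$ on a complex threefold satisfies $\beta\wedge\omega_0^2=0$. For the conformally balanced equation, $\|\Omega\|_{\omega_0}$ is constant because $\{\omega_1,\omega_2,\omega_3\}$ is a unitary coframe for $g_0$, so it suffices to check $d\omega_0^2=0$; expanding $\omega_0^2$ and applying $d\omega_1=d\omega_2=0$, $d\omega_3=\omega_1\wedge\omega_2$, one sees that every term of $d\omega_0^2$ carries a repeated $\omega_i$ or $\bar\omega_i$ and hence vanishes, as already asserted in the text.

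The substance of the proposition is the anomaly (Bianchi) equation $dd^c\omega_0-\alpha\,F_0\wedge F_0+\alpha\,F_1\wedge F_1=0$. Using the structure equations I would compute $\partial\omega_0=\tfrac{i}{2}\,\omega_1\wedge\omega_2\wedge\bar\omega_3$ and $\bar\partial\omega_0=-\tfrac{i}{2}\,\omega_3\wedge\bar\omega_1\wedge\bar\omega_2$, hence $\partial\bar\partial\omega_0=-\tfrac{i}{2}\,\omega_1\wedge\omega_2\wedge\bar\omega_1\wedge\bar\omega_2$, so that, with the convention $dd^c=2i\,\partial\bar\partial$,
$$
dd^c\omega_0=\omega_1\wedge\omega_2\wedge\bar\omega_1\wedge\bar\omega_2 .
$$
Next, writing $F_j=\pi\big(m_j(\omega_1\wedge\bar\omega_1-\omega_2\wedge\bar\omega_2)+(n_j+ip_j)\,\omega_1\wedge\bar\omega_2+(n_j-ip_j)\,\omega_2\wedge\bar\omega_1\big)$ and using $\omega_i\wedge\omega_i=\bar\omega_i\wedge\bar\omega_i=0$, only the products of $\omega_1\wedge\bar\omega_1$ with $\omega_2\wedge\bar\omega_2$ and of $\omega_1\wedge\bar\omega_2$ with $\omega_2\wedge\bar\omega_1$ survive, giving
$$
F_j\wedge F_j=2\pi^2\big(m_j^2+n_j^2+p_j^2\big)\,\omega_1\wedge\omega_2\wedge\bar\omega_1\wedge\bar\omega_2 .
$$
Substituting these identities into the anomaly equation and dividing by the nowhere-vanishing form $\omega_1\wedge\omega_2\wedge\bar\omega_1\wedge\bar\omega_2$ reduces it to the scalar relation $1-2\pi^2\alpha(m_0^2+n_0^2+p_0^2)+2\pi^2\alpha(m_1^2+n_1^2+p_1^2)=0$, which is precisely \eqref{eq:alphasol}. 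Since the first three equations hold for every choice of integer triples, this yields the claimed "if and only if".

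I do not expect a conceptual obstacle: the entire argument takes place on the finite-dimensional invariant subcomplex of $X$. The one point requiring genuine care is bookkeeping — pinning down the normalization of $d^c$ so that the overall constant in \eqref{eq:alphasol} is exactly $1/(2\pi^2(\cdots))$, and tracking the exterior-algebra signs on the Iwasawa manifold so that $dd^c\omega_0$ and each $F_j\wedge F_j$ are expressed as honest multiples of one fixed nonzero $(2,2)$-form. (It is implicit in the statement that $m_0^2+n_0^2+p_0^2-m_1^2-n_1^2-p_1^2\neq 0$; when this vanishes the anomaly equation becomes $1=0$ and no solution exists, consistently with the equivalence.)
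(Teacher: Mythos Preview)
Your proof is correct and follows essentially the same route as the paper's: verify the Hermitian--Yang--Mills and conformally balanced equations directly from the invariant structure, then compute $dd^c\omega_0=\omega_{12\overline{12}}$ and $F_{h_j}^2=2\pi^2(m_j^2+n_j^2+p_j^2)\,\omega_{12\overline{12}}$ to reduce the Bianchi identity to the scalar relation \eqref{eq:alphasol}. You in fact supply more detail than the paper (the explicit primitivity check, the $\partial\bar\partial$ computation, and the verification of \eqref{eq:c1c2abstract}), but the strategy and the key computations are the same.
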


\begin{proof}
The first two equations of the system follow from the fact that $\cL_0$ and $\cL_1$ have degree zero combined with the fact that $F_{h_0}$ and $F_{h_1}$ are induced by left-invariant forms. The conformally balanced equation follows from $d\omega_0^2 = 0$ and the fact that $\|\Omega\|_{\omega_0}$ is constant. Finally,
\begin{equation*}
\begin{split}
dd^c \omega_0 & = \omega_{12\overline{12}} ,\\
F_{h_0}^2 & =  2\pi^2(m_1^2+n_1^2+p_1^2)\omega_{12\overline{12}},\\
F_{h_1}^2 & = 2\pi^2(m_2^2+n_2^2+p_2^2)\omega_{12\overline{12}}.
\end{split}    
\end{equation*}
and hence the Bianchi identity, given by the last equation in \eqref{eq:HSabstract}, is equivalent to \eqref{eq:alphasol}.
\end{proof}

\subsection{Existence of harmonic metrics}\label{subsec:exhar}

Recall that there is a fibration structure $p \colon X \to T^4$ and that $\cL_i$ are pull-back from the base.  Let $\underline{P}$ be the smooth complex principal bundle underlying the bundle of split frames of $\mathcal{L}_0 \oplus \mathcal{L}_1$. 
The set of holomorphic structures on $\underline{P}$ is a torsor for 
$$
B=\mathrm{Pic}^0_X\times \mathrm{Pic}^0_X,
$$
which is a complex $4$-dimensional Abelian variety, since $H^{0,1}_{\dbar}(X) \cong \CC^2$ (see \cite{An}). We will denote by $P_x$ the holomorphic bundle associated to $x\in B$. More precisely, we will identify $P_x$ with the holomorphic bundle of split frames of a direct sum of line bundles $\mathcal{L}_0^x \oplus \mathcal{L}_1^x$, where $\mathcal{L}_0^0 \oplus \mathcal{L}_1^0 = \mathcal{L}_0 \oplus \mathcal{L}_1$.

To state the main result of this section, for each $x \in B$ we need to consider holomorphic orthogonal bundles $(\mathcal{Q}_x,\IP{,})$ such that $\mathcal{Q}_x$ is a holomorphic extension of the holomorphic Atiyah algebroid $A_{P_x}$ of $P_x$ by the holomorphic cotangent bundle (cf. Lemma \ref{lem:HSQ})
\begin{equation}\label{eq:holCoustrx}
0 \longrightarrow T^*_{1,0} \overset{\pi^*}{\longrightarrow} \mathcal{Q}_x \longrightarrow A_{P_x} \longrightarrow 0.
\end{equation}
We are interested in extensions of this form which may arise from solutions of the Hull-Strominger system (see Remark \ref{rem:BC}). By \cite[Remark 2.13]{GFGM}, these are parametrized by the image of the natural map
\begin{equation}\label{eq:partialmap}
\partial \colon H^{1,1}_A(X,\mathbb{R}) \to H^{2,1}_{\dbar}(X),
\end{equation}
where $H^{p,q}_{A}(X)$ are the Aeppli cohomology groups of $X$, defined by
\begin{equation*}
H^{p,q}_{A}(X)=\frac{\mathrm{ker} \hspace{1mm} dd^{c}: \Omega^{p,q}(X,\mathbb{C}) \longrightarrow \Omega^{p+1,q+1}(X,\mathbb{C})}{\mathrm{Im} \hspace{1mm}  \partial \oplus \dbar: \Omega^{p-1,q}(X,\mathbb{C}) \oplus \Omega^{p,q-1}(X,\mathbb{C}) \longrightarrow \Omega^{p,q}(X,\mathbb{C})}
\end{equation*}
and $H^{p,p}_{A}(X,\mathbb{R}) \subset H^{p,p}_{A}(X)$ is the canonical real structure. Observe here that the construction of the Dolbeault operator in Lemma \ref{lem:HSQ} can be modified in the following way: given $[\tau] \in H^{1,1}_A(X,\mathbb{R})$ we can change $2i\partial \omega \to 2i\partial (\omega + \tau)$, which still defines an integrable Dolbeault operator. Observe furthermore that this induces a new holomorphic orthogonal bundle structure on \eqref{eq:Qexpb} with the same pairing.

\begin{proposition}\label{exhar}
Let $(m_i,n_i,p_i)\in \mathbb{Z}^3\backslash\{0\}$, $i=0,1$ such that
\begin{equation}\label{bilinearF}
c_1(\cL_0)\cdot c_1(\cL_1)=0\in H^4_{dR}(T^4,\mathbb{R}).
\end{equation}
We fix the coupling constant $\alpha$ as in \eqref{eq:alphasol}. Then, for any $x\in B$ there exists a holomorphic orthogonal bundle $(\mathcal{Q}_x^0,\IP{,})$ induced by a solution of the Hull-Strominger system on $(X,\Omega,\cL_0^x,\cL_1^x)$ which admits a harmonic metric. Furthermore, for any small deformation $(\mathcal{Q}_x,\IP{,})$ of $(\mathcal{Q}_x^0,\IP{,})$ parametrized by an element in the image of \eqref{eq:partialmap}, there exists a solution to the Hull-Strominger system inducing $(\mathcal{Q}_x,\IP{,})$ and a harmonic metric for this solution.
\end{proposition}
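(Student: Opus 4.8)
The plan is to prove both assertions by working entirely with left--invariant tensors on the nilmanifold $X=\Gamma\backslash H_\CC$: the inclusion of the invariant subcomplex is a quasi--isomorphism for De~Rham, Dolbeault, Bott--Chern and Aeppli cohomology, so every equation involved becomes an algebraic identity among invariant forms that one checks with the structure equations $d\omega_1=d\omega_2=0$, $d\omega_3=\omega_{12}$.

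For the base case, fix $x\in B$. Since $H^{0,1}_{\dbar}(X)\cong\CC^2$ is pulled back from $T^4$, each $\cL_j^x$ is the pull--back of a degree--zero line bundle on the torus, hence carries a Hermitian metric $h_j^x$ whose curvature is the pull--back of the invariant form $F_j$ of \eqref{lbtorus}. The triple $(g_0,h_0^x,h_1^x)$ is then again a solution of \eqref{eq:HSabstract} with the coupling constant \eqref{eq:alphasol}, by the very same computation as in Proposition~\ref{prop:HSsol}, which only uses that the $F_j$ are left--invariant of degree zero. Let $(\mathcal{Q}_x^0,\IP{,})$ be the associated holomorphic orthogonal bundle and $\mathbf{H}$ the positive metric \eqref{eq:H} built from $(g_0,h_0^x,h_1^x)$. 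To prove that $\mathbf{H}$ is harmonic I would invoke Lemma~\ref{lemma:Kmmap}. The first condition $F_{h_0^x}\wedge * d^c\omega_0=0$ holds for bidegree reasons: $d^c\omega_0$ is a combination of $\omega_{12\bar3}$ and $\omega_{3\bar1\bar2}$, so its Hodge dual is a combination of $3$--forms whose index sets always overlap the index set of the invariant $(1,1)$--form $F_{h_0^x}$, which involves only $\omega_1,\omega_2,\bar\omega_1,\bar\omega_2$, and the wedge therefore vanishes. For the second condition, since the $\cL_j^x$ are line bundles it reduces to the pointwise identity $\langle F_{h_0^x},F_{h_1^x}\rangle_{g_0}=0$; both curvatures are $g_0$--primitive $(1,1)$--forms supported in the torus directions, so this pairing is a nonzero multiple of the (constant) coefficient of $F_{h_0}\wedge F_{h_1}$, which is proportional to $c_1(\cL_0)\cdot c_1(\cL_1)=0$ by hypothesis \eqref{bilinearF}. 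Hence $\mathbf{H}$ is harmonic, which proves the first statement.

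For the deformation statement, given a small class in the image of $\partial\colon H^{1,1}_A(X,\RR)\to H^{2,1}_{\dbar}(X)$ I would pick a small invariant real $(1,1)$--form $\tau$ with $dd^c\tau=0$ representing a preimage; by the structure equations this forces the $\omega_{3\bar3}$--component of $\tau$ to vanish, and one may take $\tau$ supported on $\omega_{3\bar1},\omega_{3\bar2}$ and their conjugates (the remaining components lie in the kernel of $\partial$). Set $\omega_\tau:=\omega_0+\tau+\sigma$, where $\sigma$ is an invariant real $(1,1)$--form supported on $\omega_{1\bar1},\omega_{2\bar2},\omega_{1\bar2},\omega_{2\bar1}$ chosen so that $F_{h_j^x}\wedge\omega_\tau^{n-1}=0$ for $j=0,1$. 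Such a $\sigma$ exists: all cross--wedges other than $F_{h_j^x}\wedge\omega_0\wedge\sigma$ and $F_{h_j^x}\wedge\tau^2$ vanish by index count, so the condition becomes the pair of real linear equations $F_{h_j^x}\wedge\omega_0\wedge\sigma=-\tfrac12 F_{h_j^x}\wedge\tau^2$ ($j=0,1$) for the volume--form coefficients, and the associated linear map has image spanned by $(m_0,m_1),(n_0,n_1),(p_0,p_1)\in\RR^2$, which is all of $\RR^2$ because \eqref{bilinearF} prevents $\cL_0$ and $\cL_1$ from being parallel in $\ZZ^3$. With $\omega_\tau$ a Hermitian form for $\tau,\sigma$ small, I claim $(g_\tau,h_0^x,h_1^x)$, $g_\tau=\omega_\tau(\cdot,J\cdot)$, solves \eqref{eq:HSabstract}: the Hermitian--Einstein equations hold by the choice of $\sigma$; the conformally balanced equation holds because $d\omega_\tau^{n-1}=0$ (the identities $d(\omega_0\wedge\tau)=0$, $d\tau^2=0$, $d\sigma=0$ all follow from the structure equations) and $\|\Omega\|_{\omega_\tau}$ is constant; and the Bianchi identity holds because $dd^c\omega_\tau=dd^c\omega_0+dd^c\tau+dd^c\sigma=dd^c\omega_0$, using $dd^c\tau=0$ (the Aeppli condition) and $dd^c\sigma=0$ (no $\omega_{3\bar3}$--component). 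This solution induces precisely the prescribed deformation of $(\mathcal{Q}_x^0,\IP{,})$, since in \eqref{eq:DolQ} only the term $-i_V(2i\partial\omega)$ changes and $[\partial\omega_\tau]-[\partial\omega_0]=[\partial\tau]+[\partial\sigma]=[\partial\tau]$ because $\partial\sigma=0$.

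It then remains to show that $(g_\tau,h_0^x,h_1^x)$ admits a harmonic metric, for which I would again try \eqref{eq:H} built from $g_\tau$ and verify the two conditions of Lemma~\ref{lemma:Kmmap} for the balanced form $\omega_\tau$. This is the main obstacle. Passing from $g_0$ to $g_\tau$ changes the Hodge star in a way that couples the fibre direction $\omega_3$ with the base directions (the $\tau$--part of $g_\tau$ is exactly this coupling), so neither $F_{h_0^x}\wedge *_{g_\tau}d^c\omega_\tau=0$ nor $\langle F_{h_0^x},F_{h_1^x}\rangle_{g_\tau}=0$ is immediate; the harmonicity defects vanish at $\tau=\sigma=0$ and are governed by the quadratic--in--$\tau$ correction $\sigma$. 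The plan is to compute $*_{g_\tau}$ explicitly on the relevant invariant forms (still a finite calculation) and to absorb these defects using the residual freedom in the choice of $\sigma$ (the linear map determining $\sigma$ from the Hermitian--Einstein constraints has nontrivial kernel), enlarging the ansatz for $\omega_\tau$ if necessary and, in the worst case, closing the argument by an implicit function theorem at the base solution. Establishing this compatibility of the harmonicity equations with the deformed Hull--Strominger solution is the crux; the rest is a quasi--isomorphism statement about the invariant subcomplex together with bidegree bookkeeping via the structure equations.
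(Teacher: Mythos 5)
Your Step 1 is essentially the paper's: invariance of the representatives forces $F_{h_i^x}=F_{h_i}$, so Proposition \ref{prop:HSsol} gives the solution for every $x\in B$ with the coupling constant \eqref{eq:alphasol}, and Lemma \ref{lemma:Kmmap} reduces harmonicity of the metric \eqref{eq:H} to $F_{h_0}\wedge *\,d^c\omega_0=0$ (automatic, since $F_{h_0}$ lives in the base directions) together with the pointwise orthogonality of $F_{h_0}$ and $F_{h_1}$, which is exactly \eqref{bilinearF}. This half is complete and agrees with the paper. You also identify the correct deformation directions, the classes of $\omega_{1\bar 3},\omega_{3\bar 1},\omega_{2\bar 3},\omega_{3\bar 2}$ spanning $H^{1,1}_A(X,\RR)/\ker\partial$, and correctly observe that a base-supported correction does not change the induced Dolbeault operator.

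The genuine gap is in the second assertion, and you name it yourself: the Proposition requires, for every small deformation, a solution \emph{and a harmonic metric for that solution}, and your argument stops exactly at the harmonicity. What you offer (compute $*_{g_\tau}$ explicitly, absorb the defects using the residual freedom in $\sigma$, ``in the worst case'' an implicit function theorem) is a programme, not a proof; in particular a continuity or implicit-function argument is not obviously available, since harmonicity \eqref{eq:Kmomentmap} is an exact equation rather than an open condition, and you have not specified the space of compatible metrics, the linearization, or why it would be surjective at $\tau=0$. The paper closes precisely this point by a direct invariant computation, with no correction term and no perturbation theory: it keeps $(h_0,h_1)$ fixed, takes the deformed Hermitian form to be $\omega_0+\tau$ with $\tau$ spanned by $\tau_1,\dots,\tau_4$, and shows that the metric $\mathbf{H}_\tau$ of \eqref{eq:exharm} is harmonic \emph{exactly} for all small $\tau$: by Lemma \ref{lemma:Kmmap} the two conditions become $F_{h_0}\wedge *_{\omega_0+\tau}\,d^c(\omega_0+\tau)=0$, which is a finite algebraic check on invariant forms, and the $F_{h_0}$--$F_{h_1}$ orthogonality, which is $\tau$-independent and again given by \eqref{bilinearF}. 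So the ``crux'' you flag is exactly the content the paper supplies, and without it your proof of the deformation statement is incomplete. A secondary deviation: your ansatz $\omega_\tau=\omega_0+\tau+\sigma$ (introduced to handle the quadratic term $F_{h_j}\wedge\tau^2$ in the Hermitian--Einstein equations) departs from the paper's $\omega_0+\tau$ with unchanged $(h_0,h_1)$; whatever its merits, introducing $\sigma$ further modifies the Hodge star entering \eqref{eq:Kmomentmap}, so it enlarges rather than removes the verification you have left open, and the harmonic metric would then have to be checked for the $\sigma$-corrected metric as well.
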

\begin{proof}
We proceed in steps. Firstly, we prove the result for a single holomorphic orthogonal bundle as in the statement.
Then, we check that this construction is stable with respect to deformations. The holomorphic pairing will remain constant along the family.

\textbf{Step 1.} Let $x = (x_0,x_1) \in B$ and choose lifts $\tilde x_i \in H^{0,1}_{\overline{\partial}}(X)$. We denote by $\cL_i'$ the holomorphic line bundle corresponding to $\tilde x_i$. The Chern connections of $h_i$ on $\mathcal{L}_i$ and $\mathcal{L}_i'$ are related by
$$
\theta'_i =\theta_i + a_i, \; i=0,1
$$
where $a_i^{0,1}\in \Omega^{0,1}_{\mathrm{inv}}$ is an invariant form representative of $\tilde x_i$. Observe that
$$
F_{h_i'}=F_{h_i}+da_i=F_{h_i}
$$
since $F_{h_i'}, F_{h_i}\in \Omega^{1,1}$ and $da_i\in \Omega_{\mathrm{inv}}^{2,0}\oplus \Omega_{\mathrm{inv}}^{0,2}$ for $a_i$ invariant. From this, $(\omega_0,\theta_0',\theta_1')$ is a solution of the Bianchi identity, and we consider the associated Bott-Chern algebroid $\mathcal{Q}^0_x$. Then, $\mathcal{Q}^0_x$ admits a solution to the Hull-Strominger system $(g_0,h_0,h_1)$. Furthermore, we obtain a harmonic metric on $\mathcal{Q}^0_x$
$$
\mathbf{H}_0=\left(\begin{array}{c c c}
g_0 & 0 & 0\\
0 & -\alpha & 0\\
0 & 0 & -\alpha
\end{array}\right)
$$
under the topological constraints \eqref{bilinearF} in the statement. To see this, by Lemma \ref{lemma:Kmmap}, we must have
$$
F_{h_0}\wedge * d^c\omega_0=0 \hspace{2mm} , \hspace{2mm} \sum_{i,j} F_{h_0}(e^0_i,e^0_j)F_{h_1}(e^0_i,e^0_j) =0.
$$
for any $\omega_0$-orthonormal basis $\{e_i^0\}$. Using the expression \eqref{lbtorus} for the curvature $F_{h_0}$ combined with 
$$
* d^c\omega_0 = \tfrac{i}{2}(\omega_{12\overline{3}}-\omega_{3\overline{12}})
$$
we get that the first of these equations holds for any value of the integers $(m_0,n_0,p_0)\in \mathbb{Z}^3\backslash \{0\}$. Using that $F_{h_i}$ are Hermitian-Yang-Mills, the second equation may be rewritten as
$$
F_{h_0}\wedge F_{h_1}\wedge \omega_0=0
$$
or, in terms of the parameters,
$$
m_0m_1+n_0n_1+p_0p_1=0.
$$
Finally, using that $F_{h_i}$ are pull-back from the base torus $T^4$, one can easily see that this condition is equivalent to \eqref{bilinearF}.

\textbf{Step 2.} It is easy to check that a basis of $H^{1,1}_A(X,\RR)/\mathrm{ker}\hspace{0.5mm} \partial$ is given by the classes of the real $(1,1)$-forms
$$
\tau_1=\omega_{1\overline{3}}-\omega_{3\overline{1}},\; \tau_2=i(\omega_{1\overline{3}}+\omega_{3\overline{1}}),\; \tau_3=\omega_{2\overline{3}}-\omega_{3\overline{2}},\; \tau_4=i(\omega_{2\overline{3}}+\omega_{3\overline{2}}).
$$
Then, any holomorphic orthogonal bundle in the deformation family of $(\mathcal{Q}^0_x,\IP{,})$ (as in the statement) is isomorphic to $(\mathcal{Q}_x^\tau,\IP{,})$ where $\tau=\sum_{i=1}^4 t_i\tau_i$, $t_i\in \mathbb{R}$, $\mathcal{Q}_x^\tau$ is the holomorphic vector bundle with Dolbeault operator \eqref{eq:DolQ} with $2i\partial \omega$ replaced by $2i\partial (\omega + \tau)$, and $\IP{,}$ is constant given by \eqref{eq:pairing}. Now, one can readily check that, for any small $\tau$, $(g_0 + \tau(\cdot,J\cdot),h_0,h_1)$ is a solution to the Hull-Strominger system which induces $(\mathcal{Q}_x^\tau,\IP{,})$. Furthermore, applying Lemma \ref{lemma:Kmmap} the harmonicity conditions for the metric
\begin{equation}\label{eq:exharm}
\mathbf{H}_\tau =\left(\begin{array}{c c c}
g_0 + \tau(\cdot,J\cdot) & 0 & 0\\
0 & -\alpha & 0\\
0 & 0 & -\alpha
\end{array}\right) 
\end{equation}
follow again from a straightforward calculation which implies
$$
F_{h_0}\wedge *_{\omega_0 + \tau} d^c(\omega_0 + \tau)=0,
$$
combined with condition \eqref{bilinearF}.
\end{proof}

\begin{remark}\label{rem:BC}
More invariantly, in the formalism of Bott-Chern algebroids introduced in \cite{GaRuShTi}, the previous result can be stated as follows: for any $x\in B$ there exists a Bott-Chern algebroid with underlying bundle $P_x$ and fixed Lie algebra bundle determined by the pairing
$$
\IP{(r_0,r_1),(r_0',r_1')} : = - \alpha r_0 r_0' + \alpha r_1 r_1',
$$
such that any small Bott-Chern algebroid deformation admits a solution to the Hull-Strominger system and a harmonic metric for this solution.  
\end{remark}

As an immediate consequence of the previous result and Theorem \ref{thm:HSstab}, we obtain families of examples of holomorphic orthogonal bundles with connection associated to solutions of the Hull-Strominger system which satisfy the stability condition in Definition \ref{d:HKstab}.

\begin{corollary}
The orthogonal vector bundles with connection 
$$
(\mathcal{Q}_x,\langle \cdot,\cdot\rangle, D^{\mathbf{G}})
$$
given by Proposition \ref{exhar} are $\mathfrak{b}$-polystable in the sense of Definition \ref{d:HKstab}, where $\mathfrak{b}$ is the balanced class of the solution which induces $(\mathcal{Q}_x,\langle \cdot,\cdot\rangle)$.
\end{corollary}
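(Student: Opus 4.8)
The plan is to read the statement off from Proposition \ref{exhar} and Theorem \ref{thm:HSstab}, so the task is essentially to match the data produced by the former to the hypotheses demanded by the latter; there is no new analytic input required.

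First I would fix $x \in B$ and a small deformation $(\mathcal{Q}_x,\IP{,})$ as in the statement of Proposition \ref{exhar}, and apply that proposition to obtain: (i) a solution $(g,h_0,h_1)$ of the Hull-Strominger system \eqref{eq:HSabstract} on the corresponding data $(X,\Omega,\mathcal{L}_0^x,\mathcal{L}_1^x)$ whose associated holomorphic orthogonal bundle — built as in Lemma \ref{lem:HSQ}, with the Dolbeault operator \eqref{eq:DolQ} modified by replacing $2i\partial\omega$ with $2i\partial(\omega+\tau)$ — is exactly $(\mathcal{Q}_x,\IP{,})$; and (ii) a compatible Hermitian metric $\mathbf{H}$, namely $\mathbf{H}_\tau$ of \eqref{eq:exharm}, which is harmonic in the sense of Definition \ref{def:harmonic} for this solution. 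At this point I would record that the balanced class of the solution is $\mathfrak{b} = [\|\Omega\|_{\omega_0+\tau}(\omega_0+\tau)^{n-1}] \in H^{n-1,n-1}_{BC}(X,\RR)$ (with $n=3$ for the Iwasawa manifold), which is precisely the class referred to in the statement.

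Next I would feed $(g,h_0,h_1)$ into Proposition \ref{prop:HS-HE} to produce the Hermitian-Einstein metric $\mathbf{G}$ on $(\mathcal{Q}_x,\IP{,})$, whose Chern connection $D^{\mathbf{G}}$ is the orthogonal connection appearing in the statement; by construction $F_{\mathbf{G}}\wedge\omega^{n-1}=0$ and $(D^{\mathbf{G}})^{0,1}=\dbar_{\mathcal{Q}_x}$, so $(\mathcal{Q}_x,\IP{,},D^{\mathbf{G}})$ satisfies the standing hypotheses of Theorem \ref{thm:HSstab}. Since $\mathbf{H}$ is harmonic for this triple, Theorem \ref{thm:HSstab} applies directly and gives that $(\mathcal{Q}_x,\IP{,},D^{\mathbf{G}})$ is slope $\mathfrak{b}$-polystable in the sense of Definition \ref{d:HKstab}. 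As $x \in B$ and the deformation parameter $\tau$ were arbitrary, this establishes the corollary.

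The only point requiring genuine care — and the step I would spell out rather than leave implicit — is the compatibility of conventions between the two inputs: one must verify that $\mathbf{H}_\tau$ in \eqref{eq:exharm} is harmonic precisely for the connection $D^{\mathbf{G}}$ attached to the \emph{same} solution via Proposition \ref{prop:HS-HE} (and not for some other orthogonal connection with the same $(0,1)$-part), and that the rescaling $g' = \|\Omega\|_\omega^{1/(n-1)}g$ used inside the proof of Theorem \ref{thm:HSstab} to pass to a genuinely balanced metric does not disturb harmonicity — which is exactly the conformal invariance of the defining equation recorded in Lemma \ref{lemma:IJmmapApp}. Granting these observations, the conclusion is immediate.
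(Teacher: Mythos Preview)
Your proposal is correct and follows essentially the same route as the paper: Proposition \ref{exhar} supplies the harmonic metric, and then one invokes the ``harmonic metric $\Rightarrow$ polystable'' implication. The only cosmetic difference is that the paper cites Proposition \ref{prop:harmHKstab} directly (legitimate here because $\omega_0$, and hence $\omega_0+\tau$, is already balanced with $\|\Omega\|$ constant, so no conformal rescaling is needed), whereas you route through Theorem \ref{thm:HSstab}, which is just Proposition \ref{prop:harmHKstab} packaged with that rescaling step; the extra care you take in your last paragraph about conformal invariance is therefore already absorbed into the theorem you are citing.
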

\begin{proof}
This is immediate from Proposition \ref{prop:harmHKstab} and Proposition \ref{exhar}. 
\end{proof}

\subsection{Higgs fields on the Iwasawa manifold}\label{subsec:HiggsIwa}

As we discussed in Section \ref{sec:higgsnon}, we cannot expect the Higgs field defined by a harmonic metric $\mathbf{H}$ for the Hull-Strominger system to be holomorphic. Motivated by Lemma \ref{lem:nonholHiggs}, the aim of this section is to provide a non-K\"ahler example which illustrates this phenomenon.

We shall focus on the family of examples constructed in Proposition \ref{exhar}, with Harmonic metrics $\mathbf{H}_\tau$ defined in \eqref{eq:exharm}. The proof follows by application of Lemma \ref{l:exphiggs}.

\begin{proposition}
Let $(g_0+\tau(\cdot,J\cdot),h_0,h_1)$ be the solution of the Hull-Strominger system in $(X,\Omega,\mathcal{L}^x_0,\mathcal{L}^x_1)$ constructed in Proposition \ref{exhar}, for $\alpha$ given by \eqref{eq:alphasol}. Let $\mathbf{H}_\tau$ be the harmonic metric for this solution constructed in the proof of Proposition \ref{exhar}. Then, for any small $\tau$, the Higgs field $\phi$ of $\mathbf{H}_\tau$ satisfies
$$
\overline{\partial}_\mathcal{Q}\phi \neq 0.
$$
\end{proposition}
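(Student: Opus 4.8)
The plan is to use the explicit formula for $\overline{\partial}_{\mathcal{Q}}\phi$ provided by Lemma \ref{l:exphiggs}, applied to the family of solutions $(g_0 + \tau(\cdot,J\cdot), h_0, h_1)$ on the Iwasawa manifold. The key point is that $\overline{\partial}_{\mathcal{Q}}\phi = 0$ would force a very strong vanishing of curvature terms, which is incompatible with the fact that the line bundles $\mathcal{L}_0^x, \mathcal{L}_1^x$ have nonzero curvature. Concretely, the Higgs field in Lemma \ref{l:exphiggs} has its only nonzero blocks built from $\mathbb{F}^{1,0}_{h_0}$, and $\overline{\partial}_{\mathcal{Q}}\phi$ has, in particular, a bottom-row component equal (up to a nonzero constant multiple of $\alpha$) to $F_{h_1}(e_i, W^{0,1})\operatorname{tr}_{V_0}(F_{h_0}(e_i,V^{1,0})r_0)$, as well as a top block involving $\operatorname{tr}_{V_0}(\overline{\partial}^{\nabla^-,V_0}\mathbb{F}_{h_0}^{1,0})$ and quadratic terms in $F_{h_0}$.

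First I would specialize all the data: recall that on $X = \Gamma\backslash H_{\CC}$ the curvatures $F_{h_0}, F_{h_1}$ are the pull-backs from $T^4$ of the invariant $(1,1)$-forms \eqref{lbtorus}, that $\omega_0$ is the standard balanced metric \eqref{Iwasu3}, and that for small $\tau = \sum t_i \tau_i$ the metric perturbation only affects the $\omega_3$-directions. Then I would compute the relevant components of $\overline{\partial}_{\mathcal{Q}}\phi$ using the formula in Lemma \ref{l:exphiggs}. The cleanest route is to exhibit a single nonvanishing component: evaluate the $\mathrm{End}\,V_0$-block of $i_{W^{0,1}}i_{V^{1,0}}(\overline{\partial}_{\mathcal{Q}}\phi)$ on suitable frame vectors. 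The term $-2\overline{\partial}^{\nabla^-,V_0}\mathbb{F}_{h_0}^{1,0}$ measures the failure of $\mathbb{F}_{h_0}^{1,0}$ to be holomorphic with respect to the connection $\nabla^-$, and since $\nabla^-$ has torsion $d^c\omega_0 \neq 0$ on the Iwasawa manifold (indeed $d\omega_3 = \omega_{12} \neq 0$), this term is generically nonzero even though $F_{h_0}$ itself is $\overline{\partial}^{V_0}$-closed. So the strategy is: show that $\overline{\partial}^{\nabla^-,V_0}\mathbb{F}_{h_0}^{1,0}$ picks up a contribution proportional to $F_{h_0}$ contracted against the structure constant of $d\omega_3$, which cannot be cancelled by the other (quadratic, or $F_{h_1}$-coupled) terms because those live in different frame components or are killed by the constraint \eqref{bilinearF}.

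The key steps, in order: (1) write out $\mathbb{F}^{1,0}_{h_0}$ explicitly in the coframe $\omega_1,\omega_2,\omega_3$ using \eqref{lbtorus}, noting that since $F_{h_0}$ is pulled back from $T^4$ it only involves $\omega_1, \omega_2, \overline\omega_1, \overline\omega_2$; (2) compute the action of $\overline{\partial}^{\nabla^-, V_0}$ on this form, using $\nabla^- = \nabla + \tfrac12 g^{-1}d^c\omega_0$ and the structure equation $d\omega_3 = \omega_{12}$, and observe that the covariant derivative in the $\overline\omega_3$-direction of $\mathbb{F}^{1,0}_{h_0}$ produces a term of the form $F_{h_0}(\cdot,\cdot)$ times a nonzero structure constant; (3) check that in the $\mathrm{End}\,V_0$-block of $\overline{\partial}_{\mathcal{Q}}\phi$, evaluated with $W^{0,1}$ in the $\overline\omega_3$-direction and $V^{1,0}$ in an $\omega_1$- or $\omega_2$-direction, this term does not vanish and is not cancelled; (4) verify the other summands in that block — the $F_{h_0}\wedge \operatorname{tr}_{V_0}(F_{h_0}\cdot)$ term and the $F_{h_0}(e_i,\cdot)\operatorname{tr}_{V_1}(F_{h_1}(e_i,\cdot)\cdot)$ term — either vanish for this choice of frame vectors (the latter by the orthogonality \eqref{bilinearF} / $F_{h_0}\wedge F_{h_1}\wedge\omega_0 = 0$, or simply because they involve only $\omega_1,\omega_2$-directions) or cannot cancel the surviving piece; (5) conclude $\overline{\partial}_{\mathcal{Q}}\phi \neq 0$, and note this persists for small $\tau$ by continuity since the leading contribution at $\tau = 0$ is nonzero.

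The main obstacle is the bookkeeping in step (2)–(4): one must be careful to identify the precise frame component in which the torsion-induced term survives and to confirm that none of the other terms in the (rather intimidating) matrix formula of Lemma \ref{l:exphiggs} conspire to cancel it. The conceptual content, however, is transparent — holomorphicity of $\phi$ with respect to $\overline{\partial}_{\mathcal{Q}}$ would, via Lemma \ref{lem:nonholHiggs}, force $g_0$ to be Kähler, and the Iwasawa manifold is not Kähler (equivalently, $dd^c\omega_0 = \omega_{12\overline{12}} \neq 0$), so $\overline{\partial}_{\mathcal{Q}}\phi$ must be nonzero. In fact the slickest proof is simply to invoke Lemma \ref{lem:nonholHiggs} directly: $X$ is not Kähler, yet if $\overline{\partial}_{\mathcal{Q}}\phi \wedge \omega_0^{n-1} = 0$ then Lemma \ref{lem:nonholHiggs} (whose hypothesis that $\mathfrak{b}$ is an $(n-1)$-st power of a Kähler class is vacuously checkable here, or one argues on the Bott-Chern level) would give a contradiction — but since $\mathfrak b$ here is not manifestly a power of a Kähler class, I would instead give the direct computation as the primary argument and mention the Lemma \ref{lem:nonholHiggs} heuristic as motivation.
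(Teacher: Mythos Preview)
Your overall strategy matches the paper's exactly: apply Lemma \ref{l:exphiggs}, reduce to $\tau=0$ by continuity, and exhibit a single nonvanishing matrix entry of $\bar\partial_{\mathcal{Q}}\phi$. The difference is in \emph{which} entry you target. You go after the $(2,1)$ block, i.e.\ the term $-2\,\bar\partial^{\nabla^-,V_0}\mathbb{F}_{h_0}^{1,0}$, and propose to extract a nonzero contribution from the torsion of $\nabla^-$ by taking $W^{0,1}$ along $\bar\omega_3$. The paper instead computes the $(2,3)$ block --- essentially the scalar $(1,1)$-form
\[
2\alpha\, F_{h_0}(e_i,V^{1,0})\,F_{h_1}(e_i,W^{0,1}),
\]
which is precisely the ``bottom-row component'' you mention in passing and then set aside. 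This entry is purely algebraic in the invariant curvatures \eqref{lbtorus}; writing it out gives a $(1,1)$-form in $\omega_{1\bar1},\omega_{1\bar2},\omega_{2\bar1},\omega_{2\bar2}$ with coefficients bilinear in $(m_0,n_0,p_0)$ and $(m_1,n_1,p_1)$, and one checks that its vanishing would force the two integer vectors to be proportional, which together with the orthogonality \eqref{bilinearF} contradicts $(m_i,n_i,p_i)\neq 0$. No connection theory is needed at all.

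Your route is not wrong in spirit, but it carries a real risk you have not addressed. The connection $\nabla^-$ does \emph{not} preserve the splitting $T_{\mathbb C}=T^{1,0}\oplus T^{0,1}$ (only the Bismut connection $\nabla^+$ does), so the meaning of ``$(\nabla^-)^{0,1}$ acting on $\mathrm{Hom}(T^{0,1},\mathrm{End}\,V_0)$'' in Lemma \ref{l:exphiggs} has to be unpacked carefully before you can read off a torsion contribution. Concretely, when $W^{0,1}=\bar\partial_3$ one finds $i_{\bar\partial_3}d^c\omega_0=\tfrac12\omega_{12}$, a $(2,0)$-form, so the torsion correction to $\nabla^-_{\bar\partial_3}$ vanishes on $T^{0,1}$ and only appears through the type-mixing of $\nabla^-$; whether this survives the projection implicit in the lemma's operator requires an honest computation you have not sketched. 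The paper's choice of the $(2,3)$ entry sidesteps all of this. Finally, your closing remark is correct: Lemma \ref{lem:nonholHiggs} is unavailable here because $X$ is not K\"ahler and $\mathfrak b$ is not a power of a K\"ahler class, so the direct computation is indeed required.
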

\begin{proof}
The proof follows by an explicit calculation using Lemma \ref{l:exphiggs}. Notice that, in our examples, both $V_0$ and $V_1$ are line bundles, and therefore $\mathrm{End}\ V_i \cong \mathbb{C}$ canonically. Applying Lemma \ref{l:exphiggs}, we can write $\overline{\partial}_\mathcal{Q}\phi$ in matrix form corresponding to the splitting $\mathcal{Q}=T_\mathbb{C}\oplus \mathbb{C} \oplus \mathbb{C}$, as follows
\begin{align*}
\overline{\partial}_\mathcal{Q}\phi = \left(\begin{array}{c c c}
(\overline{\partial}_\mathcal{Q}\phi)_{11} & (\overline{\partial}_\mathcal{Q}\phi)_{12} & 0\\
(\overline{\partial}_\mathcal{Q}\phi)_{21} & (\overline{\partial}_\mathcal{Q}\phi)_{22} & (\overline{\partial}_\mathcal{Q}\phi)_{23}\\
0 & (\overline{\partial}_\mathcal{Q}\phi)_{32} & 0
\end{array}\right).
\end{align*}
whose components are 
 $(1,1)$-forms with values in $\mathrm{End} \ T_\mathbb{C}$ for $(\overline{\partial}_\mathcal{Q}\phi)_{11}$, in $T_\mathbb{C}$ for $(\overline{\partial}_\mathcal{Q}\phi)_{12}$, in $T^*_\mathbb{C}$ for
 $(\overline{\partial}_\mathcal{Q}\phi)_{21}$, and are scalar for the rest.

By continuity on the parameter $\tau$, it is enough to check that $\overline{\partial}_\mathcal{Q}\phi\neq 0$ for $\tau=0$. It follows from a straightforward computation that:
\begin{align*}
(\overline{\partial}_\mathcal{Q}\phi)_{23}= -4\pi^2\alpha(&(m_0m_1+(n_0+ip_0)(n_1-ip_1))\omega_{1\overline{1}}+\\
+&(m_0(n_1+ip_1)-m_1(n_0+ip_0))\omega_{1\overline{2}}+\\
+&(-m_0(n_1-ip_1)+m_1(n_0-ip_0))\omega_{2\overline{1}}+\\
+&(m_0m_1+(n_0-ip_0)(n_1+ip_1))\omega_{2\overline{2}}).
\end{align*}
Finally, subject to the condition (\ref{bilinearF}) together with $(m_i,n_i,p_i)\in \mathbb{Z}^3\backslash\{0\}$, it is easy to check that $(\overline{\partial}_\mathcal{Q}\phi)_{23}\neq 0$.

\end{proof}

\end{document}